\newtheorem{thm}{Theorem}[section]
\newtheorem{lem}[thm]{Lemma}
\newtheorem{prop}[thm]{Proposition}
\newtheorem{cor}[thm]{Corollary}
\newtheorem{defi}[thm]{Definition}
\newtheorem{rem}[thm]{Remark}
\newtheorem{assumption}[thm]{Assumption}
\newcommand{\ga}{\alpha}
\newcommand{\gb}{\beta}
\newcommand{\eps}{\ensuremath{\varepsilon}}
\renewcommand{\gg}{\gamma}
\newcommand{\gk}{\kappa}
\newcommand{\gl}{\lambda}
\newcommand{\go}{\omega}
\newcommand{\gs}{\sigma}
\newcommand{\gD}{\Delta}
\newcommand{\gG}{\Gamma}
\newcommand{\gO}{\Omega}
\newcommand{\gT}{\Theta}
\newcommand{\cB}{\mathcal{B}}
\newcommand{\cD}{\mathcal{D}}
\newcommand{\cE}{\mathcal{E}}
\newcommand{\cF}{\mathcal{F}}
\newcommand{\cI}{\mathcal{I}}
\newcommand{\cK}{\mathcal{K}}
\newcommand{\cN}{\mathcal{N}}
\newcommand{\cO}{\mathcal{O}}
\newcommand{\cP}{\mathcal{P}}
\newcommand{\cS}{\mathcal{S}}
\newcommand{\cT}{\mathcal{T}}
\newcommand{\cU}{\mathcal{U}}
\newcommand{\cV}{\mathcal{V}}
\newcommand{\cW}{\mathcal{W}}
\newcommand{\cX}{\mathcal{X}}
\newcommand{\bE}{\mathbb{E}}
\newcommand{\bN}{\mathbb{N}}
\newcommand{\bP}{\mathbb{P}}
\newcommand{\bR}{\mathbb{R}}
\newcommand{\bT}{\mathbb{T}}
\newcommand{\bZ}{\mathbb{Z}}
\newcommand{\1}{\mathbf{1}} 
\newcommand*{\lrscript}[5]{{\vphantom{#1}}_{#2}^{#3}{#1}_{#4}^{#5}}
\newcommand{\dualpair}[4]{\ensuremath{\lrscript{\langle}{#1}{}{}{} #3 ,#4 \rangle_{#2}}}
\newcommand{\be}{\begin {equation}}
\newcommand{\ee}{\end  {equation}}
\newcommand{\bee}{\begin {equation*}}
\newcommand{\eee}{\end {equation*}}
\newcommand{\ol}{\overline}
\newcommand{\ul}{\underline}
\newcommand{\floor}[1]{\lfloor #1 \rfloor}
\newcommand{\indi}{\mathds{1}}
\DeclareMathOperator*{\esssup}{ess\,sup}
\DeclareMathOperator*{\essinf}{ess\,inf}
\newcommand{\KL}{{Karhunen-Lo\`{e}ve }}
\begin{document}
	
\title{Numerical analysis for time-dependent advection-diffusion problems with random discontinuous coefficients}
\author{
	Andreas Stein \footnote{University of Stuttgart, Allmandring 5b, 70569 Stuttgart, Germany} \footnote{andreas.stein@mathematik.uni-stuttgart.de}
	\and
	Andrea Barth \addtocounter{footnote}{-1}\footnotemark[\value{footnote}] 
}

\date{\today}

\maketitle

\begin{abstract}
Subsurface flows are commonly modeled by advection-diffusion equations. Insufficient measurements or uncertain material procurement may be accounted for by random coefficients. 
To represent, for example, transitions in heterogeneous media, the parameters of the equation are spatially discontinuous. Specifically, a scenario with coupled advection- and diffusion coefficients that are modeled as sums of continuous random fields and discontinuous jump components are considered. For the numerical approximation of the solution, a sample-adapted, pathwise discretization scheme based on a Finite Element approach is introduced. To stabilize the numerical approximation and accelerate convergence, the discrete space-time grid is chosen with respect to the varying discontinuities in each sample of the coefficients, leading to a stochastic formulation of the Galerkin projection and the Finite Element basis.
\end{abstract}

\section{Introduction}
In this paper we are concerned with the well-posedness of a solution to a time-dependent advection-diffusion equation with discontinuous random coefficients and its numerical discretization. The random coefficient function is modeled by a continuous part and a discontinuous part, inspired by the unique characterization of the L\'evy-Khinchine formula for L\'evy processes. We adopt this idea to spatial domains, meaning we propose jumps occurring on lower-dimensional submanifolds. The numerical discretization method has to account for these discontinuities of the coefficient functions, as otherwise (spatial) convergence rates decline. 

This work is a generalization to the elliptic setting which has drawn attention over the last decades. While many publications focus on numerical methods for continuous stochastic coefficients (see, e.g.,~\cite{ABS13, AN14, BNT07, BTZ04, BT16, BSZ11,  CGST11, CDS11, FST05, HS13, LWZ16, NT09, NTW08a, SG11, G13, GZ12}), the literature on stochastic discontinuous coefficients or stochastic interface problems is sparse (see, e.g.,~\cite{HL13, L15, Z11}). The reasons are twofold: On one hand a Gaussian random field is a well defined mathematical object and its properties are well studied, on the other hand there is no general definition and approximation method for a discontinuous (L\'evy) field. A (centered) Gaussian random field is fully characterized by its covariance operator. Discretization methods range from spectral approximations to Fourier methods (see, e.g.,~\cite{Gr18, LP11, SP07}). 
While we need an approximation for the continuous (Gaussian) part of the coefficient function, drawing samples from different jump intensity measures may also introduce a bias. Our main contribution is therefore, to provide a well-posedness result for a parabolic equation with general jump-diffusion and jump-advection coefficient and provide the analysis of a numerical approximation. Besides the approximation of the coefficient itself, we prove convergence of a pathwise sample-adapted space-time approximation. Naturally, for pathwise sample-adapted schemes, convergence rates are also random. However, in our setting an upper bound on the mean-square error can be derived but sampling has to be adopted accordingly.     

The paper is structured as follows: In Section~\ref{sec:prelim2} we state the problem and show a general existence result for pathwise solutions under mild assumptions on the data. In the following section we define the random coefficient functions and show convergence of approximations in appropriate norms. These approximations are used to develop in Section~\ref{sec:fem2} pathwise sample-adapted discretization schemes for the solution. Our main contribution is a convergence result for this approximation. We close with one- and two-dimensional numerical experiments, that confirm our theoretical findings.

\section[Initial boundary value problems]{Parabolic initial-boundary value problems and their solutions}\label{sec:prelim2}

Let $(\gO,\cF,\bP)$ be a complete probability space, $\bT:=[0,T]$ a time interval for some $T>0$ and $\cD\subset\bR^d,\, d\in\{1,2\}$ be a convex, polygonal domain with piecewise linear boundary.
In this paper we consider the linear, random initial-boundary value problem
\begin{equation}\label{eq:pde}
\begin{split}
\partial_t u(\go,x,t)+[Au](\go,x,t)&=f(\go,x,t)\quad\text{in $\gO\times\cD\times(0,T]$},\\
u(\go,x,0)&=u_0(\go,x)\quad\text{in $\gO\times\cD\times\{0\}$},\\
u(\go,x,t)&=0\quad\text{on $\gO\times\partial\cD\times\bT$},
\end{split}
\end{equation}
where  $f:\gO\times\cD\times\bT\to\bR$ is a random source function and $u_0:\gO\times\cD:\to\bR$ denotes the random initial condition of the partial differential equation (PDE).
Furthermore, $A$ is the second order partial differential operator given by 
\begin{equation}\label{eq:Au}
\begin{split}
[Au](\go,x,t)=-\nabla\cdot  \left(a(\go,x)\nabla u(\go,x,t)\right)+b(\go,x)\cdot\nabla u(\go,x,t)\\
\end{split}
\end{equation}
for $(\go,x,t)\in\gO\times\cD\times\bT$ with
\begin{itemize}
	\item a stochastic jump-diffusion coefficient $a:\gO\times\cD\to\bR$ and
	\item a stochastic jump-advection coefficient $b:\gO\times\cD\to\bR^d$\footnote{We could extend the above model problem by including time-dependent diffusion and/or advection 
		coefficients. If $a$ and $b$ are sufficiently smooth with respect to $t$, i.e. continuously differentiable in $\bT$, the temporal convergence rates in Subsection~\ref{subsec:temp_disc} are not affected. The focus of this article, however, is on the numerical analysis of Problem~\eqref{eq:pde} with coefficients that involve random spatial discontinuities, hence we assume for the sake of simplicity that $a$ and $b$ are time-independent.}.
\end{itemize}

We base the analysis of Problem~\eqref{eq:pde} on the standard Sobolev space $H^k(\cD)$ with the norm 
\begin{equation*}
\|v\|_{H^k(\cD)}:=\Big(\sum_{|\nu|\le k}\int_\cD |D^\nu v(x)|^2dx\Big)^{1/2}\quad\text{for $k\in\bN$},
\end{equation*}
where the $D^\nu=\partial_{x_1}^{\nu_1}\dots\partial_{x_d}^{\nu_d}$ is the mixed partial weak derivative (in space) with respect to the multi-index $\nu\in\bN_0^d$.
The seminorm corresponding to $H^k(\cD)$ is denoted by 
\begin{equation*}
|v|_{H^k(\cD)}:=\Big(\sum_{|\nu|= k}\int_\cD |D^\nu v(x)|^2dx\Big)^{1/2}.
\end{equation*}
The \textit{fractional order Sobolev spaces} $H^s(\cD)$ for $s>0$ are defined by the norm 
\begin{align*}
\|v\|_{H^s(\cD)}:=\|v\|_{H^{\floor s}(\cD)}+|v|_{H^{s-\floor s}(\cD)},\quad
|v|^2_{H^{s-\floor s}(\cD)}:=\int_\cD\int_\cD\frac{|v(x)-v(y)|^2}{|x-y|^{d+2(s-\floor s)}}dxdy,
\end{align*}
where $|\cdot|_{H^{s-\floor s}(\cD)}$ is the  the \emph{Gagliardo seminorm}, see \cite{DGV12}, and $\floor\cdot :\bR\to\bZ,\;s\mapsto\max(k\in\bZ,k\le s)$ is the \textit{floor operator}.
Further, we define $H:=L^2(\cD)$ and denote by $C$ a generic positive constant which may change from one line to another. 
Whenever necessary, the dependence of $C$ on certain parameters is made explicit. 

On the domain $\cD$, the existence of a bounded, linear operator  $\gg:H^s(\cD)\to H^{s-1/2}(\partial\cD)$
with 
\begin{equation*}
\gg:H^s(\cD)\cap C^\infty(\overline\cD)\to H^{s-1/2}(\partial\cD),\quad v\mapsto \gg v=v\vert_{\partial\cD}
\end{equation*}
and
\begin{equation}\label{eq:trace}
\|\gg v\|_{H^{s-1/2}(\partial\cD)}\le C \|v\|_{H^s(\cD)}
\end{equation}
for $s\in (1/2,3/2),\, v\in H^s(\cD)$ is ensured by the trace theorem, see for example~\cite{D96}, where $C=C(s,\cD)>0$ in Ineq.~\eqref{eq:trace} depends on the boundary of $\cD$.
Since we consider homogeneous Dirichlet boundary conditions on $\partial\cD$, we may treat $\gg$ independently of $\go$
and define the suitable solution space $V$ as 
\begin{equation*}
V:=H_0^1(\cD)=\{v\in H^1(\cD)|\;\gg v\equiv0\}, 
\end{equation*}
equipped with the $H^1(\cD)$-norm $ \|v\|_V:= \|v\|_{H^1(\cD)}$. Due to the homogeneous Dirichlet boundary conditions, 
the \emph{Poincar\'e inequality} $\|v\|_H\le C|v|_{H^1(\cD)}$ holds with $C=C(|\cD|)>0$ for all $v\in V$, where $|\cD|$ denotes the area of $\cD$.
Hence, the norms $\|\cdot\|_{H^1(\cD)}$ and $|\cdot|_{H^1(\cD)}$ are equivalent on $V$. 
Furthermore, by Jensen's inequality
\be\label{eq:nabla_norm}
\big(\sum_{i=1}^d|\partial_{x_i}v(x)|\big)^{2}\le 2^{d-1} \sum_{i=1}^d(\partial_{x_i}v(x))^2,\quad x\in\cD,
\ee
and hence $\|\sum_{i=1}^d|\partial_{x_i}v|\|^2_H\le 2^{d-1}|v|^2_{H^1(\cD)}$ for any $v\in V$.
We work on the Gelfand triplet $V\subset H\subset V'=H^{-1}(\cD)$, where $V'$ denotes the topological dual of the vector space $V$.
As the coefficients $a$ and $b$ are given by random functions, suitable solutions $u$ to Problem~\eqref{eq:pde} are in general time-dependent $V$-valued random variables. 
To investigate the integrability of $u$ with respect to $\bT$ and the underlying probability measure $\bP$ on $(\gO,\cF)$, we need to introduce the space of \textit{Bochner-integrable} functions.
\begin{defi}
	Let $(Y,\Sigma,\mu)$ be a $\sigma$-finite and complete measure space, let $(\cX,\|\cdot\|_\cX)$ be a Banach space and define the norm $\|\cdot\|_{L^p(Y;\cX)}$ for a strongly measurable $\cX$-valued function $\varphi:Y\to\cX$ by
	\begin{equation*}
	\|\varphi\|_{L^p(Y;\cX)}:=\begin{cases}
	\Big(\int_Y\|\varphi(y)\|_\cX^p\mu(dy)\Big)^{1/p}\quad\text{for $1\le p<+\infty$}\\
	\esssup\limits_{y\in Y} \|\varphi(y)\|_\cX\quad\text{for $p=+\infty$}
	\end{cases}.
	\end{equation*}
	The corresponding space of Bochner-integrable random variables is given by
	\bee
	L^p(Y;\cX):=\{\varphi:Y\to\cX\text{ is strongly measurable and }\|\varphi\|_{L^p(Y;\cX)}<+\infty\}.
	\eee
	Furthermore, the space of all continuous functions $\varphi:Y\to\cX$ is defined as
	\bee
	C(Y;\cX):=\{\varphi:Y\to\cX\text{ is continuous and }\sup_{y\in Y}\|\varphi(y)\|_\cX<+\infty\}.
	\eee
\end{defi}
We are interested in the two particular cases that 
\begin{itemize}
	\item $(Y,\Sigma,\mu)=(\bT,\cB(\bT),\mu_\bT)$, where $\cB(\bT)$ is the Borel $\gs$-algebra over $\bT$ and $\mu_\bT$ is the Lebesgue-measure on $\cB(\bT)$,
	\item $(Y,\Sigma,\mu)=(\gO,\cF,\bP)$.
\end{itemize}
The space $L^p(\gO;\cX)$ is commonly referred to as the \textit{space of Bochner-integrable random variables}.  
For any $\varphi\in L^1(\bT;\cX)$ we denote by $\partial_t\varphi\in L^1(\bT;\cX)$ the \textit{weak time derivative} of $\varphi$ if for all $\xi\in C^\infty_c(\bT;\bR)$
\bee
\int_0^T \partial_t\xi(t)\varphi(t)dt=-\int_0^T\xi(t)\partial_t\varphi(t)dt,
\eee 
where $\partial_t\xi$ is the classical (in a strong sense) time derivative of $\xi$.
The set $C^\infty_c(\bT;\bR)$ consists of all functions $\xi\in C^\infty(\bT;\bR)$ with compact support in $(0,T)$.
We record the following useful Lemma for the calculus in $L^2(\bT;H)$ (more precisely in Sec.~\ref{subsec:temp_disc}).
\begin{lem}\cite[Theorem 2, Chapter 5.9]{E10}\label{lem:a2bs_calc}
	Let $H=L^2(\cD)$ and $\varphi,\partial_t \varphi\in L^2(\bT;H)$. Then, the mapping $\varphi:\bT\to H$ is continuous, 
	\bee
	\varphi(t_2)=\varphi(t_1)+\int_{t_1}^{t_2}\partial_t \varphi(t)dt,\quad\text{for all $0\le t_1\le t_2\le T$,}
	\eee
	and it holds for $C=C(T)>0$ that 
	\bee
	\max_{t\in\bT}\|\varphi(t)\|_H^2\le C\big(\|\varphi\|^2_{L^2(\bT;H)}+\|\partial_t \varphi\|^2_{L^2(\bT;H)}\big).
	\eee
\end{lem}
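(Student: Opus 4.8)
\emph{Proof idea.} This is the familiar statement that a Hilbert‑space–valued function whose weak time derivative lies in $L^2(\bT;H)$ admits an absolutely continuous representative, and I would prove it by reducing to smooth functions through mollification in time and then passing to the limit. First I would extend $\varphi$ from $\bT$ to a neighbourhood of $\bT$ in $\bR$ — for instance by even reflection across $t=0$ and $t=T$, which keeps the weak derivative in $L^2$ up to a sign — fix a standard mollifier $(\eta_\eps)_{\eps>0}$ on $\bR$, and set $\varphi^\eps:=\eta_\eps*\varphi$. Then $\varphi^\eps\in C^\infty(\bT;H)$, its classical derivative satisfies $\partial_t\varphi^\eps=\eta_\eps*\partial_t\varphi$, and $\varphi^\eps\to\varphi$, $\partial_t\varphi^\eps\to\partial_t\varphi$ in $L^2(\bT;H)$ as $\eps\to0$.

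For the smooth approximants the three assertions are elementary Bochner calculus: the fundamental theorem of calculus gives $\varphi^\eps(t_2)=\varphi^\eps(t_1)+\int_{t_1}^{t_2}\partial_t\varphi^\eps(t)\,dt$ for $0\le t_1\le t_2\le T$, and $t\mapsto\|\varphi^\eps(t)\|_H^2$ is $C^1$ with $\frac{d}{dt}\|\varphi^\eps(t)\|_H^2=2(\partial_t\varphi^\eps(t),\varphi^\eps(t))_H$. Integrating the latter from $s$ to $t$, estimating by Cauchy–Schwarz and Young's inequality, and then averaging the resulting inequality over $s\in\bT$ (so that $\|\varphi^\eps(s)\|_H^2$ is replaced by $\tfrac1T\|\varphi^\eps\|_{L^2(\bT;H)}^2$ and no ``good'' base point $s$ needs to be selected), I obtain
\be
\max_{t\in\bT}\|\varphi^\eps(t)\|_H^2\le C(T)\big(\|\varphi^\eps\|^2_{L^2(\bT;H)}+\|\partial_t\varphi^\eps\|^2_{L^2(\bT;H)}\big),\qquad C(T)=1+\tfrac1T.
\ee

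Next I would apply this estimate to the difference $\varphi^\eps-\varphi^{\eps'}$, whose weak derivative is $\partial_t\varphi^\eps-\partial_t\varphi^{\eps'}$; together with the $L^2(\bT;H)$‑convergence of the mollifications this shows that $(\varphi^\eps)_{\eps>0}$ is Cauchy in $C(\bT;H)$, so it converges uniformly to some $\tilde\varphi\in C(\bT;H)$. Since $\varphi^\eps\to\varphi$ also in $L^2(\bT;H)$, we have $\tilde\varphi=\varphi$ almost everywhere, i.e.\ $\varphi$ has a continuous representative, which we henceforth identify with $\varphi$. Passing to the limit $\eps\to0$ in the displayed identity (using the uniform convergence of $\varphi^\eps$ and the $L^1(\bT;H)$‑convergence of $\partial_t\varphi^\eps$) gives the integral formula, and passing to the limit in the displayed estimate gives $\max_{t\in\bT}\|\varphi(t)\|_H^2\le C(T)\big(\|\varphi\|^2_{L^2(\bT;H)}+\|\partial_t\varphi\|^2_{L^2(\bT;H)}\big)$.

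The only genuinely delicate point is the bookkeeping at the endpoints: one has to make sure the extension and mollification preserve the weak‑derivative relation right up to $t=0$ and $t=T$ (handled by the reflection above, or alternatively by one‑sided mollifiers) and that the constant $C(T)$ in the energy estimate does not depend on $\eps$ — which is precisely why the averaging‑in‑$s$ device is convenient. Everything else is routine; alternatively one may simply invoke \cite[Theorem 2, Chapter 5.9]{E10}.
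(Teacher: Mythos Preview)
Your argument is correct and is essentially the standard mollification proof found in \cite[Theorem~2, Chapter~5.9]{E10}; the paper does not give its own proof of this lemma but simply cites that reference, so there is nothing further to compare.
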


\begin{rem} \label{rem:para}
	We may as well consider non-homogeneous boundary conditions, that is $u(\go,x,t)=g_1(\go,x,t)$ for $g_1:\gO\times\partial\cD\times\bT\to\bR$.
	The corresponding trace operator $\gg$ is still well defined provided that $g_1(\go,\cdot,\cdot)$ can be extended almost surely to a function $\widetilde g_1(\go,\cdot,\cdot)\in L^1(\bT;H^1(\cD))$ with $\partial_t \widetilde g_1(\go,\cdot,\cdot)\in L^1(\bT;H^{-1}(\cD))$.
	Then, $u-\widetilde g_1\in L^1(\bT;V)$ may be regarded as a solution to the modified problem
	\begin{align*}
	\partial_t (u-\widetilde g_1)(\go,x,t)+[A(u-\widetilde g_1)](\go,x,t)&=f(\go,x,t)-[A\widetilde g_1](\go,x,t)
	-\partial_t\widetilde g_1(\go,x,t)\quad\text{on $\gO\times\cD\times\bT$},\\
	(u-\widetilde g_1)(\go,x,0)&=u_0(\go,x)
	-\widetilde g_1(\go,x,0)\quad\text{on $\gO\times\cD\times\{0\}$,\quad and}\\
	(u-\widetilde g_1)(\go,x,t)&=0 \quad\text{on $\gO\times\partial\cD\times\bT$.}
	\end{align*}
	But this is in fact a version of Problem~\eqref{eq:pde} with modified source term and initial value (see also~\cite[Chapter 6.1]{E10}).
\end{rem}

We introduce the bilinear form associated to $A$ in order to derive a weak formulation of the initial-boundary value Problem~\eqref{eq:pde}.
For fixed $\go\in\gO$ and $t\in\bT$, multiplying Eq.~\eqref{eq:pde} with a test function $v\in V$ and integrating by parts yields the variational equation
\be\label{eq:var2}
\int_{\cD}\partial_tu(\go,x,t)v(x)dx +B_\go(u(\go,\cdot,t),v)=F_{\go,t}(v).
\ee
The bilinear form $B_{\go}:V\times V\to\bR$ is given by  
\begin{align*}
B_\go(u,v)&=\int_\cD a(\go,x)\nabla u(x)\cdot\nabla v(x)+b(\go,x)\cdot\nabla u(x)v(x)dx
=(a(\go,\cdot),\sum_{i=1}^d \partial_{x_i}u\,\partial_{x_i}v)+(b(\go,\cdot)\cdot\nabla u,v),
\end{align*}
where $(\cdot,\cdot)$ denotes the $L^2(\cD)$-scalar product.
The source term is transformed into the right hand side functional
\begin{equation*}
F_{\go,t}:V\to\bR,\quad v\mapsto\int_\cD f(\go,x,t)v(x)dx,
\end{equation*}
and the integrals with respect to $\partial_tu$ and $f$ are understood as the duality pairings 
\begin{align*}
\int_\cD \partial_t u(\go,x,t)v(x)dx&=\dualpair{V'}{V}{\partial_t u(\go,\cdot,t)}{v},\\
\int_\cD f(\go,x,t)v(x)dx&=\dualpair{V'}{V}{f(\go,\cdot,t)}{v}.
\end{align*}

\begin{defi}
	For fixed $\go\in\gO$, the \textit{pathwise weak solution} to Problem~\eqref{eq:pde} is a function $u(\go,\cdot,\cdot)\in L^2(\bT;V)$ with $\partial_t u(\go,\cdot,\cdot)\in L^2(\bT;V')$ such that for $t\in\bT$ and all $v\in V$,
	\bee
	\dualpair{V'}{V}{\partial_t u(\go,\cdot,t)}{v}+B_\go(u(\go,\cdot,t),v)=F_{\go,t}(v),\quad u(\go,\cdot,0)=u_0(\go,\cdot).
	\eee
\end{defi}

The following assumptions allow us to show existence and uniqueness of a pathwise weak solution to Eq.~\eqref{eq:pde} and guarantee measurability of the solution map $u:\gO\to L^2(\bT;V)$.\pagebreak
\begin{assumption}\label{ass:para}
	~
	\begin{enumerate}[label=(\roman*)]
		\item For each $x\in\cD$, the mappings $\go\mapsto a(\go,x)$ and $\go\mapsto b(\go,x)$ are measurable.
		\item For all $\go\in\gO$ it holds that 
		\bee
		a_-(\go):=\essinf\limits_{x\in\cD}\, a(\go,x)>0,\quad a_+(\go):=\esssup\limits_{x\in\cD}\, a(\go,x)<+\infty.
		\eee
		\item  $f\in L^p(\gO;L^2(\bT;V')), u_0\in L^p(\gO;H)$ and $1/a_-\in L^q(\gO;\bR)$, for some $p,q\in[1,\infty]$ such that $1/p+1/q\le 1$.
		\item There are constants $\ol b_1,\ol b_2\ge0$ such that $\|b(\go,x)\|_\infty\le\min(\ol b_1a(\go,x),\ol b_2)$ holds for almost all $\go\in\gO$ and almost all $x\in\cD$. Here $\|\cdot\|_\infty$ denotes the supremum norm in $\bR^d$.
	\end{enumerate}
\end{assumption}

\begin{rem}\label{rem:measurability}
	Assumption~\ref{ass:para}(i,ii) implies measurability of the random variables $a_-,a_+:\gO\to\bR$. For instance, $a_+(\go)=\|a(\go,\cdot)\|_{L^\infty(\cD)}$ may be written as the point-wise limit of the measurable functions $\|a(\go,\cdot)\|_{L^n(\cD)}$ for $n\to\infty$, see e.g. \cite[Lemma 13.1]{AB06}.
\end{rem}

\begin{thm}\label{thm:exis}
	For any $w\in L^2(\bT;V)$ define the (pathwise) parabolic norm 
	\bee
	\|w\|_{*,t}:=\Big(\|w(\cdot,t)\|_H^2+\int_0^t |w(\cdot,z)|_{H^1(\cD)}^2dz\Big)^{1/2},\quad t\in\bT.
	\eee
	Under Assumption~\ref{ass:para}, for any $\go\in\gO$, there exists a unique pathwise weak solution $u(\go,\cdot,\cdot)\in L^2(\bT;V)\cap C(\bT;H)$ to Problem~\eqref{eq:pde} and $u:\gO\to L^2(\bT;V),\, \go\mapsto u(\go,\cdot,\cdot)$ is strongly measurable.  
	Further, for any $r\in[1,(1/p+1/q)^{-1}]$
	\be \label{eq:e_est} 
	\begin{split}
		\bE\Big(\sup_{t\in\bT} \|u\|^r_{*,t}\Big)^{1/r}
		&\le C (1+\|1/a_-\|_{L^{q}(\gO;\bR)}) \Big(\|u_0\|_{L^p(\gO;H)}
		+\|f\|_{L^p(\gO;L^2(\bT;V'))}\Big)<+\infty,
	\end{split}
	\ee
	with $C=C(\ol b,T,q)>0$.
	Moreover, if  $f\in L^p(\gO;L^2(\bT;H))$, then for any $r\in[1,(1/p+(1/(2q))^{-1}]$
	\bee
	\begin{split}
		\bE\Big(\sup_{t\in\bT} \|u\|^r_{*,t}\Big)^{1/r}&\le C (1+\|1/a_-\|_{L^{q}(\gO;\bR)}^{1/2}) \Big(\|u_0\|_{L^p(\gO;H)}+\|f\|_{L^p(\gO;L^2(\bT;H))}\Big)<+\infty.
	\end{split}
	\eee
\end{thm}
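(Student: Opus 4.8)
The plan has three ingredients: (a) pathwise existence, uniqueness and regularity for each fixed $\go$, via the classical Galerkin/Lions theory for linear parabolic equations; (b) the quantitative bound by the energy method, carefully tracking the dependence on $a_-(\go)$; (c) measurability of the solution map and the passage to moments via H\"older's inequality on $(\gO,\cF,\bP)$.

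\emph{Pathwise well-posedness.} Fix $\go\in\gO$. By Assumption~\ref{ass:para}(ii),(iv), $a(\go,\cdot)\in L^\infty(\cD)$ with essential infimum $a_-(\go)>0$ and $b(\go,\cdot)\in L^\infty(\cD;\bR^d)$ with $\|b(\go,\cdot)\|_\infty\le\ol b_2$, so $B_\go$ is a bounded bilinear form on $V\times V$ (its norm controlled by $a_+(\go)$, $\ol b_2$ and $\cD$, using \eqref{eq:nabla_norm} and the Poincar\'e inequality). The essential observation is that $B_\go$ satisfies a G\r{a}rding inequality
\[
B_\go(v,v)\ \ge\ \tfrac34\Bigl(a(\go,\cdot),\textstyle\sum_{i=1}^d\partial_{x_i}v\,\partial_{x_i}v\Bigr)-\gk\|v\|_H^2\ \ge\ \tfrac34\,a_-(\go)\,|v|^2_{H^1(\cD)}-\gk\|v\|_H^2,\qquad v\in V,
\]
in which the constant $\gk=2^{d-1}\ol b_1\ol b_2$ does \emph{not} depend on $\go$: this is obtained by using \emph{both} bounds in Assumption~\ref{ass:para}(iv) simultaneously, in the form $\|b(\go,x)\|_\infty^2\le\ol b_1\ol b_2\,a(\go,x)$, together with the Cauchy--Schwarz inequality, \eqref{eq:nabla_norm} and Young's inequality applied to $(b(\go,\cdot)\cdot\nabla v,v)$. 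Boundedness together with the G\r{a}rding inequality allow one to invoke the standard existence and uniqueness theorem for parabolic initial-boundary value problems (e.g.\ \cite[Ch.~7.1]{E10}), yielding a unique pathwise weak solution $u(\go,\cdot,\cdot)\in L^2(\bT;V)$ with $\partial_t u(\go,\cdot,\cdot)\in L^2(\bT;V')$; the $V\subset H\subset V'$ Gelfand-triple embedding (the analogue of Lemma~\ref{lem:a2bs_calc}) then gives $u(\go,\cdot,\cdot)\in C(\bT;H)$. On the $\bP$-null set where $f(\go,\cdot,\cdot)\notin L^2(\bT;V')$ one sets $u(\go,\cdot,\cdot):=0$.

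\emph{Energy estimate.} Testing the variational equation with $v=u(\go,\cdot,t)$ and using $\tfrac{d}{dt}\|u\|_H^2=2\,\dualpair{V'}{V}{\partial_t u}{u}$ (valid since $u,\partial_t u\in L^2(\bT;\cdot)$, cf.\ Lemma~\ref{lem:a2bs_calc}), the G\r{a}rding inequality yields
\[
\tfrac12\tfrac{d}{dt}\|u(\go,\cdot,t)\|_H^2+\tfrac34\,a_-(\go)\,|u(\go,\cdot,t)|^2_{H^1(\cD)}\ \le\ \gk\,\|u(\go,\cdot,t)\|_H^2+F_{\go,t}(u(\go,\cdot,t)).
\]
For $V'$-valued data one bounds $F_{\go,t}(u)\le\|f(\go,\cdot,t)\|_{V'}\|u\|_V\le C(\cD)\|f(\go,\cdot,t)\|_{V'}|u|_{H^1(\cD)}$ and absorbs half of the gradient term via Young's inequality, which creates a factor $a_-(\go)^{-1}$ in front of $\|f(\go,\cdot,t)\|_{V'}^2$; Gr\"onwall's inequality (with the $\go$-free exponent $2\gk T$) bounds $\sup_{t\in\bT}\|u(\go,\cdot,t)\|_H^2$, and re-substituting this into the time-integrated inequality bounds $\int_0^T|u(\go,\cdot,z)|^2_{H^1(\cD)}dz$, generating a \emph{second} factor $a_-(\go)^{-1}$. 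Since $\|u\|_{*,t}^2\le\sup_{s\in\bT}\|u(\go,\cdot,s)\|_H^2+\int_0^T|u(\go,\cdot,z)|^2_{H^1(\cD)}dz$ for every $t\in\bT$, one obtains the pathwise bound
\[
\sup_{t\in\bT}\|u\|_{*,t}\ \le\ C(\ol b,T)\bigl(1+a_-(\go)^{-1}\bigr)\bigl(\|u_0(\go,\cdot)\|_H+\|f(\go,\cdot,\cdot)\|_{L^2(\bT;V')}\bigr).
\]
If instead $f\in L^p(\gO;L^2(\bT;H))$, one uses $F_{\go,t}(u)=(f(\go,\cdot,t),u(\go,\cdot,t))_H\le\tfrac12\|f(\go,\cdot,t)\|_H^2+\tfrac12\|u(\go,\cdot,t)\|_H^2$, which is absorbed \emph{directly} into the Gr\"onwall term; then the bound for $\sup_t\|u(\go,\cdot,t)\|_H^2$ carries no factor $a_-(\go)^{-1}$ and only the single one from the gradient term remains, giving $\sup_{t\in\bT}\|u\|_{*,t}\le C(\ol b,T)(1+a_-(\go)^{-1})^{1/2}(\|u_0(\go,\cdot)\|_H+\|f(\go,\cdot,\cdot)\|_{L^2(\bT;H)})$.

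\emph{Measurability and moments.} The finite-dimensional Galerkin approximations $u_n(\go)$ solve linear ODE systems whose coefficients are measurable in $\go$ (measurability of maps such as $\go\mapsto(a(\go,\cdot),\sum_{k}\partial_{x_k}\vp_i\,\partial_{x_k}\vp_j)$ follows from Assumption~\ref{ass:para}(i) along the lines of Remark~\ref{rem:measurability}), so each $u_n:\gO\to C(\bT;\bR^n)$ is measurable; by uniqueness of the weak solution the whole sequence converges to $u(\go,\cdot,\cdot)$ weakly in $L^2(\bT;V)$, whence $u:\gO\to L^2(\bT;V)$ is strongly measurable ($L^2(\bT;V)$ being separable), and $\go\mapsto\sup_{t\in\bT}\|u(\go)\|_{*,t}$ is measurable by continuity in $t$. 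Taking $L^r(\gO)$-norms in the two pathwise bounds and applying Minkowski's and H\"older's inequalities on $(\gO,\cF,\bP)$ --- pairing $1+a_-^{-1}\in L^q(\gO)$ (respectively $(1+a_-^{-1})^{1/2}\in L^{2q}(\gO)$) with the $\go$-dependent data norm in $L^p(\gO)$, which is admissible exactly for $r\le(1/p+1/q)^{-1}$ (respectively $r\le(1/p+1/(2q))^{-1}$), and using $L^{r'}(\gO)\hookrightarrow L^r(\gO)$ for $r\le r'$ on the probability space together with $\|1+a_-^{-1}\|_{L^q(\gO)}\le1+\|1/a_-\|_{L^q(\gO;\bR)}$ and $\|(1+a_-^{-1})^{1/2}\|_{L^{2q}(\gO)}\le1+\|1/a_-\|_{L^q(\gO;\bR)}^{1/2}$ --- yields \eqref{eq:e_est} and its $L^2(\bT;H)$ counterpart, the right-hand sides being finite by Assumption~\ref{ass:para}(iii).

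\emph{Main obstacle.} The delicate point is the G\r{a}rding inequality in the first step: obtaining a zeroth-order constant $\gk$ that is independent of $\go$ --- so that the Gr\"onwall factor $e^{2\gk T}$ is $\go$-free and the only remaining $\go$-dependence is through $a_-(\go)$ --- is precisely what forces the combined use of the two estimates in Assumption~\ref{ass:para}(iv). The rest is careful bookkeeping, in particular keeping exact count of the powers of $a_-(\go)^{-1}$: two for $V'$-valued data, one for $L^2(\bT;H)$-valued data.
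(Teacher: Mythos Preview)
Your argument is correct. The energy estimate and the final H\"older step coincide with the paper's proof almost verbatim: the paper, too, exploits $\|b(\go,x)\|_\infty^2\le\ol b_1\ol b_2\,a(\go,x)$ to obtain the $\go$-independent Gr\"onwall constant $2^{d-1}\ol b_1\ol b_2$, and then splits according to $\{a_-\le1\}$ versus $\{a_->1\}$ to arrive at the same pathwise bounds with factors $(1+a_-^{-1})$ and $(1+a_-^{-1})^{1/2}$ that you obtain directly.

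The one genuinely different ingredient is the measurability argument. The paper does not pass through Galerkin approximations; instead it defines, for each basis element $v_i\in V$, the Carath\'eodory functional
\[
J_i(\go,w)=\int_0^T B_\go(w(\cdot,t),v_i)-F_{\go,t}(v_i)+\dualpair{V'}{V}{\partial_t w(\cdot,t)}{v_i}\,dt
\]
on $\gO\times\big(L^2(\bT;V)\times L^2(\bT;V')\big)$, uses measurable-correspondence machinery (\cite[Corollary~18.8, Theorem~18.25]{AB06}) to conclude that the graph $\bigcap_i\{J_i=0\}$ is measurable, and then reads off measurability of both $u$ and $\partial_t u$ as marginals. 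Your route via measurability of the finite-dimensional Galerkin solutions and Pettis' theorem in the separable space $L^2(\bT;V)$ is more elementary and self-contained; the paper's route is more abstract but has the advantage that it simultaneously yields measurability of $\partial_t u:\gO\to L^2(\bT;V')$ and provides a template reused later (Proposition~\ref{prop:meas2}) for the sample-adapted semi-discrete solution, where the test space itself depends on $\go$ and a direct Galerkin-limit argument would be less convenient.
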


\begin{proof}
	For fixed $\go\in\gO$,  the bilinear form $B_\go: V\times V\to\bR$ in Eq.~\eqref{eq:var2} is continuous and coercive by Assumption~\ref{ass:para}. 
	Hence, existence and uniqueness of a pathwise weak solution $u(\go,\cdot,\cdot)\in L^2(\bT;V)\cap C(\bT;H)$ to Problem~\eqref{eq:pde} follows as for deterministic parabolic problems, see for instance \cite[Chapter 7.1]{E10} or \cite[Chapter 11]{QV97}. 
	
	Now define the space $\cX:=L^2(\bT;V)\times L^2(\bT;V')$ with norm $\|(y_1,y_2)\|_\cX:=\|y_1\|_{L^2(\bT;V)}+\|y_2\|_{L^2(\bT;V')}$ and note that the mapping $\gO\to \cX,\,\go\mapsto (u(\go,\cdot,\cdot),\partial_tu(\go,\cdot,\cdot))$ is well-defined. 
	Let $(v_i,i\in\bN)\subset V$ be a basis of $V$ and for fixed $t\in\bT$ and $i\in\bN$ define the functional 
	\bee
	J_i:\gO\times \cX,\; (\go,w)\mapsto \int_0^\bT B_\go(w(\cdot,t) ,v_i)-F_{\go,t}(v_i)+\dualpair{V'}{V}{\partial_t w(\cdot,t)}{v_i}dt.
	\eee
	By Assumption~\ref{ass:para}, it follows that $J_i$ is a \textit{Carath\'eodory map}, i.e. measurable in $\gO$ and continuous in $\cX$, and thus $\cF\otimes\cB(\cX)-\cB(\bR)$-measurable.
	The separability of $L^2(\bT;V)$ and $L^2(\bT;V')$ entails separability of $\cX$ and, furthermore, $\cB(\cX)=\cB(L^2(\bT;V))\otimes \cB(L^2(\bT;V'))$.	
	To show the measurability of $u$, we define the correspondence 
	\bee
	\varphi_i(\go):=\{w\in \cX|\, J_i(\go,w)=0\}.
	\eee
	By	\cite[Corollary 18.8]{AB06} the graph $\text{Gr}(\varphi_i):=\{(\go,w)\in\gO\times \cX|\, w\in\varphi_i(\go)\}$ is measurable, i.e. $\text{Gr}(\varphi_i)\in\cF\otimes\cB(\cX)$.
	Since this yields 
	\bee
	\{(\go,u(\go,\cdot,\cdot),\partial_tu(\go,\cdot,\cdot))|\,\go\in\gO\}=\bigcap_{i\in\bN}\text{Gr}(\varphi_i)\in\cF\otimes\cB(\cX),
	\eee 
	the mapping $\go\to (u(\go,\cdot,\cdot),\partial_tu(\go,\cdot,\cdot))$ is $\cF-\cB(\cX)$-measurable (see e.g. \cite[Theorem 18.25]{AB06}).
	As $\cB(\cX)=\cB(L^2(\bT;V))\otimes \cB(L^2(\bT;V'))$, the marginal mappings $u:\gO\to L^2(\bT;V)$ and $\partial_tu:\gO\to L^2(\bT;V')$ are strongly $\cF-\cB(L^2(\bT;V))$-measurable and $\cF-\cB(L^2(\bT;V'))$-measurable, respectively.
	We note that it is sufficient to test against a basis of $V$ in order to obtain the measurability of the $L^2(\bT;V')$-valued map $\partial_t u$, since the embeddings $V\subset H\subset V'$ are dense.
	
	To show the estimate~\eqref{eq:e_est}, we fix $\go\in\gO,\, t\in\bT$, test against $v=u(\go,\cdot,t)\in V$ in Eq.~\eqref{eq:var2} and obtain
	\bee
	\dualpair{V'}{V}{\partial_t u(\go,\cdot,t)}{u(\go,\cdot,t)}+B_\go(u(\go,\cdot,t),u(\go,\cdot,t))=F_{\go,t}(u(\go,\cdot,t)).
	\eee
	As $u(\go,\cdot,\cdot)\in L^2(\bT;V)$ it holds that 
	\bee
	\dualpair{V'}{V}{\partial_t u(\go,\cdot,t)}{u(\go,\cdot,t)}=
	\frac{1}{2}\frac{d}{dt} \|u(\go,\cdot,t)\|_H^2,
	\eee
	see i.e. \cite[Chapter 5.9]{E10}. 
	Rearranging the terms yields
	\be
	\begin{split} \label{eq:energy1}
		\frac{1}{2}\frac{d}{dt}\|u(\go,\cdot,t)\|_H^2+(a(\go,\cdot),\sum_{i=1}^d(\partial_{x_i}u(\go,\cdot,t))^2)
		&=-(b(\go,\cdot) \cdot\nabla  u(\go,\cdot,t),u(\go,\cdot,t))+F_{\go,t}(u(\go,\cdot,t))\\
		&=:I+II.
	\end{split}
	\ee
	The first term is bounded with Young's inequality, Assumption~\ref{ass:para} and Ineq.~\eqref{eq:nabla_norm} via
	\begin{align*}
	I&\le \frac{2^{1-d}}{4\ol b_1}\|\|b(\go,\cdot)\|^{1/2}_\infty\sum_{i=1}^d|\partial_{x_i}u(\go,\cdot,t)|\|_H^2
	+2^{d-1}\ol b_1\|\|b(\go,\cdot)\|^{1/2}_\infty u(\go,\cdot,t)\|_H^2\\
	&\le \frac{1}{4}(a(\go,\cdot),\sum_{i=1}^d(\partial_{x_i}u(\go,\cdot,t))^2)+2^{d-1}\ol b_1\ol b_2\|u(\go,\cdot,t)\|_H^2.
	\end{align*}
	By the Poincar\'e inequality it holds that $\|u\|^2_V=|u|^2_{H^1(\cD)}+\|u\|^2_H\le (1+C^2)|u|^2_{H^1(\cD)}$ and we estimate $II$ by
	\begin{align*}
	II&\le (1+C^2)\frac{\|f(\go,\cdot,t)\|_{V'}^2}{a_-(\go)}+ \frac{a_-(\go)}{4(1+C^2)}\|u(\go,\cdot,t)\|_V^2\\
	&\le (1+C^2)\frac{\|f(\go,\cdot,t)\|_{V'}^2}{a_-(\go)}+ \frac{a_-(\go)}{4}|u(\go,\cdot,t)|_{H^1(\cD)}^2\\
	&\le (1+C^2)\frac{\|f(\go,\cdot,t)\|_{V'}^2}{a_-(\go)}+ \frac{1}{4}(a(\go,\cdot),\sum_{i=1}^d(\partial_{x_i}u(\go,\cdot,t))^2).
	\end{align*}
	Hence, Eq.~\eqref{eq:energy1} implies
	\begin{align*}
	\frac{d}{dt}\|u(\go,\cdot,t)\|_H^2+(a(\go,\cdot),\sum_{i=1}^d(\partial_{x_i}u(\go,\cdot,t))^2)
	\le C\Big(\frac{\|f(\go,\cdot,t)\|_{V'}^2}{a_-(\go)}+\|u(\go,\cdot,t)\|_H^2\Big).
	\end{align*}
	We integrate over $\bT$ and use Gr\"onwall's inequality to obtain
	\begin{align*}
	\|u(\go,\cdot,t)\|_H^2+a_-(\go)\int_0^t|u(\go,\cdot,z)|_{H^1(\cD)}^2dz
	&\le \|u(\go,\cdot,t)\|_H^2+\int_0^t(a(\go,\cdot),\sum_{i=1}^d(\partial_{x_i}u(\go,\cdot,z))^2)dz\\
	&\le \exp(CT)\Big(\|u_0(\go,\cdot)\|_H^2+\frac{\|f(\go,\cdot,\cdot)\|_{L^2(\bT;V')}^2}{a_-(\go)}\Big),
	\end{align*}
	where we emphasize that the last estimate is independent of $t$.
	If $a_-(\go)\le1$ holds for fixed $\go$,
	\begin{align*}
	\sup_{t\in\bT}\|u(\go,\cdot,\cdot)\|^2_{*,t}
	&=\sup_{t\in\bT}\Big(\|u(\go,\cdot,t)\|_H^2+\int_0^t|u(\go,\cdot,z)|_{H^1(\cD)}^2dz\Big)\\
	&\le \exp(CT)\left(\frac{\|u_0(\go,\cdot)\|_H^2+\|f(\go,\cdot,\cdot)\|_{L^2(\bT;V')}^2}{a^2_-(\go)}\right).
	\end{align*}
	On the other hand, if $a_-(\go)>1$, it follows that
	\begin{align*}
	\sup_{t\in\bT}\|u(\go,\cdot,\cdot)\|^2_{*,t}\le 
	\exp(CT)\big(\|u_0(\go,\cdot)\|_H^2+\|f(\go,\cdot,\cdot)\|_{L^2(\bT;V')}^2\big).
	\end{align*}
	With the inequalities $\sqrt{c_1+c_2}\le \sqrt{c_1}+\sqrt{c_2}$ and $(c_1+c_2)^r\le 2^{r-1}(c_1^r+c_2^r)$ for $c_1,c_2\ge0,r\ge1$, and by taking expectations this yields for any $r\in[1,(1/p+1/q)^{-1}]$
	\begin{align*}
	\bE\Big(\sup_{t\in\bT} \|u\|^r_{*,t}\Big)^{1/r}
	&\le C\bE\Big(\frac{\|u_0\|_H^r+\|f\|^r_{L^2(\bT;H)}}{a^r_-}\indi_{\{a_-\le 1\}}+(\|u_0\|_H^r+\|f\|^r_{L^2(\bT;V')})\indi_{\{a_->1\}}\Big)^{1/r}\\
	&\le C(1+\|1/a_-\|_{L^q(\gO;\bR)})\Big(\|u_0\|_{L^p(\gO;H)}+\|f\|_{L^p(\gO;L^2(\bT;V'))}\Big),
	\end{align*}
	where we have used Assumption~\ref{ass:para} and H\"older's inequality for the last estimate.
	
	For the second part of the claim, given that $f\in L^p(\gO;L^2(\bT;H))$, we may bound $II$ via
	\bee
	II\le \frac{1}{2}\|f(\go,\cdot,t)\|_{H}^2+ \frac{1}{2}\|u(\go,\cdot,t)\|_H^2
	\eee
	and proceed as for the first term, using Gr\"onwall's inequality, to obtain
	\begin{align*}
	\|u(\go,\cdot,t)\|_H^2+a_-(\go)\int_0^t|u(\go,\cdot,z)|_{H^1(\cD)}^2dz
	\le C\Big(\|u_0(\go,\cdot)\|_H^2+\|f(\go,\cdot,\cdot)\|_{L^2(\bT,H)}^2\Big).
	\end{align*}
	Finally, with H\"older's inequality it follows for any $r\in[1,(1/p+1/(2q))^{-1}]$ that
	\begin{align*}
	\bE\Big(\sup_{t\in\bT} \|u\|^r_{*,t}\Big)^{1/r}\le 
	C (1+\|1/a_-\|_{L^q(\gO;\bR)}^{1/2})\Big(\|u_0\|_{L^p(\gO;H)}+\|f\|_{L^p(\gO;L^2(\bT;H))}\Big).
	\end{align*}
	
\end{proof}

To incorporate discontinuities at random submanifolds of $\cD$, we introduce the jump-diffusion coefficient $a$ and jump-advection coefficient $b$ in the subsequent section.
The introduced coefficients allow us to derive well-posedness- and regularity results based on Theorem~\ref{thm:exis} for the solution to the parabolic problem with discontinuous coefficients.

\section[Random discontinuous problems]{Random parabolic problems with discontinuous coefficients}\label{sec:jump_diffusion}
To obtain a stochastic jump-diffusion coefficient representing the permeability in a subsurface flow model, we use the random coefficient $a$ from the elliptic diffusion problem in~\cite{BS18b} consisting of a (spatial) Gaussian random field with additive discontinuities on random submanifolds of $\cD$. The specific structure of $a$ may be utilized to model the hydraulic conductivity within heterogeneous and/or fractured media and is thus considered time-independent (see also Remark~\ref{rem:para}).
The advection term in this model should then be driven by the same random field and inherit the same discontinuous structure as the diffusion term. Thus, we consider the coefficient $b$ as an essentially linear mapping of $a$.
Since the coefficients usually involve infinite series expansions in the Gaussian field and/or sampling errors in the jump measure, 
we further describe how to obtain tractable approximations of $a$ and $b$.
Subsequently, existence and stability results for weak solutions of the unapproximated resp. approximated parabolic problems based on Theorem~\ref{thm:exis} are proved.
We conclude this section by showing that the approximated solution converges to the solution $u$ of the (unapproximated) advection-diffusion problem in a suitable norm.

\subsection{Jump-diffusion coefficients and their approximations}
\begin{defi}\label{def:a2}
	The \textit{jump-diffusion coefficient} $a$ is defined as
	\begin{equation*}
	a:\gO\times\cD\to\bR_{>0},\quad (\go,x)\mapsto\ol a(x)+\Phi(W(\go,x))+P(\go,x),
	\end{equation*}
	where
	\begin{itemize}
		\item $\ol a\in C^1(\ol\cD;\bR_{\ge0})$ is non-negative, continuous and bounded.
		\item $\Phi\in C^1(\bR;\bR_{>0})$ is a continuously differentiable, positive mapping.
		\item $W\in L^2(\gO;H)$ is a zero-mean Gaussian random field. Associated to $W$ is a non-negative, symmetric trace class operator $Q:H\to H$.
		\item $\cT:\gO\to\cB(\cD),\;\go\mapsto\{\cT_1,\dots,\cT_{\tau}\}$ is a random partition of $\cD$, i.e. the $\cT_i$ are disjoint open subsets of $\cD$ such that $|\cT_i|>0$ for $i=1,\dots,\tau(\go)$ and $\overline\cD=\bigcup_{i=1}^\tau\ol\cT_i$.
		The number $\tau$ of elements in $\cT$ is a random variable $\tau:\gO\to\bN$ on $(\gO,\cF,\bP)$. 
		Associated to $\cT$ is a measure $\gl$ on $(\cD,\cB(\cD))$ that controls the position of the random elements $\cT_i$. 
		\item $(P_i, i\in\bN)$ is a sequence of non-negative random variables on $(\gO,\cF,\bP)$ and
		$$P:\gO\times\cD\to\bR_{\ge0},\quad(\go,x)\mapsto\sum_{i=1}^{\tau(\go)}\indi_{\{\cT_i\}}(x)P_i(\go).$$
		The sequence $(P_i, i\in\bN)$ is independent of $\tau$ (but not necessarily i.i.d.).
		
	\end{itemize}
	Based on $a$, the \textit{jump-advection coefficient} $b$ is given for vector fields $\widetilde b_1, \widetilde b_2\in L^\infty(\cD)^d$ by 
	\begin{equation*}
	b:\gO\times\cD\to\bR^d,\quad (\go,x)\mapsto \min (a(\go,x)\widetilde b_1(x), \widetilde b_2(x)). 
	\end{equation*}
	
\end{defi}

\begin{rem}\label{rem:advection}
	The definition of the jump-advection coefficient immediately implies Assumption~\ref{ass:para}(iv) since
	$$\|b(\go,x)\|_\infty\le\min(\ol b_1a(\go,x),\ol b_2)$$
	holds with suitable constants $\ol b_1,\ol b_2>0$ for almost all $\go\in\gO$  and almost all $x\in\cD$ .
	The upper bound with respect to $\ol b_2$ is due to technical reasons and not restrictive in practical applications, as $\ol b_2$ may be arbitrary large.
\end{rem}

In general, the structure of $a$ as in Def.~\ref{def:a2} does not allow us to draw samples from the exact distribution of this random function.
We remark that $\gl$ may be used to concentrate the submanifolds that generate $\cT$ on certain areas in $\cD$, see Section~\ref{sec:num2} for examples.
The Gaussian random field may be approximated by truncated \KL expansions:
Let $((\eta_i,e_i), i\in\bN)$ denote the sequence of eigenpairs of $Q$, where $Q:H\to H$ is the covariance operator of the Gaussian field $W$ and the eigenvalues are given in decaying order $\eta_1\ge\eta_2\ge\dots\ge0$.
Since $Q$ is trace class, the Gaussian random field $W$ admits the representation 
\be\label{eq:KL_expansion}
W=\sum_{i\in\bN}\sqrt{\eta_i}e_iZ_i,
\ee
where $(Z_i,i\in\bN)$ are independent standard normally distributed random variables.
The series above converges in $L^2(\gO;H)$ and almost surely (see e.g.~\cite{BL12}).
The truncated \KL expansion $W_N$ of $W$ is then given by 
\be\label{eq:KL_expansion_trunc}
W_N:=\sum_{i=1}^N\sqrt{\eta_i}e_iZ_i,
\ee
where we call $N\in\bN$ the \textit{cut-off index} of $W_N$.
In addition, it may be possible that the sequence of jumps $(P_i, i\in\bN)$ cannot be sampled exactly but only with an intrinsic bias (see \cite[Remark 3.4]{BS18b}).
The biased samples are denoted by $(\widetilde P_i, i\in\bN)$ and the error which is induced by this approximation is represented by the parameter $\eps>0$ (see Assumption~\ref{ass:EV2}).
To approximate $P$ using the biased sequence $(\widetilde P_i, i\in\bN)$ instead of $(P_i, i\in\bN)$ we define 
\bee
P_\eps:\gO\times\cD\to\bR,\quad(\go,x)\mapsto\sum_{i=1}^{\tau(\go)}\indi_{\{\cT_i\}}(x)\widetilde P_i(\go).
\eee
The \textit{approximated jump-diffusion coefficient} $a_{N,\eps}$ is then given by
\be \label{eq:a_approx2}
a_{N,\eps}(\go,x):=\ol a(x)+\Phi(W_N(\go,x))+P_\eps(\go,x),
\ee
and the \textit{approximated jump-advection coefficient} $b_{N,\eps}$ via
\bee
b_{N,\eps}(\go,x):=\min (a_{N,\eps}(\go,x)\widetilde b_1(x), \widetilde b_2(x)).
\eee
Substituting the approximated jump coefficients into the parabolic model Problem~\eqref{eq:pde} yields  
\begin{align}\label{eq:pde_approx}
\begin{split}
\partial_t u_{N,\eps}(\go,x,t)+[A_{N,\eps}u_{N,\eps}](\go,x,t)&=f(\go,x,t)\quad\text{in $\gO\times\cD\times (0,T]$},\\
u_{N,\eps}(\go,x,0)&=u_0(\go,x)\quad\text{in $\gO\times\cD\times\{0\}$},\\
u_{N,\eps}(\go,x)&=0\quad\text{on $\gO\times\partial\cD$},
\end{split}
\end{align}
where the approximated second order differential operator $A_{N,\eps}$ is given by 
\bee
[A_{N,\eps}u](\go,x,t)=-\nabla\cdot \left(a_{N,\eps}(\go,x)\nabla u(\go,x,t)\right)+b_{N,\eps}(\go,x)\cdot\nabla  u(\go,x,t).
\eee
The pathwise variational formulation of Eq.~\eqref{eq:pde_approx} is then analogous to Eq.~\eqref{eq:var2} given by: 
For fixed $\go\in\gO$ with given $f(\go,\cdot)$, find $u_{N,\eps}(\go,\cdot,\cdot)\in L^2(\bT;V)$ with $\partial_t u_{N,\eps}(\go,\cdot,\cdot)\in L^2(\bT;V')$ such that it holds, for $t\in\bT$ and for all $v\in V$
\be\label{eq:var2_approx}
\dualpair{V'}{V}{\partial_t u_{N,\eps}(\go,\cdot,t)}{v}+B_\go^{N,\eps}(u_{N,\eps}(\go,\cdot,t),v)=F_{\go,t}(v).
\ee
The approximated bilinear form is given for $v,w\in V$ by
\begin{align*}
B^{N,\eps}_\go(v,w)=\int_\cD a_{N,\eps}(\go,x)\nabla v(x)\cdot\nabla w(x)+b_{N,\eps}(\go,x)\cdot\nabla   v(x)w(x)dx.
\end{align*}

The following assumptions guarantee that we can apply Theorem~\ref{thm:exis} also in the jump-diffusion setting and that therefore pathwise solutions $u$ and $u_{N,\eps}$ exist.

\begin{assumption}\label{ass:EV2}
	~
	\begin{enumerate}[label=(\roman*)]
		\item The eigenfunctions $e_i$ of $Q$ are continuously differentiable on $\cD$ and there exist constants $\ga,\gb,C_e,C_\eta>0$ such that for any $i\in\bN$
		\bee
		\|e_i\|_{L^\infty(\cD)}\le C_e,\quad\max_{j=1,\dots,d}\|\partial_{x_j} e_i\|_{L^\infty(\cD)}\le C_ei^\alpha\quad\text{and}\quad\sum_{i=1}^\infty\eta_ii^\gb\leq C_\eta<+\infty.
		\eee 
		
		\item Furthermore, the mapping $\Phi$ as in Definition~\ref{def:a2} and its derivative are bounded for $w\in\bR$ by
		\bee
		\phi_1\exp(\phi_2 w)\ge \Phi(w)\ge\phi_{1}\exp(-\phi_2 w),\quad|\frac{d}{dx}\Phi(w)|\le\phi_3\exp(\phi_4|w|),
		\eee
		where $\phi_1,\dots,\phi_4>0$ are arbitrary constants.
		\item There exists $p>1$ such that $f\in L^p(\gO;L^2(\bT;V'))$ and $u_0\in L^p(\gO;H)$. 
		\item The sequence $(P_i, i\in\bN)$ consists of nonnegative and bounded random variables $P_i\in [0,\ol P]$ for some $\ol P>0$. In addition, for $s>1$ such that $1/p+1/s<1$ there exists a sequence of approximations $(\widetilde P_i, i\in\bN)\subset[0,\ol P]^\bN$ so that the sampling error is bounded, for some $\eps>0$, by
		\begin{equation*}
		\bE(|\widetilde P_i-P_i|^s)\le\eps,\quad i\in\bN.
		\end{equation*}	
	\end{enumerate}
\end{assumption}

\begin{rem}
	The exponential bounds on $\Phi$ and its derivative imply that $u\in L^r(\gO;L^2(\bT;V))$ for any $r\in[1,p)$. That is, the integrability of $u$ with respect to $\gO$ only depends on the stochastic regularity of $f$ and $u_0$. In fact, Theorem~\ref{thm:exis} shows that far weaker assumptions on $a$ (resp. $\Phi$) are possible to achieve $u\in L^r(\gO;L^2(\bT;V))$, at the cost that $r$ then also depends on the integrability of $a_-$. At this point we refer to~\cite{BS18b}, where the regularity of an elliptic diffusion problem with $a$ as in Definition~\ref{def:a2}, but less restricted functions $\Phi$ and $P$ is investigated.
	However, Assumption~\ref{ass:EV2} includes the important case that $\Phi(W)$ is a log-Gaussian random field and the bounds on $\Phi$ are merely imposed for a clear and simplified presentation of the results. 
	On a further note, the assumptions on the eigenpairs $((\eta_i,e_i),i\in\bN)$ are natural and include the case that $Q$ is a Mat\'ern-type or Brownian-motion-type covariance function.
\end{rem}

\begin{lem}\label{lem:a2}
	Let $a$ and $b$ be as in Definition~\ref{def:a2}, let $a_{N,\eps}$ and $b_{N,\eps}$ given by Eq.~\eqref{eq:a_approx2}, and let Assumption~\ref{ass:EV2} hold. 
	Then, each pair $(a,b)$ and $(a_{N,\eps},b_{N,\eps})$ satisfies Assumption~\ref{ass:para}(i) and~(ii).
	
	Moreover, define the real-valued random variables
	\begin{align*}
	&a_-:=\essinf\limits_{x\in\cD} a(\go,x),\quad a_{N,\eps,-}:=\essinf\limits_{x\in\cD} a_{N,\eps}(\go,x),\\
	&a_+:=\esssup\limits_{x\in\cD} a(\go,x),\quad a_{N,\eps,+}:=\esssup\limits_{x\in\cD} a_{N,\eps}(\go,x).
	\end{align*}
	Then, $1/a_-,1/a_{N,\eps,-},a_+,a_{N,\eps,+}\in L^q(\gO;\bR)$ for any $q\in [1,\infty)$ and there exists $C=C(q,\phi_1,\phi_2)>0$, independent of $N$ and $\eps$, such that 
	\begin{equation*}
	\|1/a_-\|_{L^q(\gO;\bR)},\;\|1/a_{N,\eps,-}\|_{L^q(\gO;\bR)},\;\|a_{+}\|_{L^q(\gO;\bR)},\;\|a_{N,\eps,+}\|_{L^q(\gO;\bR)}\le C<+\infty.
	\end{equation*}
\end{lem}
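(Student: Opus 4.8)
The plan is to deduce everything from three inputs: the two-sided exponential bound on $\Phi$ in Assumption~\ref{ass:EV2}(ii), the uniform boundedness $P_i,\widetilde P_i\in[0,\ol P]$ in Assumption~\ref{ass:EV2}(iv), and an exponential-moment bound for $\|W(\go,\cdot)\|_{L^\infty(\cD)}$ and $\|W_N(\go,\cdot)\|_{L^\infty(\cD)}$ that is uniform in the cut-off index $N$; throughout one works with the continuous modification of the Gaussian field constructed below. Assumption~\ref{ass:para}(i) is then immediate: for fixed $x\in\cD$ the map $\go\mapsto W_N(\go,x)=\sum_{i=1}^N\sqrt{\eta_i}e_i(x)Z_i(\go)$ is measurable as a finite linear combination of the $Z_i$ and $\go\mapsto W(\go,x)$ is measurable as the almost sure limit of these partial sums; since $\Phi$ and $\min$ are continuous, $\ol a,e_i,\widetilde b_1,\widetilde b_2$ are deterministic, and $\tau,P_i,\widetilde P_i$ and the partition $\cT$ are measurable by Definition~\ref{def:a2}, composition of measurable maps yields measurability of $\go\mapsto a(\go,x),a_{N,\eps}(\go,x),b(\go,x),b_{N,\eps}(\go,x)$.

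Next I would record the pointwise bounds that reduce the lemma to a statement about $\|W\|_{L^\infty(\cD)}$. From Assumption~\ref{ass:EV2}(ii),(iv) and $\ol a\ge0$ one gets, for a.e. $\go$ and $x$,
\[
\phi_1\exp\!\big(-\phi_2|W(\go,x)|\big)\le a(\go,x)\le\|\ol a\|_{L^\infty(\cD)}+\phi_1\exp\!\big(\phi_2|W(\go,x)|\big)+\ol P,
\]
and the same inequalities with $W$ replaced by $W_N$ and $P$ by $P_\eps$. Taking essential infimum/supremum over $x\in\cD$ and using monotonicity of $\exp$ gives
\[
\frac{1}{a_-(\go)}\le\phi_1^{-1}\exp\!\big(\phi_2\|W(\go,\cdot)\|_{L^\infty(\cD)}\big),\qquad a_+(\go)\le\|\ol a\|_{L^\infty(\cD)}+\phi_1\exp\!\big(\phi_2\|W(\go,\cdot)\|_{L^\infty(\cD)}\big)+\ol P,
\]
and likewise for $1/a_{N,\eps,-}$ and $a_{N,\eps,+}$ with $\|W_N(\go,\cdot)\|_{L^\infty(\cD)}$. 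In particular, once $\|W(\go,\cdot)\|_{L^\infty(\cD)},\|W_N(\go,\cdot)\|_{L^\infty(\cD)}<+\infty$ almost surely, Assumption~\ref{ass:para}(ii) holds, i.e.\ $0<a_-(\go)\le a_+(\go)<+\infty$ and the same for $a_{N,\eps}$.

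To control these $L^\infty$-norms I would argue as in~\cite{BS18b} using Assumption~\ref{ass:EV2}(i): from $|e_i(x)-e_i(y)|\le\min(2C_e,\sqrt{d}\,C_e i^\alpha|x-y|)$ and $\eta_i\le C_\eta i^{-\beta}$ (the latter since all terms in $\sum_i\eta_i i^\beta\le C_\eta$ are nonnegative), splitting the series at $i\sim|x-y|^{-1/\alpha}$ yields an increment estimate
\[
\bE\big(|W(\go,x)-W(\go,y)|^2\big)+\sup_{z\in\cD}\bE\big(W(\go,z)^2\big)\le C|x-y|^{\rho}
\]
for some $\rho>0$ and $C$ depending only on $C_e,C_\eta,\alpha,\beta,d$; since truncation only discards nonnegative terms, the very same estimate holds for $W_N$ with the same constants, uniformly in $N$. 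Kolmogorov's continuity theorem then furnishes continuous modifications of $W$ and all $W_N$ (hence finiteness of the $L^\infty$-norms), and Fernique's theorem — applied to the Gaussian random elements $W,W_N\in C(\ol\cD)$ with Fernique constants controlled solely by the increment estimate above — gives $\bE\exp(\gl\|W\|_{L^\infty(\cD)})<+\infty$ and $\sup_{N\in\bN}\bE\exp(\gl\|W_N\|_{L^\infty(\cD)})<+\infty$ for every $\gl>0$. Inserting $\gl=q\phi_2$ into the pointwise bounds and using $(c_1+c_2+c_3)^q\le3^{q-1}(c_1^q+c_2^q+c_3^q)$ finishes the proof with a constant independent of $N$ and $\eps$ (depending on $q,\phi_1,\phi_2$ and on the fixed data $\ol P,\|\ol a\|_{L^\infty(\cD)},C_e,C_\eta,\alpha,\beta$; for $1/a_-$ and $1/a_{N,\eps,-}$ the relevant constant is just $\phi_1^{-1}$ times the $\gl=q\phi_2$ Fernique constant, as claimed).

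The step I expect to be the main obstacle is the uniformity in $N$ of the exponential-moment bound: one must verify that the covering-number / modulus-of-continuity quantities entering Fernique's (equivalently the Borell--TIS) inequality are governed purely by the $N$-independent increment estimate coming from Assumption~\ref{ass:EV2}(i), so that the constants do not deteriorate as $N\to\infty$; the remaining steps are routine manipulations of the explicit pointwise estimates.
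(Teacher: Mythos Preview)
Your argument is correct and follows the same overall reduction as the paper --- show measurability, then bound $1/a_-$ and $a_+$ pointwise by $\phi_1^{-1}\exp(\phi_2\|W\|_{L^\infty(\cD)})$ resp.\ $\|\ol a\|_{L^\infty(\cD)}+\phi_1\exp(\phi_2\|W\|_{L^\infty(\cD)})+\ol P$, and finally control the exponential moments of $\|W\|_{L^\infty(\cD)}$ and $\|W_N\|_{L^\infty(\cD)}$ uniformly in $N$. The difference lies in the last step. The paper invokes the Borell--TIS concentration inequality \cite[Theorem 2.1.1]{AT09} to get the Gaussian tail bound $\bP(\sup_{x}W(\cdot,x)-E\ge c)\le\exp(-c^2/(2\ol\sigma^2))$ and then computes $\bE\exp(q\phi_2\|W\|_{L^\infty(\cD)})$ explicitly via the layer-cake formula; uniformity in $N$ comes directly from $\ol\sigma_N^2\le\sum_{i=1}^N\eta_i\le\mathrm{tr}(Q)$. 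You instead establish a uniform increment bound from Assumption~\ref{ass:EV2}(i), apply Kolmogorov's theorem for continuous modifications, and then invoke Fernique's theorem with entropy/covering-number constants governed by that increment bound. Both routes are standard; the paper's is more elementary once Borell--TIS is quoted (only $\ol\sigma^2\le\mathrm{tr}(Q)$ is needed, no increment estimate), while yours is more self-contained in that it simultaneously produces the continuous modification and makes the dependence on the eigenpair decay parameters $C_e,C_\eta,\alpha,\beta$ explicit. Your concern about uniformity is well placed but resolvable exactly as you outline: since the canonical metric of $W_N$ is dominated by that of $W$ (truncation discards nonnegative terms), Dudley's entropy integral --- and hence $\bE\sup_x W_N(x)$ and the Fernique threshold --- are bounded uniformly in $N$.
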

\begin{proof}
	By Definition~\ref{def:a2}	
	\begin{equation*}
	a(\go,x)=\ol a(x)+\Phi(W(\go,x))+P(\go,x),
	\end{equation*}
	for random fields $W$ and $P$, hence the mapping $\go\mapsto a(\go,x)$ is $\cF-\cB(\bR)$-measurable for any $x\in\cD$.
	With this, the measurability of $b$ follows immediately.
	Since $\ol a$ and $P$ are nonnegative, and $\Phi\circ W(\cdot,x):\gO\to (0,+\infty)$ for all $x\in\cD$, we have that $a_-:\gO\to(0,+\infty)$.
	On the other hand, $\ol a$ and $P$ are bounded mappings by Definition~\ref{def:a2} and therefore $a_+(\go)<+\infty$ for all $\go\in\gO$.  
	For fixed parameters $N\in\bN$ and $\eps>0$, the assertion for $(a_{N,\eps},b_{N,\eps})$ follows analogously.
	By Remark~\ref{rem:measurability}, we also observe that $a_{N,\eps,-},a_{N,\eps,+}:\gO\to\bR$ are measurable mappings.
	To bound $\|1/a_-\|_{L^q(\gO;\bR)}$, we use that $W$ and $W_N$ are centered, almost surely bounded Gaussian random fields (\cite[Lemma 3.5]{BS18b}) on $\cD$ which implies 
	$E:=\bE(\sup_{x\in\cD} W(x))<+\infty$
	as well as
	\be\label{eq:tail2}
	\bP(\sup_{x\in\cD} W(\cdot,x)-E\ge c)\le \exp(-\frac{c^2}{2\ol\sigma^2})
	\ee
	for all $c>0$ and $\overline\sigma^2:=\sup_{x\in\cD}\bE(W(\cdot,x)^2)\le tr(Q)$.
	Furthermore, by the symmetry of $W$,
	\be\label{eq:W_symm}
	\bP(||W(x)||_{L^\infty(\cD)}>c)\le 2\bP(\sup_{x\in\cD} W(\cdot,x)>c).
	\ee
	With Assumption~\ref{ass:EV2}~(ii), and since $$||\exp(|W|)||_{L^\infty(\cD)}\le \exp(||W||_{L^\infty(\cD)}),$$ we then obtain for arbitrary $q\in[1,\infty)$
	\begin{align*}
	\bE(1/a_-^q)&\le\bE\big(\big(\inf_{x\in\cD}\Phi(W(\cdot,x)\big)^{-q}\big)\\
	&=\bE\big(\sup_{x\in\cD}\Phi(W(\cdot,x))^{-q}\big)\\
	&\le \frac{1}{\phi_1^q}\bE(\sup_{x\in\cD}\exp(q\phi_2|W(\cdot,x)|))\\
	&\le \frac{1}{\phi_1^q}\bE(\exp(q\phi_2||W||_{L^\infty(\cD)})).
	\end{align*}
	By Fubini's Theorem, integration by parts and Ineqs.~\eqref{eq:W_symm},~\eqref{eq:tail2} this yields
	\begin{align*}
	\bE(\exp(q\phi_2||W||_{L^\infty(\cD)}))&=\int_0^\infty q\phi_2 \exp(q\phi_2c)\bP(||W||_{L^\infty(\cD)}>c)dc\\
	&\le q\phi_2 \exp(q\phi_2E) + 2\int_E^\infty q\phi_2 \exp(q\phi_2c))\bP(\sup_{x\in\cD} W(\cdot,x)>c)dc\\
	&\le q\phi_2 \exp(q\phi_2E) + 2\int_E^\infty q\phi_2 \exp(q\phi_2c-\frac{1}{2\ol\gs^2}c^2)dc.
	\end{align*}  
	The last estimate on the right hand side is finite for each $q\in\bR$ which proves the claim for $a_-$. 
	To bound the expectation of $a_+$, we may proceed in the same way by noting that 
	\bee
	\|a_+\|_{L^q(\gO)}\le \|\ol a\|_{L^\infty(\cD)}+\bE\big(|\sup_{x\in\cD}\Phi(W(x))|^q\big)^{1/q}+ \ol P\le \|\ol a\|_{L^\infty(\cD)}+\phi_1\bE\big(\sup_{x\in\cD}\exp(q\phi_2|W(\cdot,x)|)\big)^{1/q}+\ol P
	\eee
	by Assumption~\ref{ass:EV2}~(ii).
	Analogously, the claim follows for $a_{N,\eps,-}, a_{N,\eps,+}$ with the same bounds from above as for $a_-,a_+$ respectively, because $$\overline\sigma_N^2:=\sup_{x\in\cD}\bE(W_N(x)^2)\le\sum_{i=1}^N\eta_i\le tr(Q).$$ 
\end{proof}

\begin{thm}\label{thm:exis2}
	Let Assumption~\ref{ass:EV2} hold and $N\in\bN$ and $\eps>0$ be fixed. There exist for any $\go\in\gO$ unique pathwise weak solutions $u(\go,\cdot,\cdot)\in L^2(\bT;V)$ to Problem~\eqref{eq:pde} and $u_{N,\eps}(\go,\cdot,\cdot)\in L^2(\bT;V)$ to Problem~\eqref{eq:pde_approx}, respectively.  
	Moreover, the mappings $u, u_{N,\eps}:\gO\to L^2(\bT;V)$ are strongly measurable and satisfy
	for any $r\in [1,p)$ the estimates
	\bee
	\bE\Big(\sup_{t\in\bT} \|u\|^r_{*,t}\Big)^{1/r},\;\;\bE\Big(\sup_{t\in\bT} \|u_{N,\eps}\|^r_{*,t}\Big)^{1/r} 
	\le C\Big(\|u_0\|_{L^p(\gO;H)}+\|f\|_{L^p(\gO;L^2(\bT;V'))}\Big),	
	\eee
	where $C=C(r,a,b,T)>0$ is independent of $N$ and $\eps$.
\end{thm}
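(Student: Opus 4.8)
The plan is to obtain all three assertions as consequences of Theorem~\ref{thm:exis} and Lemma~\ref{lem:a2}: the only work is to check that the pairs $(a,b)$ and $(a_{N,\eps},b_{N,\eps})$ satisfy Assumption~\ref{ass:para}, and then to track the dependence of the constants so that the resulting bounds are uniform in $N$ and $\eps$.

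First I would verify Assumption~\ref{ass:para} for each of the two coefficient pairs. Parts~(i) and~(ii) are precisely the content of Lemma~\ref{lem:a2}. Part~(iv) holds for $b$ by Remark~\ref{rem:advection}; since $b_{N,\eps}$ is defined by the same formula with $a$ replaced by $a_{N,\eps}$ and with the \emph{same} fields $\widetilde b_1,\widetilde b_2$, the identical argument gives part~(iv) for $b_{N,\eps}$ with the \emph{same} constants $\ol b_1,\ol b_2$; this coincidence is what will later produce the uniformity in $N$ and $\eps$. For part~(iii), given a target exponent $r\in[1,p)$ I would set $q:=(1/r-1/p)^{-1}\in[1,\infty)$, so that $1/p+1/q=1/r\le 1$; then $f\in L^p(\gO;L^2(\bT;V'))$ and $u_0\in L^p(\gO;H)$ by Assumption~\ref{ass:EV2}(iii), while $1/a_-$ and $1/a_{N,\eps,-}$ lie in $L^q(\gO;\bR)$ by Lemma~\ref{lem:a2}, with $\|1/a_-\|_{L^q(\gO;\bR)},\,\|1/a_{N,\eps,-}\|_{L^q(\gO;\bR)}\le C_q$ for a constant $C_q=C_q(q,\phi_1,\phi_2)$ that is independent of $N$ and $\eps$.

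Then I would apply Theorem~\ref{thm:exis} once to $(a,b)$ with the variational formulation~\eqref{eq:var2} and once to $(a_{N,\eps},b_{N,\eps})$ with~\eqref{eq:var2_approx}. This yields, for every $\go\in\gO$, existence and uniqueness of the pathwise weak solutions $u(\go,\cdot,\cdot),u_{N,\eps}(\go,\cdot,\cdot)\in L^2(\bT;V)\cap C(\bT;H)$, the strong measurability of $u,u_{N,\eps}:\gO\to L^2(\bT;V)$, and — for the $r$ and $q$ fixed above — the estimate
\bee
\bE\Big(\sup_{t\in\bT}\|u\|_{*,t}^r\Big)^{1/r}\le \widetilde C\big(1+\|1/a_-\|_{L^q(\gO;\bR)}\big)\big(\|u_0\|_{L^p(\gO;H)}+\|f\|_{L^p(\gO;L^2(\bT;V'))}\big),
\eee
together with the analogous inequality for $u_{N,\eps}$ and $a_{N,\eps,-}$ in place of $u$ and $a_-$, where by the statement of Theorem~\ref{thm:exis} one has $\widetilde C=\widetilde C(\ol b_1,\ol b_2,T,q,r)>0$.

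Finally I would collect the constant dependencies. The constant $\widetilde C$ enters the advection data only through $\ol b_1,\ol b_2$, which are the same for $b$ and $b_{N,\eps}$; $q$ is a function of $r$ and $p$ alone; and the $L^q(\gO;\bR)$-norms of $1/a_-$ and $1/a_{N,\eps,-}$ are dominated by the common, $N,\eps$-independent constant $C_q$ from Lemma~\ref{lem:a2}. Consequently both right-hand sides are bounded by $C(\|u_0\|_{L^p(\gO;H)}+\|f\|_{L^p(\gO;L^2(\bT;V'))})$ with a single constant $C=C(r,a,b,T)>0$ that does not depend on $N$ or $\eps$, which is the claim. I do not expect a genuine obstacle here: all the substance already resides in Theorem~\ref{thm:exis} and Lemma~\ref{lem:a2}, and the only point demanding care is precisely this last bookkeeping step — ensuring that the advection bounds $\ol b_1,\ol b_2$ feeding into Theorem~\ref{thm:exis} may be chosen identical for $b$ and $b_{N,\eps}$, and that Lemma~\ref{lem:a2} already supplies moment bounds for $1/a_{N,\eps,-}$ that are uniform in the approximation parameters, so that the constant does not degenerate as $N\to\infty$ or $\eps\to0$.
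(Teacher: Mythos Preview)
Your proposal is correct and follows essentially the same approach as the paper: verify Assumption~\ref{ass:para} for both coefficient pairs via Definition~\ref{def:a2}, Remark~\ref{rem:advection} and Lemma~\ref{lem:a2}, choose $q=(1/r-1/p)^{-1}$ for given $r\in[1,p)$, and then invoke Theorem~\ref{thm:exis}. Your write-up is in fact more explicit than the paper's (particularly in tracking why $\ol b_1,\ol b_2$ are common to $b$ and $b_{N,\eps}$, and why the constant is uniform in $N,\eps$), but the logical structure is identical.
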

\begin{proof} To apply Theorem~\ref{thm:exis}, we need to verify Assumption~\ref{ass:para}.
	By Definition~\ref{def:a2}, Remark~\ref{rem:advection} and Lemma~\ref{lem:a2}, we have already covered Assumption~\ref{ass:para}(i),~(ii) and~(iv) for $a, b$ and $a_{N,\eps}, b_{N,\eps}$.
	From Lemma~\ref{lem:a2} we further obtain $1/a_-,1/a_{N,\eps,-}\in L^q(\gO;\bR)$ for any $q\in[1,\infty)$ and that $\|1/a_{N,\eps,-}\|_{L^q(\gO;\bR)}$ is bounded uniformly with respect to $N$ and $\eps$.
	For given $r\in[1,p)$, we then choose $q=(1/r-1/p)^{-1}<+\infty$ and the claim follows by Assumption~\ref{ass:EV2}(iii) and Theorem~\ref{thm:exis}.
\end{proof}

Having shown the existence and uniqueness of the weak solutions $u$ and $u_{N,\eps}$, we may bound the difference between both solutions in the (expected) parabolic norm with respect to the parameters $N$ and $\eps$. 
For this, we record the following estimate on the approximation error $a-a_{N,\eps}$.

\begin{thm}\cite[Theorem 3.12]{BS18b}\label{thm:a_error2}
	Under Assumption~\ref{ass:EV2}, it holds that
	\bee
	\bE(\|a-a_{N,\eps}\|^s_{L^\infty(\cD)})^{1/s}\le C\left(\Xi_N^{1/2}+\eps^{1/s}\right),
	\eee
	where $\Xi_N:=\sum_{i>N}\eta_i$ and $C>0$ is independent of $N\in\bN$ and $\eps>0$.	
\end{thm}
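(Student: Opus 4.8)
The plan is to estimate $a-a_{N,\eps}$ term by term using the structure of Definition~\ref{def:a2} and Eq.~\eqref{eq:a_approx2}. The deterministic part $\ol a$ cancels, so pointwise in $\go$
\[
\|a(\go,\cdot)-a_{N,\eps}(\go,\cdot)\|_{L^\infty(\cD)}\le\|\Phi(W(\go,\cdot))-\Phi(W_N(\go,\cdot))\|_{L^\infty(\cD)}+\|P(\go,\cdot)-P_\eps(\go,\cdot)\|_{L^\infty(\cD)},
\]
and after taking $\bE(\cdot)^{1/s}$ and applying the triangle inequality in $L^s(\gO;\bR)$ it suffices to establish $\bE(\|\Phi(W)-\Phi(W_N)\|_{L^\infty(\cD)}^s)^{1/s}\le C\Xi_N^{1/2}$ and $\bE(\|P-P_\eps\|_{L^\infty(\cD)}^s)^{1/s}\le C\eps^{1/s}$.

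For the jump part I would use that, by Definition~\ref{def:a2}, the $\cT_i$ form a partition of $\cD$ into cells of positive measure, so for almost every $x$ exactly one indicator is nonzero and therefore $\|P(\go,\cdot)-P_\eps(\go,\cdot)\|_{L^\infty(\cD)}=\max_{1\le i\le\tau(\go)}|P_i(\go)-\widetilde P_i(\go)|$. Bounding the maximum by the sum, using that $(P_i)$ and $(\widetilde P_i)$ are independent of $\tau$, and Tonelli gives
\[
\bE\big(\|P-P_\eps\|_{L^\infty(\cD)}^s\big)\le\bE\Big(\sum_{i=1}^{\tau}|P_i-\widetilde P_i|^s\Big)=\sum_{i\ge1}\bP(\tau\ge i)\,\bE(|P_i-\widetilde P_i|^s)\le\eps\,\bE(\tau),
\]
by Assumption~\ref{ass:EV2}(iv); since $\bE(\tau)<\infty$ for the model of Definition~\ref{def:a2}, this yields $\bE(\|P-P_\eps\|_{L^\infty(\cD)}^s)^{1/s}\le C\eps^{1/s}$.

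For the Gaussian part, fix $\go$ and $x$; by the mean value theorem the difference $\Phi(W(\go,x))-\Phi(W_N(\go,x))$ equals $\Phi'(\xi)\,(W(\go,x)-W_N(\go,x))$ for an intermediate point $\xi$ between $W(\go,x)$ and $W_N(\go,x)$, and Assumption~\ref{ass:EV2}(ii) bounds $|\Phi'(\xi)|\le\phi_3\exp(\phi_4(\|W(\go,\cdot)\|_{L^\infty(\cD)}+\|W_N(\go,\cdot)\|_{L^\infty(\cD)}))$. Taking the supremum over $x$ and then Hölder's inequality on $(\gO,\cF,\bP)$ with exponents $\kappa>1$ and $\kappa'=\kappa/(\kappa-1)$ yields
\[
\bE\big(\|\Phi(W)-\Phi(W_N)\|_{L^\infty(\cD)}^s\big)^{1/s}\le\phi_3\,\bE\big(e^{s\kappa'\phi_4(\|W\|_{L^\infty(\cD)}+\|W_N\|_{L^\infty(\cD)})}\big)^{1/(s\kappa')}\,\bE\big(\|W-W_N\|_{L^\infty(\cD)}^{s\kappa}\big)^{1/(s\kappa)}.
\]
The exponential factor is finite and \emph{bounded uniformly in $N$}: $W$ and $W_N$ are centered, a.s.\ bounded Gaussian fields with $\sup_x\bE(W(\cdot,x)^2),\ \sup_x\bE(W_N(\cdot,x)^2)\le tr(Q)$, so Cauchy--Schwarz in $\gO$ and the Gaussian tail estimates \eqref{eq:W_symm}--\eqref{eq:tail2} (applied exactly as in the proof of Lemma~\ref{lem:a2}) control it by a constant depending only on $s,\kappa,\phi_4$ and $tr(Q)$. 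For the remaining factor, $W-W_N=\sum_{i>N}\sqrt{\eta_i}e_iZ_i$ is itself a centered Gaussian field, so the Borell--TIS concentration inequality bounds all of its $L^m(\gO)$-norms by
\[
\bE\big(\|W-W_N\|_{L^\infty(\cD)}^{m}\big)^{1/m}\le C_m\Big(\bE\|W-W_N\|_{L^\infty(\cD)}+\sup_{x\in\cD}\Big(\sum_{i>N}\eta_ie_i(x)^2\Big)^{1/2}\Big),
\]
and the weak standard deviation is $\le C_e\,\Xi_N^{1/2}$ by $\|e_i\|_{L^\infty(\cD)}\le C_e$ from Assumption~\ref{ass:EV2}(i). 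It then remains to show $\bE\|W-W_N\|_{L^\infty(\cD)}\le C\Xi_N^{1/2}$, which follows from the same regularity: either crudely from $\bE\|W-W_N\|_{L^\infty(\cD)}\le C_e\sqrt{2/\pi}\sum_{i>N}\sqrt{\eta_i}$ together with the eigenvalue decay $\sum_i\eta_ii^\gb\le C_\eta$, or more sharply from a chaining (Dudley) estimate for the canonical metric $d(x,y)^2=\sum_{i>N}\eta_i(e_i(x)-e_i(y))^2\le\sum_{i>N}\eta_i\min(4C_e^2,\ C_e^2i^{2\ga}|x-y|^2)$, splitting the series at $i\sim|x-y|^{-1}$ and invoking $\max_j\|\partial_{x_j}e_i\|_{L^\infty(\cD)}\le C_ei^\ga$. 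Collecting the two contributions and recalling $\Xi_N=\sum_{i>N}\eta_i$ proves the theorem.

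I expect this last step to be the main obstacle: extracting the precise rate $\Xi_N^{1/2}$ for $\bE\|W-W_N\|_{L^\infty(\cD)}$, and hence for its $L^{s\kappa}(\gO)$-norm, requires balancing the entropy/chaining estimate against the eigenvalue decay, since the eigenfunction derivatives are only controlled up to the algebraic factor $i^\ga$. The remaining ingredients — the term-by-term split, Hölder on $\gO$, the Fernique-type bound for the Gaussian exponential moments, and the partition argument for the jump term — are routine.
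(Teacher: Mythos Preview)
The paper does not give its own proof of this statement: Theorem~\ref{thm:a_error2} is quoted from \cite[Theorem~3.12]{BS18b} and is used as a black box, so there is no in-paper argument to compare your proposal against.

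That said, your outline has the right architecture and is very likely what the proof in \cite{BS18b} does: split $a-a_{N,\eps}$ into the Gaussian and jump contributions, control $\Phi(W)-\Phi(W_N)$ via the mean value theorem and a H\"older factorisation that separates the exponential weight (handled by the Fernique-type bounds already displayed in the proof of Lemma~\ref{lem:a2}) from $\|W-W_N\|_{L^\infty(\cD)}$, and finish with Borell--TIS to reduce all moments of the latter to its mean plus the weak variance $\le C_e^2\,\Xi_N$.

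Two points deserve a sharper look. First, your jump-term estimate uses $\bE(\tau)<\infty$; this is \emph{not} contained in Assumption~\ref{ass:EV2} as stated in the present paper (finiteness of moments of $\tau$ only enters later, in Assumption~\ref{ass:EV22}(v)), so you are implicitly importing an assumption from \cite{BS18b}. Second, your ``crude'' route $\bE\|W-W_N\|_{L^\infty(\cD)}\le C_e\sqrt{2/\pi}\sum_{i>N}\sqrt{\eta_i}$ does not deliver $\Xi_N^{1/2}$: Cauchy--Schwarz goes the wrong way, and the eigenvalue decay $\sum_i\eta_ii^\gb\le C_\eta$ with merely $\gb>0$ does not close the gap. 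Thus the chaining argument is genuinely needed, and even a straightforward Dudley bound with a Lipschitz canonical metric tends to produce $\Xi_N^{1/2}\sqrt{|\log\Xi_N|}$ rather than $\Xi_N^{1/2}$; obtaining the clean rate requires exploiting the interplay between $\|\partial_{x_j}e_i\|_{L^\infty(\cD)}\le C_ei^\ga$ and the summability $\sum_i\eta_ii^\gb<\infty$ more carefully than your sketch indicates. You correctly identify this as the main obstacle.
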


The final result of this section shows $u_{N,\eps}\to u$ in $L^r(\gO;L^2(T;V))$ as $N\to+\infty$ and $\eps\to0$.
\begin{thm}\label{thm:u_error2}
	Under Assumption~\ref{ass:EV2}, for any $r\in[1,(1/s+1/p)^{-1})$, the approximation error of $u$ is bounded in the parabolic norm by 
	\bee
	\bE\Big(\sup_{t\in\bT} \|u-u_{N,\eps}\|^r_{*,t}\Big)^{1/r} \le C\left(\Xi_N^{1/2}+\eps^{1/s}\right).	
	\eee
\end{thm}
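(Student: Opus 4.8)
The plan is to derive an energy estimate for the error $e_{N,\eps}:=u-u_{N,\eps}$ by subtracting the two variational formulations~\eqref{eq:var2} and~\eqref{eq:var2_approx}. Fix $\go\in\gO$. Both $u(\go,\cdot,\cdot)$ and $u_{N,\eps}(\go,\cdot,\cdot)$ solve parabolic problems with the \emph{same} right hand side $F_{\go,t}$ and the \emph{same} initial condition $u_0$, so the error satisfies $e_{N,\eps}(\go,\cdot,0)=0$ and, for all $v\in V$,
\bee
\dualpair{V'}{V}{\partial_t e_{N,\eps}(\go,\cdot,t)}{v}+B_\go(e_{N,\eps}(\go,\cdot,t),v)=\big(B^{N,\eps}_\go-B_\go\big)(u_{N,\eps}(\go,\cdot,t),v).
\eee
The right hand side is a consistency term: it involves $(a-a_{N,\eps})\nabla u_{N,\eps}\cdot\nabla v$ and $(b-b_{N,\eps})\cdot\nabla u_{N,\eps}\,v$, both of which can be bounded by $\|a-a_{N,\eps}\|_{L^\infty(\cD)}$ times norms of $u_{N,\eps}$ and $v$ — here one uses that $b$ is a $1$-Lipschitz (coordinatewise) function of $a$, since $|\min(a\widetilde b_1,\widetilde b_2)-\min(a_{N,\eps}\widetilde b_1,\widetilde b_2)|\le\|\widetilde b_1\|_{L^\infty(\cD)}|a-a_{N,\eps}|$, and the $L^\infty$-bound on $\widetilde b_2$ controls the zeroth-order term after a Poincar\'e step.

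Next I would test against $v=e_{N,\eps}(\go,\cdot,t)$, use $\dualpair{V'}{V}{\partial_t e_{N,\eps}}{e_{N,\eps}}=\tfrac12\tfrac{d}{dt}\|e_{N,\eps}\|_H^2$ as in the proof of Theorem~\ref{thm:exis}, and use coercivity of $B_\go$ (the diffusion term dominates the advection term by Assumption~\ref{ass:para}(iv), exactly the estimate on $I$ in that proof) to get
\bee
\frac12\frac{d}{dt}\|e_{N,\eps}(\go,\cdot,t)\|_H^2+\frac{a_-(\go)}{C}|e_{N,\eps}(\go,\cdot,t)|^2_{H^1(\cD)}\le \|a-a_{N,\eps}\|_{L^\infty(\cD)}\,\big(\|\nabla u_{N,\eps}\|_H+\|u_{N,\eps}\|_H\big)\,\|e_{N,\eps}\|_V+\text{(lower order)}.
\eee
Applying Young's inequality to split off $\tfrac{a_-}{C}|e_{N,\eps}|^2_{H^1}$ on the left and a zeroth-order term on the right, then Gr\"onwall in $t$ and integrating, yields a pathwise bound of the shape
\bee
\sup_{t\in\bT}\|e_{N,\eps}(\go,\cdot,\cdot)\|^2_{*,t}\le C\,\frac{\exp(C T/a_-(\go))}{a_-(\go)^2}\,\|a-a_{N,\eps}(\go,\cdot)\|^2_{L^\infty(\cD)}\,\sup_{t\in\bT}\|u_{N,\eps}(\go,\cdot,\cdot)\|^2_{*,t}\,,
\eee
(splitting into the cases $a_-(\go)\lessgtr1$ as in Theorem~\ref{thm:exis} to handle the small-$a_-$ regime). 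The dependence on $\go$ through $1/a_-(\go)$ — including the $\exp(CT/a_-)$ factor — is the quantity that must be controlled in expectation.

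The main obstacle is the final stochastic integrability step: I must take the $L^r(\gO)$-norm of a product of three $\go$-dependent factors, namely $\|a-a_{N,\eps}\|_{L^\infty(\cD)}$, $\sup_t\|u_{N,\eps}\|_{*,t}$, and a power of $1/a_-$ times $\exp(CT/a_-)$. By Theorem~\ref{thm:a_error2} the first has a finite $L^s(\gO)$-norm bounded by $C(\Xi_N^{1/2}+\eps^{1/s})$; by Theorem~\ref{thm:exis2} the second has a finite $L^p(\gO)$-norm uniformly in $N,\eps$; and by Lemma~\ref{lem:a2} the factor $1/a_-$ has finite $L^q(\gO)$-norms for \emph{every} $q<\infty$, and (re-running the Gaussian tail bound~\eqref{eq:tail2} in Lemma~\ref{lem:a2} with $\exp(\cdot)$ replaced by a doubly-exponential integrand, or simply absorbing $\exp(CT/a_-)\le C_\delta\exp(\delta/a_-)$ and noting $1/a_-$ has Gaussian-type tails so $\exp(\delta/a_-)$ has finite moments) the combined factor lies in $L^q(\gO)$ for all $q<\infty$. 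A triple H\"older inequality with exponents $s$, $p$, and the remaining conjugate exponent then closes the estimate for any $r<(1/s+1/p)^{-1}$: the slack between $(1/s+1/p)^{-1}$ and the true Hölder-admissible exponent is exactly what the $L^q$-for-all-$q$ property of the $a_-$-factor buys us, which is why the theorem requires the strict inequality $r<(1/s+1/p)^{-1}$. I would double-check that the lower-order (advection) consistency term, which carries a $\|\widetilde b_2\|_{L^\infty(\cD)}$ and no $a_-$ in the denominator, is in fact harmless — it only contributes a deterministic multiple of $\|a-a_{N,\eps}\|_{L^\infty(\cD)}\sup_t\|u_{N,\eps}\|_{*,t}$ and is dominated by the diffusion consistency term in the same Gr\"onwall argument.
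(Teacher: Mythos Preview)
Your overall strategy --- derive the error equation, test with the error, use coercivity plus Gr\"onwall, then close with a triple H\"older step --- is exactly the paper's approach, except that the paper packages the first three steps as a direct application of Theorem~\ref{thm:exis} to the auxiliary parabolic problem satisfied by $u-u_{N,\eps}$ with zero initial data and source $\widehat f(\go,\cdot,t)\in V'$ given by the consistency term. This is not merely cosmetic: invoking Theorem~\ref{thm:exis} gives you the Gr\"onwall constant for free, and crucially that constant is \emph{deterministic}.

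There is a genuine gap in your version. The factor $\exp(CT/a_-(\go))$ you produce should not be there: if you re-examine the energy estimate, the only zeroth-order term that feeds into Gr\"onwall comes from the advection contribution $(b\cdot\nabla e_{N,\eps},e_{N,\eps})$, and by Assumption~\ref{ass:para}(iv) this is bounded by $2^{d-1}\ol b_1\ol b_2\|e_{N,\eps}\|_H^2$ with a \emph{deterministic} prefactor --- exactly the estimate of term~$I$ in the proof of Theorem~\ref{thm:exis}, which uses both bounds $\|b\|_\infty\le\ol b_1 a$ and $\|b\|_\infty\le\ol b_2$ simultaneously. The random variable $1/a_-$ then enters only as a polynomial prefactor after Gr\"onwall, never in the exponential. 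Your subsequent attempt to control $\bE[\exp(\delta/a_-)]$ is therefore unnecessary, and in fact \emph{fails}: under Assumption~\ref{ass:EV2}(ii) one only has $1/a_-\le\phi_1^{-1}\exp(\phi_2\|W\|_{L^\infty(\cD)})$, so $1/a_-$ is of log-normal type, and for any $\delta>0$ the integrand $\exp(\delta e^{\phi_2 c})$ dominates the Gaussian tail $\exp(-c^2/(2\ol\sigma^2))$ from Ineq.~\eqref{eq:tail2} as $c\to\infty$, whence $\bE[\exp(\delta/a_-)]=+\infty$. Once you drop the spurious exponential (or simply quote Theorem~\ref{thm:exis} as the paper does), your H\"older step with exponents $s$, any $\bar r<p$ for $\sup_t\|u_{N,\eps}\|_{*,t}$ (Theorem~\ref{thm:exis2} gives $L^{\bar r}$, not $L^p$), and the leftover $q<\infty$ for the polynomial $1/a_-$-factor goes through and matches the paper's conclusion.
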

\begin{proof}
	By Theorem~\ref{thm:exis2}, pathwise existence of solutions $u$ and $u_{N,\eps}$ to the variational Problems~\eqref{eq:var2}, \eqref{eq:var2_approx} is guaranteed, hence for all $\go\in\gO, t\in\bT$ and $v\in V$
	\begin{align*}
	\dualpair{V'}{V}{\partial_t u(\go,\cdot,t)}{v}+B_\go(u(\go,\cdot,t),v)
	=\dualpair{V'}{V}{\partial_t u_{N,\eps}(\go,\cdot,t)}{v}+B_\go^{N,\eps}(u_{N,\eps}(\go,\cdot,t),v).
	\end{align*}
	This may be reformulated as the variational problem to find $u-u_{N,\eps}\in L^2(\bT;V)$ such that for all $t\in\bT$ and $v\in V$
	\begin{align*}
	\dualpair{V'}{V}{\partial_t (u(\go,\cdot,t)&-u_{N,\eps}(\go,\cdot,t))}{v}+B_\go(u(\go,\cdot,t)-u_{N,\eps}(\go,\cdot,t),v)\\
	&=((a_{N,\eps}-a)(\go,\cdot),\nabla u_{N,\eps}(\go,\cdot,t)\cdot\nabla v)+((b_{N,\eps}-b)(\go,\cdot)\cdot\nabla  u_{N,\eps}(\go,\cdot,t),v)\\
	&=:\dualpair{V'}{V}{\widehat f(\go,\cdot,t)}{v},
	\end{align*}
	with initial value $(u-u_{N,\eps})(\go,\cdot,0)\equiv0$.
	Definition~\ref{def:a2} and Remark~\ref{rem:advection} imply
	\bee
	\|\widehat f(\go,\cdot,\cdot)\|_{L^2(\bT;V')}\le (1+\ol b_1)\|(a-a_{N,\eps})(\go,\cdot)\|_{L^\infty(\cD)}\|\sum_{i=1}^d |\partial_{x_i}u_{N,\eps}(\go,\cdot,\cdot)|\|_{L^2(\bT;H)},
	\eee
	and by Ineq.~\eqref{eq:nabla_norm} and Theorem~\ref{thm:exis2} we know that for $\ol r\in[1,p)$ 
	\begin{align*}
	\|\sum_{i=1}^d |\partial_{x_i}u_{N,\eps}|\|_{L^{\ol r}(\gO;L^2(\bT;H))}
	&\le 2^{d/2-1/2}\bE\Big(\big(\int_0^T|u|_{H^1(\cD)}^2dt\big)^{\ol r/2}\Big)^{1/\ol r}\\
	&\le 2^{d/2-1/2}\bE\Big(\|u_{N,\eps}\|_{*,T}^{\ol r}\Big)^{1/\ol r}\\
	&\le C\Big(\|u_0\|_{L^p(\gO;H)}+\|f\|_{L^p(\gO;L^2(\bT;V'))}\Big)<+\infty.
	\end{align*}
	We may now choose $\ol p\in[1,(1/s+1/\ol r)^{-1}]$ and obtain by H\"older's inequality and Theorem~\ref{thm:a_error2}
	\begin{align*}
	\|\widehat f(\go,\cdot,\cdot)\|_{L^{\ol p}(\gO;L^2(\bT;V'))}
	&\le C\bE(\|a-a_{N,\eps}\|^s_{L^\infty(\cD)})^{1/s}\|\sum_{i=1}^d \partial_{x_i} u_{N,\eps}\|_{L^{\ol r}(\gO;L^2(\bT;H))}
	\le C\left(\Xi_N^{1/2}+\eps^{1/s}\right)
	\end{align*}
	for some $C>0$ independent of $N$ and $\eps$. 
	The claim now follows with Lemma~\ref{lem:a2} and by applying Theorem~\ref{thm:exis} on $u-u_{N,\eps}$ for $q=(1/r-1/s-1/\ol p)^{-1}< (1/r-1/s-1/p)^{-1}<+\infty$.
\end{proof}

To draw samples of $u_{N,\eps}$, we need to employ further numerical techniques since $u_{N,\eps}(\go,\cdot,\cdot)$ is an element of the infinite-dimensional Hilbert space $L^2(\bT;V)$. 
Hence, we have to find pathwise approximations of $u_{N,\eps}$ in finite-dimensional subspaces of $L^2(\bT;V)$ by discretizing the spatial and temporal domain.  
Next, we construct suitable approximation spaces of $V$, combine them with a time stepping method and control for the discretization error.

\section{Pathwise discretization schemes}\label{sec:fem2}
In the previous section we demonstrated that $u$ may be approximated by $u_{N,\eps}$ for sufficiently large $N\in\bN$ resp. small $\eps>0$. Nevertheless, even $u_{N,\eps}(\go,\cdot,\cdot)$ will in general not be accessible analytically for fixed $\go, N$ and $\eps$, thus we need to find pathwise finite-dimensional approximations of $u_{N,\eps}(\go,\cdot,\cdot)$. In the first part of this section we explain how a semi-discrete solution may be obtained by approximating $V$ with a sequence of \textit{sample-adapted} Finite Element (FE) spaces. By sample-adaptedness we mean that the FE mesh is aligned \textit{a-priori} with the discontinuities of $P$ in each sample, i.e. the grid changes with each $\go\in\gO$. This is in contrast to \textit{adaptive} FE schemes based on a-posteriori error estimates that may require several stages of remeshing in each sample, see e.g. \cite{DDS19, EMN16, KY18}.
We analyze the discretization error for the pathwise sample-adapted strategy and further emphasize its advantages compared to a standard, sample-independent FE basis. 
In the second part we combine the spatial discretization with a backward time stepping scheme in $\bT$, with the time step chosen accordingly to the sample-dependent FE basis. 
Finally, we derive the mean-square error between the unbiased solution $u$ and the fully discrete approximation of $u_{N,\eps}$.

\subsection{Sample-adapted spatial discretization}
To find approximations of $u_{N,\eps}(\go,\cdot,t)\in V$ for fixed $\go\in\gO$ and $t\in\bT$, we  use a standard Galerkin approach based on a sequence $\cV_\go=(V_\ell(\go),\ell\in\bN_0)$ of finite-dimensional and sample-dependent subspaces $V_\ell(\go)\subset V$.
An obvious choice for $V_\ell$ is the space of piecewise linear FE with respect to some triangulation of $\cD$.
We follow the same approach as in \cite{BS18b} and utilize path-dependent meshes to match the interfaces generated by the jump-diffusion and -advection coefficients:
For a given random partition $\cT(\go)=(\cT_i,i=1\dots,\tau(\go))$ of $\cD$, we choose a triangulation $\cK_{\ell}(\go)$ of $\cD$ such that 
\bee
\cT(\go)\subset\cK_{\ell}(\go)\quad\text{and}\quad h_{\ell}(\go) :=\max_{K\in\cK_{\ell}(\go)}\text{diam}(K)\le \overline h_\ell\quad\text{for $\ell\in\bN_0$.}
\eee
Above,  $\text{diam}(K)$ is the longest side length of the triangle $K$ and $(\overline h_\ell,\ell\in\bN_0)$ is a positive sequence of deterministic refinement thresholds, decreasing monotonically to zero. This guarantees that $h_\ell(\go)\to0$ almost surely, although the absolute speed of convergence may vary for each $\go$. 
Given that the $\cT$ splits the domain $\cD$ into a finite number of piecewise linear polygons (see Assumption~\ref{ass:EV22} below), such a triangulation $\cK_\ell$ with $\cT(\go)\subset\cK_{\ell}(\go)$ always exists for any prescribed refinement $\ol h_\ell>0$.
Consequently, $V_\ell(\go)$ is chosen as the space of continuous, piecewise linear functions with respect to $K_\ell(\go)$, i.e. 
\be\label{eq:FE_space}
V_\ell(\go):=\{v_{\ell,\go}\in C^0(\ol\cD)\big|\;v_{\ell,\go}|_{\partial\cD}=0\;\text{and}\;v_{\ell,\go}|_K\in \cP_1(K),\, K\in \cK_\ell(\go)\}\subset V.
\ee
The set $\cP_1(K)$ denotes the space of all linear polynomials on the triangle $K$, and $\{v_{1,\go},\dots,v_{d_\ell(\go),\go}\}$ is the nodal basis of $V_\ell(\go)$ that corresponds to the vertices in $\cK_\ell(\omega)$.
As discussed in \cite[Section 4]{BS18b}, the adjustment of $\cK_\ell$ to the discontinuities of $a$ and $b$ accelerates convergence of the spatial discretization compared to a fixed, non-adapted FE approach. 
For a fixed triangle $K\in\cK_\ell(\go)$ let $x_1^K,x_2^K,x_3^K\in\ol\cD$ denote the corner points of $K$ and let $v_1^K, v_2^K, v_3^K\in\cP_1(K)$ be the corresponding linear nodal basis.
We define the \textit{local interpolation operator} on $K$ by
\bee
\cI(K):C^0(\ol K)\to \cP_1(K),\quad v\mapsto \sum_{i=1}^3 v(x_i^K)v_{i,\go}^K. 
\eee 
The \textit{global interpolation operator} $\cI_\ell:C^0(\ol\cD)\to V_\ell(\go)$ with respect to $\cK_\ell(\go)$ is then given by restrictions to the local operators, that is
\bee
[\cI_\ell v](x):=[\cI(K)v](x),\quad\text{for $x\in\cD$ and $K\in\cK_\ell$ is such that $x\in\ol K$.} 
\eee 
For simplicity, we only consider the nodal interpolation of continuous functions $v\in C^0(\ol\cD)$.

The semi-discrete version of Problem~\eqref{eq:var2_approx} is then to find $u_{N,\eps,\ell}(\go,\cdot,\cdot)\in L^2(\bT;V_\ell(\go))$ with $\partial_t u_{N,\eps,\ell}(\go,\cdot,\cdot)\in L^2(\bT;(V_\ell(\go))')$ such that for $t\in\bT$ and all $v_{\ell,\go}\in V_\ell(\go)$
\be\label{eq:semi_var}
\begin{split}
	\dualpair{V'}{V}{\partial_t u_{N,\eps,\ell}(\go,\cdot,t)}{v_{\ell,\go}}
	+B^{N,\eps}_\go(u_{N,\eps,\ell}(\go,\cdot,t),v_{\ell,\go})
	&=F_{t,\go}(v_{\ell,\go}),\\
	u_{N,\eps,\ell}(\go,\cdot,0)&=\cI_\ell u_0( \go,\cdot).
\end{split}
\ee
We have used the nodal interpolation $\cI_\ell u_0$ as approximation of the initial value, which is well-defined if $u_0(\go,\cdot)\in C^0(\ol\cD)$ holds for any $\go$ (see also Assumption~\ref{ass:EV22}(iii)/Remark~\ref{rem:EV_decay}).
The function $u_{N,\eps,\ell}(\go,\cdot,t)$ may be expanded with respect to the basis $\{v_{1,\go},\dots,v_{d_\ell(\go),\go}\}$ as
\be\label{eq:u_l_exp}
u_{N,\eps,\ell}(\go,x,t)=\sum_{j=1}^{d_\ell(\go)}c_j(\go,t)v_{j,\go}(x),
\ee
where the coefficients $c_1(\go,t),\dots,c_{d_\ell(\omega)}(\go,t)\in\bR$ depend on $(\go,t)\in\gO\times\bT$ and the respective coefficient column-vector is defined as
${\bf c(\go,t)} := (c_1(\go,t),\dots,c_{d_\ell(\omega)}(\go,t))^T$. With this, the semi-discrete variational problem in the finite-dimensional space $V_\ell(\go)$ is equivalent to solving the system of ordinary differential equations
\bee
\frac{d}{dt}\bf c(\go,t)+ \mathbf A(\go) {\bf c(\go,t)}
=\mathbf F(\go,t),\quad t\in\bT,
\eee
for $\bf c$ with stochastic stiffness matrix $(\mathbf A(\go))_{jk}=B^{N,\eps}_\go(v_{j,\go},v_{k,\go})$ and time-dependent load vector $(\mathbf F(\go,t))_j=F_{t,\go}(v_{j,\go})$ for $j,k\in\{1,\dots,d_\ell(\go)\}$.
To ensure the well-posedness of Eq.~\eqref{eq:semi_var} and derive error bounds of the numerical approximation of $u$ in a mean-square sense, we need to modify Assumption~\ref{ass:EV2} ((ii) and (iv) are unaltered):

\begin{assumption}\label{ass:EV22}
	~
	\begin{enumerate}[label=(\roman*)]
		\item The eigenfunctions $e_i$ of $Q$ are continuously differentiable on $\cD$ and there exist constants $\ga,\gb,C_e,C_\eta>0$ such that $2\ga\le\beta$ and for any $i\in\bN$
		\bee
		\|e_i\|_{L^\infty(\cD)}\le C_e,\quad\max_{j=1,\dots,d}\|\partial_{x_j} e_i\|_{L^\infty(\cD)}\le C_ei^\alpha\quad\text{and}\quad\sum_{i=1}^\infty\eta_ii^\gb\leq C_\eta<+\infty.
		\eee 
		\item Furthermore, the mapping $\Phi$ as in Definition~\ref{def:a2} and its derivative are bounded for $w\in\bR$ by
		\bee
		\phi_1\exp(\phi_2 w)\ge \Phi(w)\ge\phi_1\exp(-\phi_2 w),\quad|\frac{d}{dx}\Phi(w)|\le\phi_3\exp(\phi_4|w|),
		\eee
		where $\phi_1,\dots,\phi_4>0$ are arbitrary constants.
		\item There exists $p>2$ such that $f,\partial_t f\in L^p(\gO;L^2(\bT;H))$ and $u_0\in L^p(\gO;V)\cap L^p(\gO;H^{1+\epsilon}(\cD))$ for some arbitrary $\epsilon>0$. Furthermore, $u_0$ and $f$ are stochastically independent of $\cT$. 
		\item The sequence $(P_i, i\in\bN)$ consists of nonnegative and bounded random variables $P_i\in [0,\ol P]$ for some $\ol P>0$. In addition, for $s>2$ such that $1/p+1/s<1/2$ there exists a sequence of approximations $(\widetilde P_i, i\in\bN)\subset[0,\ol P]^\bN$ so that the sampling error is bounded, for some $\eps>0$, by
		\begin{equation*}
		\bE(|\widetilde P_i-P_i|^s)\le\eps,\quad i\in\bN.
		\end{equation*}
		\item The partition elements $\cT_i(\go)$ are \textit{polygons with piecewise linear boundary} and a finite number of boundary edges for all $\go\in\gO$ and $\bE(\tau^n)<\infty$ for any $n\in\bN$.
		\item Let $2^V$ be the power set of $V$. For all $\ell\in\bN_0$, the correspondence $\gO\to 2^V,\;\go\mapsto V_\ell(\go)$ admits non-empty values and is \textit{weakly measurable}: for each open subset $\widetilde V\subset V$ it holds that
		\bee
		\{\go\in\gO|\;V_\ell(\go)\cap \widetilde V\neq\emptyset\}\in\cF.
		\eee 
		\item \textit{Conformity}: In dimension $d=2$, let $K_1,K_2\in\cK_\ell(\go)$ for some fixed $\ell\in\bN_0$ and $\go\in\gO$. Then, the intersection $\ol K_1\cap \ol K_2$ is either empty, a common edge or a common vertex of $\cK_\ell(\go)$.
		\item \textit{Shape-regularity}: Let $\rho_{K,out}$ and $\rho_{K,in}$ denote the radius of the outer respectively inner circle of the triangle $K$. Then, there is a constant $\ol\rho>0$ such that 
		\bee
		\esssup_{\go\in\gO}\;\sup_{\ell\in\bN_0}\;\sup_{K\in\cK_{\ell}(\go)}\frac{\rho_{K,out}}{\rho_{K,in}}\le \ol\rho<+\infty.
		\eee	
	\end{enumerate}
\end{assumption}

\begin{rem}\label{rem:EV_decay}
	We discuss Assumption~\ref{ass:EV22} in the following:
	\begin{itemize}
		\item Assumption~\ref{ass:EV22}(i) implies for all $i=1,\dots,d$ and $x\in\cD$ 
		\bee
		\bE(|\partial_{x_i} W_N(x)|^2)=\bE(|\sum_{j=1}^n \sqrt{\eta_j}\partial_{x_i} e_j(x)_j Z_j|^2)\le C_e\sum_{j=1}^N\eta_j j^{2\ga}\le C_e\sum_{j=1}^N\eta_j j^{\gb},
		\eee
		hence there exist an $L^2(\gO;\bR)$-limit $\partial_{x_i}W(\cdot,x):=\lim_{N\to+\infty}\partial_{x_i} W_N(\cdot,x)$. 
		Hence, $2\ga\le\gb$ entails the mean-square differentiability (or pathwise Lipschitz-continuity) of the Gaussian field $W$. 
		\item By the fractional Sobolev inequality (\cite[Theorem 6.7]{DGV12}), $u_0(\go,\cdot)\in H^{1+\epsilon}(\cD)$ for $\epsilon>0$ implies with $d\le 2$ that $u_0(\go,\cdot)\in C^0(\ol\cD)$ and the nodal interpolation of $u_0$ is well-defined. The assumptions on $f$ and $\partial_tf$ are necessary to control the error of a temporal discretization scheme. 
		The nodal basis functions $v_{j,\go}$ are solely determined by $\cT(\go)$ and since $f, u_0$ are stochastically independent of $\cT$, we may expand the sample-adapted semi-discrete solution via Eq.~\eqref{eq:u_l_exp}, i.e. obtain a separation of spatial and temporal variables.
		\item The condition $1/p+1/s<1/2$ enables us to derive all errors in a mean-square sense. Furthermore, the partition into piecewise linear polygons enables us to construct triangulations $\cK_\ell(\go)$ resp. approximation spaces $V_\ell(\go)$ as in Eq.~\eqref{eq:FE_space}
		\item The weak measurability of the correspondence $\go\mapsto V_\ell(\go)$ ensures the (strong) measurability of the approximated solution $u_{N,\eps,\ell}:\gO\to L^2(\bT;V)$, see Proposition~\ref{prop:meas2}. 
		This assumption is necessary, since pathological approximation spaces $V_\ell(\go)$ may still be constructed on a nullset of $\gO$, even under Assumption~\ref{ass:EV22}(iv).
		For details on measurable correspondences we refer to \cite[Chapter 18]{AB06}.
		\item Conformity and shape-regularity of the FE triangulations are necessary to control the FE discretization error.
	\end{itemize}
\end{rem}

We show measurability of the semi-discrete approximations and record a bound on the interpolation error. 

\begin{prop}\label{prop:meas2}
	Let Assumption~\ref{ass:EV22} hold and let $\ell\in\bN_0$ be fixed. Then, for any $\go\in\gO$ there exists a unique sample-adapted solution $u_{N,\eps,\ell}(\go,\cdot,\cdot)\in L^2(\bT;V)$ to the semi-discrete problem~\eqref{eq:semi_var} and the mapping $u_{N,\eps,\ell}:\gO\to L^2(\bT;V)$ is strongly measurable. 
\end{prop}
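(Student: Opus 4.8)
The plan is to split the statement into its two claims---pathwise existence/uniqueness of $u_{N,\eps,\ell}(\go,\cdot,\cdot)$ and strong measurability of the map $\go\mapsto u_{N,\eps,\ell}$---and treat them in that order. For the existence part, I would argue pathwise: fix $\go\in\gO$. By Lemma~\ref{lem:a2}, the approximated coefficients $(a_{N,\eps},b_{N,\eps})$ satisfy Assumption~\ref{ass:para}(i),(ii), and by Remark~\ref{rem:advection} also (iv); together with Assumption~\ref{ass:EV22}(ii) this makes the bilinear form $B^{N,\eps}_\go$ continuous and coercive on $V\times V$, hence a fortiori on the finite-dimensional subspace $V_\ell(\go)\subset V$. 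Assumption~\ref{ass:EV22}(v),(vii),(viii) guarantee that the triangulation $\cK_\ell(\go)$ with $\cT(\go)\subset\cK_\ell(\go)$ exists and is conforming and shape-regular, so $V_\ell(\go)$ is a genuine $d_\ell(\go)$-dimensional subspace with nodal basis, and $\cI_\ell u_0(\go,\cdot)\in V_\ell(\go)$ is well-defined since $u_0(\go,\cdot)\in H^{1+\epsilon}(\cD)\hookrightarrow C^0(\ol\cD)$ by Assumption~\ref{ass:EV22}(iii) and the fractional Sobolev embedding (Remark~\ref{rem:EV_decay}). The semi-discrete problem~\eqref{eq:semi_var} is then, via the expansion~\eqref{eq:u_l_exp}, equivalent to the linear ODE system $\tfrac{d}{dt}\mathbf{c}(\go,t)+\mathbf{A}(\go)\mathbf{c}(\go,t)=\mathbf{F}(\go,t)$ with initial datum $\mathbf{c}(\go,0)$ given by the nodal values of $u_0(\go,\cdot)$; since $f(\go,\cdot,\cdot)\in L^2(\bT;H)$ (Assumption~\ref{ass:EV22}(iii)) the load vector $\mathbf{F}(\go,\cdot)$ lies in $L^2(\bT;\bR^{d_\ell(\go)})$, the mass matrix is invertible, and the standard variation-of-constants / Picard argument gives a unique absolutely continuous solution $\mathbf{c}(\go,\cdot)$, hence a unique $u_{N,\eps,\ell}(\go,\cdot,\cdot)\in L^2(\bT;V_\ell(\go))\subset L^2(\bT;V)$ with weak time derivative in $L^2(\bT;(V_\ell(\go))')$. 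One can alternatively just cite the deterministic semi-discrete parabolic theory (e.g.~\cite[Chapter 7.1]{E10} or~\cite[Chapter 11]{QV97}) exactly as in the proof of Theorem~\ref{thm:exis}.

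The measurability part is the main obstacle, and here the relevant hypothesis is the weak measurability of the correspondence $\go\mapsto V_\ell(\go)$ in Assumption~\ref{ass:EV22}(vi). The plan is to mimic the measurable-selection argument from the proof of Theorem~\ref{thm:exis}, but now relative to the moving target $V_\ell(\go)$. Concretely, I would introduce the product space $\cX:=L^2(\bT;V)\times L^2(\bT;V')$ as before, and for a fixed countable dense sequence $(v_i,i\in\bN)\subset V$ (chosen so that, for each $\go$, its ``trace'' onto $V_\ell(\go)$ spans $V_\ell(\go)$---or more carefully, work with a countable family of basis-constructions) define Carath\'eodory functionals encoding the variational identity~\eqref{eq:semi_var} together with the constraint $w(\cdot,t)\in V_\ell(\go)$ and $w(\cdot,0)=\cI_\ell u_0(\go,\cdot)$. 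The graph of the solution correspondence $\go\mapsto\{w\in\cX : w \text{ solves \eqref{eq:semi_var} for this }\go\}$ should be shown to lie in $\cF\otimes\cB(\cX)$: the ``variational identity'' part is handled as in Theorem~\ref{thm:exis} via~\cite[Corollary 18.8]{AB06}, while the membership constraint $w(\cdot,t)\in V_\ell(\go)$ for a.e.\ $t$ is where Assumption~\ref{ass:EV22}(vi) enters, since weak measurability of $\go\mapsto V_\ell(\go)$ means the graph $\{(\go,v):v\in V_\ell(\go)\}$ is $\cF\otimes\cB(V)$-measurable (again~\cite[Chapter 18]{AB06}), and one lifts this to a measurable constraint on $w\in L^2(\bT;V)$ via a Fubini/section argument over $\bT$. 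Since the solution is \emph{unique}, the solution correspondence is single-valued, so by the measurable-selection / projection theorem (\cite[Theorem 18.25]{AB06}) the map $\go\mapsto(u_{N,\eps,\ell}(\go,\cdot,\cdot),\partial_t u_{N,\eps,\ell}(\go,\cdot,\cdot))$ is $\cF$--$\cB(\cX)$-measurable, and projecting onto the first factor (using $\cB(\cX)=\cB(L^2(\bT;V))\otimes\cB(L^2(\bT;V'))$ as in Theorem~\ref{thm:exis}) yields strong measurability of $u_{N,\eps,\ell}:\gO\to L^2(\bT;V)$.

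I also expect it cleaner to route through the coefficient vector $\mathbf{c}(\go,\cdot)$: show that for each $\ell$ one may arrange a measurable enumeration of the vertices of $\cK_\ell(\go)$ (so that $d_\ell(\go)$ is a measurable $\bN$-valued random variable and the nodal basis functions $v_{j,\go}$ depend measurably on $\go$), deduce that the stiffness matrix $\mathbf{A}(\go)$, mass matrix, and load vector $\mathbf{F}(\go,\cdot)$ are measurable in $\go$, and then invoke measurable dependence of the ODE solution on measurable parameters to conclude $\go\mapsto\mathbf{c}(\go,\cdot)$ is measurable; strong measurability of $u_{N,\eps,\ell}$ then follows from~\eqref{eq:u_l_exp}. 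The subtle point in this alternative is the measurable enumeration of vertices, which is exactly the content of Assumption~\ref{ass:EV22}(vi) (the weak measurability of $\go\mapsto V_\ell(\go)$)---without it pathological triangulations on a null set could spoil measurability, as flagged in Remark~\ref{rem:EV_decay}. Either way, the technical heart of the proof is handling the $\go$-dependence of the approximation space, and the remaining ODE well-posedness and projection steps are routine.
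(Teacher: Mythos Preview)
Your proposal is correct and follows essentially the same architecture as the paper: pathwise existence via the deterministic theory (or equivalently the ODE system), and measurability via Carath\'eodory functionals on $\cX=L^2(\bT;V)\times L^2(\bT;V')$ together with the measurable-selection machinery of \cite[Chapter~18]{AB06}.

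The one place where the paper is more concrete than your sketch is in handling the $\go$-dependent test space. You propose either a ``Fubini/section argument'' to enforce the membership constraint $w(\cdot,t)\in V_\ell(\go)$, or routing through a measurable enumeration of vertices. The paper instead invokes the \emph{Castaing representation} \cite[Corollary~18.14]{AB06}: weak measurability of the closed-valued correspondence $\go\mapsto V_\ell(\go)$ yields a sequence of measurable selectors $\xi_i:\gO\to V$ with $V_\ell(\go)=\overline{\{\xi_1(\go),\xi_2(\go),\dots\}}$, from which one builds a measurable (extended) basis $(\widetilde v_{i,\go})_{i\in\bN}$. The paper then encodes \emph{both} the variational identity and the subspace constraint into a single Carath\'eodory functional
\[
\widetilde J^{N,\eps}_i(\go,w)=\int_0^T\Big[B^{N,\eps}_\go(w,\widetilde v_{i,\go})-F_{\go,t}(\widetilde v_{i,\go})+\dualpair{V'}{V}{\partial_t w}{\widetilde v_{i,\go}}+\big\|w-\textstyle\sum_j(w,\widetilde v_{j,\go})_V\,\widetilde v_{j,\go}\big\|_V\Big]dt,
\]
where the last term is the distance to the Galerkin projection and vanishes precisely when $w(\cdot,t)\in V_\ell(\go)$. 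This avoids any separate section argument and makes the Carath\'eodory property immediate from the measurability of $\widetilde v_{i,\cdot}$. Your ODE alternative would also work, but the ``measurable enumeration of vertices'' you flag as the subtle point is exactly what the Castaing theorem delivers for free at the level of $V_\ell(\go)$ itself.
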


\begin{proof}
	For fixed $\go$, existence and uniqueness of $u_{N,\eps,\ell}(\go,\cdot,\cdot)$ follows with Assumption~\ref{ass:EV22} as in Theorem~\ref{thm:exis}, hence the map $u_{N,\eps,\ell}:\gO\to L^2(\bT; V)$ is well-defined.
	To show measurability, we use again the space $\cX:=L^2(\bT;V)\times L^2(\bT;V')$ with $\|(y_1,y_2)\|_\cX:=\|y_1\|_{L^2(\bT;V)}+\|y_2\|_{L^2(\bT;V')}$ as in the proof of Theorem~\ref{thm:exis}. 
	Let $\{v_{1,\go},\dots,v_{d_\ell(\go),\go}\}$ be a basis of $V_\ell(\go)$ and define the sequence 
	\bee
	\widetilde v_{i,\go}:=\begin{cases} v_{i,\go}\quad&\text{if $i\le d_\ell(\go)$}\\
		v_{d_\ell(\go),\go}\quad&\text{if $i> d_\ell(\go)$}\end{cases}.
	\eee
	By Assumption~\ref{ass:EV22}(vi), the correspondence $\go\mapsto V_\ell(\go)$ is weakly measurable and has closed, non-empty values, therefore there exists a sequence $(\xi_i,i\in\bN)$ of measurable functions $\xi_i:\gO\to V$ such that $\xi_i(\go)\in V_\ell(\go)$ and $V_\ell(\go)=\overline{\{\xi_1(\go),\xi_2(\go),\dots\}}$ (see \cite[Corollary 18.14]{AB06}). Consequently, each $\widetilde v_{i,\cdot}:\gO\to V$ can be written as the limit of measurable functions and is therefore $\cF-\cB(V)$-measurable.    
	Now, consider the functional 
	\begin{align*}
	\widetilde J^{N,\eps}_i:\gO\times \cX\to\bR,\quad (\go,w)\mapsto &\int_0^TB^{N,\eps}_\go(w(\cdot,t),\widetilde v_{i,\go})-F_{\go,t}(\widetilde v_{i,\go})+\dualpair{V'}{V}{\partial_t w(\cdot,t)}{\widetilde v_{i,\go}}\\
	&\quad+\|w(\cdot,t)-\sum_{i=1}^{d_\ell(\go)}(w(\cdot,t),\widetilde v_{i,\go})_V\widetilde v_{i,\go}\|_Vdt
	\end{align*}
	By Theorem~\ref{thm:exis} and the measurability of $\widetilde v_{i,\cdot}$ we conclude that $J^{N,\eps}_i$ is a Carath\'eodory mapping. 
	We define the correspondence 
	\bee
	\widetilde \varphi_i(\go):=\{w\in \cX|\, J^{N,\eps}_i(\go,w)=0\}.
	\eee
	and obtain again by \cite[Corollary 18.8]{AB06} that the graph $\text{Gr}(\widetilde\varphi_i)=\{(\go,w)\in\gO\times \cX|\, w\in\varphi_i(\go)\}$ is measurable.
	By construction of $J^{N,\eps}_i$, we get
	\bee
	\{(\go,u_{N,\eps,\ell}(\go,\cdot,\cdot),\partial_tu_{N,\eps,\ell}(\go,\cdot,\cdot))|\,\go\in\gO\}=\bigcap_{i\in\bN}\text{Gr}(\widetilde \varphi_i)\in\cF\otimes\cB(\cX),
	\eee 
	and the claimed measurability of $u_{N,\eps,\ell}$ follows analogously as in the proof of Theorem~\ref{thm:exis}.
\end{proof}

\begin{lem}\label{lem:interpolation}
	Under Assumption~\ref{ass:EV22}, let $\go\in\gO$ be fixed, and let $v\in H^{\vartheta}(\cT_i)$ for some $\vartheta\in (1,2]$ and $i=1,\dots,\tau(\go)$. Then, $\cI_\ell v\in C^0(\ol\cT_i)$ is well-defined on each partition element $\cT_i$ and for $m\in\{0,1\}$ there holds 
	\be\label{eq:int_err} 
	\big(\sum_{i=1}^{\tau(\go)}\|(1-\cI_\ell)v\|^2_{H^m(\cT_i)}\big)^{1/2}
	=\big(\sum_{i=1}^{\tau(\go)}\sum_{K\in\cT_i}\|(1-\cI(K))v\|^2_{H^m(K)}\big)^{1/2}
	\le C\ol h_\ell^{\vartheta-m}\big(\sum_{i=1}^{\tau(\go)}|v|_{H^{\vartheta}(\cT_i)}\big)^{1/2},
	\ee
	where $C=C(\ol\rho,\vartheta,m,d)>0$ is a deterministic constant.
\end{lem}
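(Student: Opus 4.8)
The plan is to reduce the global interpolation estimate to a sum of local estimates on individual triangles, and then invoke the classical Bramble--Hilbert/Deny--Lions interpolation theory on each reference element. First I would note that since $\cT(\go)\subset\cK_\ell(\go)$, each partition element $\cT_i$ is exactly a union of triangles $K\in\cK_\ell(\go)$, so the restriction $v|_{\cT_i}\in H^\vartheta(\cT_i)$ implies $v|_K\in H^\vartheta(K)$ for each such $K$, and since $\vartheta>1\ge d/2$ in dimensions $d\le 2$, the fractional Sobolev embedding gives $v|_K\in C^0(\ol K)$, so the local (and hence global) nodal interpolant is well-defined and $\cI_\ell v|_K=\cI(K)v$. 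This justifies the first equality in~\eqref{eq:int_err}, since $H^m$-norms are additive over the triangulation and $\cI_\ell$ agrees with $\cI(K)$ on each $K$.

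Next I would establish the local estimate: for a single triangle $K$ with diameter $h_K\le \ol h_\ell$ that is shape-regular with ratio bounded by $\ol\rho$, one has for $m\in\{0,1\}$ and $\vartheta\in(1,2]$
\be
\|(1-\cI(K))v\|_{H^m(K)}\le C(\ol\rho,\vartheta,m,d)\, h_K^{\vartheta-m}\,|v|_{H^\vartheta(K)}.
\ee
This is the standard scaling argument: pull back to a reference triangle $\widehat K$ via the affine map, use that $\cI$ reproduces linear polynomials (hence $P_1\subset\ker(1-\cI(\widehat K))$) together with the Bramble--Hilbert lemma on $\widehat K$ to bound $\|(1-\cI(\widehat K))\widehat v\|_{H^m(\widehat K)}\le C|\widehat v|_{H^\vartheta(\widehat K)}$, and then transform back, tracking the powers of $h_K$ and the Jacobian; shape-regularity controls the condition number of the affine map so that all constants depend only on $\ol\rho,\vartheta,m,d$. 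Since $\vartheta$ may be fractional I would cite the fractional version of these estimates (as in, e.g., the Gagliardo-seminorm framework of~\cite{DGV12} or standard FEM references); this is where a little care is needed but it is classical.

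Then I would assemble: squaring and summing the local bounds over all $K$, using $h_K\le\ol h_\ell$ and $\ol h_\ell^{2(\vartheta-m)}$ as a common factor (valid since $\vartheta-m\ge 0$), gives
\be
\sum_{i=1}^{\tau(\go)}\sum_{K\in\cT_i}\|(1-\cI(K))v\|^2_{H^m(K)}
\le C^2\ol h_\ell^{2(\vartheta-m)}\sum_{i=1}^{\tau(\go)}\sum_{K\in\cT_i}|v|^2_{H^\vartheta(K)}.
\ee
Finally, I would bound the sum of local seminorms by the sum of the partition-element seminorms; the subtlety is that for fractional $\vartheta$ the Gagliardo seminorm is not simply additive over a subdivision, but the localized contributions satisfy $\sum_{K\in\cT_i}|v|^2_{H^\vartheta(K)}\le |v|^2_{H^\vartheta(\cT_i)}$ because the double integral defining $|v|^2_{H^\vartheta(K)}$ is over $K\times K\subset\cT_i\times\cT_i$ and these pieces are disjoint. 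Taking square roots yields the claimed bound with $\big(\sum_i|v|_{H^\vartheta(\cT_i)}\big)^{1/2}$ on the right (matching the exponent convention in the statement). The main obstacle is the careful handling of the fractional-order interpolation estimate on the reference element and the non-additivity of the Gagliardo seminorm under mesh refinement; everything else is routine affine-scaling bookkeeping, with shape-regularity (Assumption~\ref{ass:EV22}(viii)) ensuring uniformity of constants across all samples $\go$ and levels $\ell$.
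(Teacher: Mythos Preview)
Your argument is correct and follows the classical ``direct'' route: element-wise Bramble--Hilbert on the reference triangle, affine scaling under shape-regularity, then summation with the sub-additivity $\sum_{K\subset\cT_i}|v|^2_{H^\vartheta(K)}\le |v|^2_{H^\vartheta(\cT_i)}$ for the Gagliardo seminorm. The paper takes a slightly different path: it first establishes $H^m$-stability of $\cI_\ell$ (citing \cite[Theorem~4.4.20]{BS07}), combines this with the projection property $\cI_\ell w=w$ for $w\in V_\ell(\go)$ to reduce to a best-approximation bound $\|(1-\cI_\ell)v\|_{H^m(\cT_i)}\le C\inf_{w\in V_\ell(\go)}\|v-w\|_{H^m(\cT_i)}$, and only then invokes approximation estimates from \cite[Chapter~8.5]{H10} on each $K$. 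Your approach is more self-contained and makes the fractional-order step explicit (in particular the non-additivity issue for the Gagliardo seminorm, which you handle correctly); the paper's approach has the advantage of separating stability from approximation, which is convenient if one later replaces $\cI_\ell$ by a quasi-interpolant. Both arrive at the same constant dependence $C=C(\ol\rho,\vartheta,m,d)$ via shape-regularity.
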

\begin{proof}
	By the Sobolev embedding theorem $\|v\|_{C^0(\ol{\cT_i})}\le C \|v\|_{H^{\vartheta}(\cT_i)}$ and thus $\cI_\ell v$ is well-defined on $\cT_i$.
	Moreover, for $m=\{0,1\}$, we use that $V_\ell(\go)\neq\emptyset$ and the interpolation estimates from \cite[Theorem 4.4.20]{BS07} to see that
	\bee
	\|\cI_\ell v\|_{H^m(\cT_i)}\le C \|v\|_{H^m(\cT_i)}
	\eee
	for a constant $C=C(\ol\rho,m,d)>0$, independent of $\cT_i$.
	Together with $w=\cI_\ell w$ for any $w\in V_\ell(\go)$ we then obtain
	\begin{align*}
	\|(1-\cI_\ell)v\|_{H^m(\cT_i)}&\le \inf_{w\in V_\ell(\go)} \|v-w\|_{H^m(\cT_i)} + \|\cI_\ell(w-v)\|_{H^m(\cT_i)}\\
	&\le C \inf_{w\in V_\ell(\go)}\|v-w\|_{H^m(\cT_i)}\\
	&\le C \inf_{w\in V_\ell(\go)} \big(\sum_{K\in\cT_i}\|v-w\|^2_{H^m(K)}\big)^{1/2}.
	\end{align*}	
	Assumption~\ref{ass:EV22} guarantees that $K\in\cT_i$ holds by construction of the approximation space $V_\ell(\go)$.
	The claim now follows for instance by the estimates from \cite[Chapter 8.5]{H10} and by the fact that the constant $C=C(\ol\rho,\vartheta,m,d)>0$ is deterministic, i.e. independent of $\cT_i$.
\end{proof}

To bound the pathwise FE discretization error, we now fix $\go\in\gO, t\in\bT$ and $u_{N,\eps}(\go,\cdot,t)\in V$ and consider the corresponding pathwise elliptic PDE
\be\label{eq:elliptic}
\begin{split}
	-\nabla\cdot (a_{N,\eps}(\go,\cdot)\nabla u_{N,\eps}(\go,\cdot,t))
	&=f(\go,\cdot,t)-b_{N,\eps}(\go,\cdot)\cdot\nabla u_{N,\eps}(\go,\cdot,t)-\partial_t u_{N,\eps}(\go,\cdot,t)
	=:\widetilde f(\go,\cdot,t)
\end{split}
\ee
on $\cD$ with homogeneous Dirichlet boundary conditions.
Let $\cE$ be the set of all interior edges of $\cT(\go)$ and for every $e\in\cE$ let $i_e,i'_e\in\{1,\dots,\tau(\go)\}$ with $i_e\neq i'_e$ be the indices such that $e=\ol\cT_{i_e}\cap\ol\cT_{i'_e}$.
Accordingly, the outward normal vectors on either side of $e$ with respect $\cT_{i_e}$ and $\cT_{i'_e}$ are denoted by $\vv n_{i_e}$ and $\vv n_{i'_e}$, respectively.
Due to the discontinuities of $a_{N,\eps}(\go,\cdot)$, this yields the transition condition
\be\label{eq:interface}
a_{N,\eps}(\go,\cdot)\vv n_{i_e}\cdot \nabla u_{N,\eps}(\go,\cdot,t)=a_{N,\eps}(\go,\cdot)\vv n_{i'_e}\cdot \nabla u_{N,\eps}(\go,\cdot,t)\quad\text{on $e\in\cE$}.
\ee
Therefore, $u_{N,\eps}(\go,\cdot,t)$ may be regarded as weak solution to an \textit{elliptic interface problem} given by Eqs.~\eqref{eq:elliptic}-\eqref{eq:interface} satisfying for all $v\in V$
\bee
\int_\cD a_{N,\eps}(\go,x)\nabla u_{N,\eps}(\go,x,t)\cdot\nabla v(x)dx\\
=\int_\cD \widetilde f(\go,x,t)v(x)dx.
\eee
Given that $\widetilde f(\go,\cdot,t)\in H$ (which is verified almost surely by Lemmas~\ref{lem:w1infa} and~\ref{lem:dtu_H} below), it is known for dimension $d=2$ (e.g. from \cite{N90, NS94a, NS94b, P01}) that the solution $u_{N,\eps}(\go,\cdot,t)$ to the elliptic interface problem admits a \textit{decomposition into singular functions} with respect to the corners of $\cT(\omega)$.
More precisely, 
\be\label{eq:sing_decom}
u_{N,\eps}(\go,\cdot,t)=w+\sum_{j\in\cS}c_j\chi_j(r^{(j)})\psi_j(r^{(j)},\varphi^{(j)}),
\ee
where $\cS$ denotes the set of \textit{singular points} in the partition $\cT(\omega)$ (in our case $\cS$ is the set of corners in $\cT(\omega)$) and $(r^{(j)},\varphi^{(j)})$ are polar coordinates with respect to the singular point $j\in\cS$. For any $i=1,\dots,\tau$, it holds $w\in H^2(\cT_i)$ and $\psi\not\in H^{1+\gk_j}(\cT_i)$, but $\psi\in H^{1+\gk_j-\epsilon}(\cT_i)$ for some $\gk_j\in (0,1]$ and any $\epsilon>0$. Moreover, $c_j\in\bR$ are coefficients and $\chi_j$ is a smooth and bounded cutoff function vanishing near the singular point $j$. 

The decomposition in Eq.~\eqref{eq:sing_decom} shows that $u_{N,\eps}(\go,\cdot,t)\not\in H^2(\cT_i)$, but we may expect a piecewise regularity of $u_{N,\eps}(\go,\cdot,t)\in H^{1+\ul\gk-\eps}(\cT_i)$, where $\ul\gk:=\min_{j\in\cS}\gk_j$. The precise values of the exponents $\gk_j\in (0,1]$ depend on the shape of the partition elements $\cT_i$, i.e. their angle at the singular points $\cS$, as well as on the magnitude of the jump heights $P_i$. 
Furthermore, the results from \cite{N90,NS94a} show that the coefficients $c_j$ and $w$ depend continuously of the  on the right hand side $\widetilde f$, the gradient of $a_{N,\eps}$ on $\cT_i$ and the inverse of $a_{N,\eps,-}$.
A detailed analysis on the dependencies of $\gk_j$, $c_j$ and $w$ may be found in the literature (see~\cite{NS94a, NS94b}). 
For sake of simplicity we assume piecewise regularity of $u_{N,\eps}$ motivated by the decomposition in Eq.~\eqref{eq:sing_decom}.

\begin{assumption}\label{ass:fe}
	There are deterministic constants $\gk\in (0,1]$ and $C>0$, such that for all $N\in\bN, \eps>0$ and $t\in\bT$ there holds for almost all $\omega\in\Omega$ and for $i=1,\dots,\tau(\go)$,
	\bee
	\|u_{N,\eps}(\go,\cdot,t)\|_{H^{1+\gk}(\cT_i)} \le C\frac{\|\widetilde f(\go,\cdot,t)\|_{L^2(\cT_i)}+\|a_{N,\eps}(\go,\cdot)\|_{\cW^{1,\infty}(\cT_i)}\|u_{N,\eps}(\go,\cdot,t)\|_{H^1(\cT_i)}}{a_{N,\eps,-}(\go)}.
	\eee
\end{assumption}

\begin{rem}\label{rem:fe}
	Assumption~\ref{ass:fe} is made to simplify the following numerical analysis for $d=2$, whereas this assumption would not be necessary in $d=1$. 
	It is based on the decomposition in Eq.~\eqref{eq:sing_decom} as well as the estimates for $c_j$ and $w$ from \cite{N90} in terms of the right hand side $\widetilde f$ and $a_{N,\eps}$.
	Although it may seem artificial at a first glance, we recover $\gk$ close to one in the numerical examples from Section~\ref{sec:num2} for $d=2$. 
	On the other hand, it is actually possible to obtain lower bounds on $\gk$, i.e. to ensure a certain minimum of piecewise regularity almost surely. This is for instance the case if:
	\begin{itemize}
		\item the jump heights $\cP_i$ and the maximum interior angles of the $\cT_i$ are bounded from above and below, or
		\item if $a_{N,\eps}$ satisfies almost surely a \textit{quasi-monotonicity condition}, 
	\end{itemize}
	see \cite{P01} and the references therein.
	Since $d\le2$ and $u_{N,\eps}(\go,\cdot,t)\in H^{1+\gk}(\cT_i)$ holds for every polygonal subdomain $\cT_i$, it follows that $u_{N,\eps}(\go,\cdot,t)\in H^\vartheta(\cD)$, where $\vartheta=\min(1+\gk,3/2-\epsilon)$ for any $\epsilon>0$ (see \cite[Lemma 3.1]{P01}). This in turn yields that $u_{N,\eps}(\go,\cdot,t)\in C^0(\ol\cD)$ by the fractional Sobolev inequality. Hence, the nodal interpolation $\cI_\ell u_{N,\eps}(\go,\cdot,t)$ is well-defined.
\end{rem}

We are now ready to state our main result on the spatial discretization error:

\begin{thm}\label{thm:semi_error_a}
	Let Assumptions~\ref{ass:EV22} and~\ref{ass:fe} hold and let $u_{N,\eps,\ell}$ be the sample-adapted FE approximation of $u_{N,\eps}$ as in Eq.~\eqref{eq:semi_var}, where $\ol h_\ell\le 1$.
	Then, there is a $C>0$, independent of $N,\eps$ and $\ol h_\ell$ such that   
	\bee
	\bE\big(\sup_{t\in\bT}\|u_{N,\eps}-u_{N,\eps,\ell}\|^2_{*, t}\big)^{1/2}\le C\ol h_\ell^\gk.
	\eee
\end{thm}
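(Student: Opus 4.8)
The plan is to run the classical energy-method error analysis for semi-discrete parabolic Galerkin schemes (cf.~\cite[Chapter 11]{QV97}) pathwise for fixed $\go\in\gO$, and then convert the resulting pathwise bound into a mean-square estimate via H\"older's inequality and the moment estimates of Lemma~\ref{lem:a2}; strong measurability of $u_{N,\eps}-u_{N,\eps,\ell}$ is already guaranteed by Theorem~\ref{thm:exis2} and Proposition~\ref{prop:meas2}. Fix $\go$. By Assumption~\ref{ass:fe}/Remark~\ref{rem:fe}, $u_{N,\eps}(\go,\cdot,t)\in H^\vt(\cD)\subset C^0(\ol\cD)$, so its nodal interpolant $\cI_\ell u_{N,\eps}(\go,\cdot,t)\in V_\ell(\go)$ is well defined and, $\cI_\ell$ being time-independent, $\partial_t\cI_\ell u_{N,\eps}=\cI_\ell\partial_t u_{N,\eps}$. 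I would decompose
\[
u_{N,\eps}-u_{N,\eps,\ell}=\eta+\xi,\qquad \eta:=u_{N,\eps}-\cI_\ell u_{N,\eps},\quad \xi:=\cI_\ell u_{N,\eps}-u_{N,\eps,\ell}\in L^2(\bT;V_\ell(\go)),
\]
noting $\xi(\go,\cdot,0)=0$ because the discrete initial datum in Eq.~\eqref{eq:semi_var} is $\cI_\ell u_0$.

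\emph{Step 1: interpolation error.} Applying Lemma~\ref{lem:interpolation} on the partition $\cT(\go)$ (refined by $\cK_\ell(\go)$) with $\vt=1+\gk$ and $m\in\{0,1\}$ gives $\|\eta(\go,\cdot,t)\|_V\le C\,\ol h_\ell^{\gk}\big(\sum_{i=1}^{\tau(\go)}|u_{N,\eps}(\go,\cdot,t)|_{H^{1+\gk}(\cT_i)}\big)^{1/2}$, $\|\eta(\go,\cdot,t)\|_H\le C\,\ol h_\ell^{1+\gk}(\cdots)$, and the analogous bounds for $\partial_t\eta$ with $u_{N,\eps}$ replaced by $\partial_t u_{N,\eps}$. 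Assumption~\ref{ass:fe} — together with its counterpart for $\partial_t u_{N,\eps}$, obtained by differentiating Eq.~\eqref{eq:pde_approx} in time and applying the interface-regularity theory of~\cite{N90,NS94a,P01} to the resulting sensitivity equation, using $\partial_t f\in L^2(\bT;H)$ — then bounds the right-hand sums by $C\,a_{N,\eps,-}(\go)^{-1}\big(\|\widetilde f(\go,\cdot,t)\|_H+\|a_{N,\eps}(\go,\cdot)\|_{\cW^{1,\infty}(\cD)}\|u_{N,\eps}(\go,\cdot,t)\|_V\big)$ and its time-differentiated counterpart. Hence, using $\ol h_\ell\le1$, I obtain $\sup_{t\in\bT}\|\eta\|_{*,t}\le C_1(\go)\,\ol h_\ell^\gk$ and $\|\partial_t\eta\|_{L^2(\bT;H)}\le C_2(\go)\,\ol h_\ell^{1+\gk}$, with $C_1(\go),C_2(\go)$ built from $a_{N,\eps,-}(\go)^{-1}$, $\|a_{N,\eps}(\go,\cdot)\|_{\cW^{1,\infty}(\cD)}$ and pathwise norms (sup-in-$t$ and $L^2(\bT;\cdot)$) of $f,\partial_t f,u_{N,\eps},\partial_t u_{N,\eps}$.

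\emph{Step 2: discrete error.} Subtracting Eq.~\eqref{eq:semi_var} from Eq.~\eqref{eq:var2_approx} (both tested against $v_{\ell,\go}\in V_\ell(\go)$) and using the decomposition, $\xi$ solves
\[
\dualpair{V'}{V}{\partial_t\xi(\go,\cdot,t)}{v_{\ell,\go}}+B^{N,\eps}_\go(\xi(\go,\cdot,t),v_{\ell,\go})=-\dualpair{V'}{V}{\partial_t\eta(\go,\cdot,t)}{v_{\ell,\go}}-B^{N,\eps}_\go(\eta(\go,\cdot,t),v_{\ell,\go}),
\]
with $\xi(\go,\cdot,0)=0$. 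Testing with $v_{\ell,\go}=\xi$, using $\dualpair{V'}{V}{\partial_t\xi}{\xi}=\tfrac12\tfrac{d}{dt}\|\xi\|_H^2$ (Lemma~\ref{lem:a2bs_calc}), the coercivity ($\gtrsim a_{N,\eps,-}(\go)$) and continuity ($\lesssim a_{N,\eps,+}(\go)+\ol b_2$) of $B^{N,\eps}_\go$ obtained as in the proof of Theorem~\ref{thm:exis} after absorbing the advection term, Young's and Poincar\'e's inequalities, then integrating in $t$ and invoking Gr\"onwall's inequality, I get $\sup_{t\in\bT}\|\xi\|^2_{*,t}\le C\,e^{C(\go)T}(1+a_{N,\eps,-}(\go)^{-2})\big(\|\partial_t\eta\|_{L^2(\bT;H)}^2+(a_{N,\eps,+}(\go)+\ol b_2)^2\sup_t\|\eta\|_{*,t}^2\big)$. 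With Step 1 and the triangle inequality this yields the pathwise estimate $\sup_{t\in\bT}\|u_{N,\eps}-u_{N,\eps,\ell}\|_{*,t}\le C(\go)\,\ol h_\ell^\gk$, where $C(\go)$ is a finite product of $e^{C(\go)T}$, of powers of $a_{N,\eps,-}(\go)^{-1}$ and $a_{N,\eps,+}(\go)$, of $\|a_{N,\eps}(\go,\cdot)\|_{\cW^{1,\infty}(\cD)}$, and of pathwise norms of $f,\partial_t f,u_{N,\eps},\partial_t u_{N,\eps}$.

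\emph{Step 3: mean-square bound and main difficulty.} Taking $\bE\big((\,\cdot\,)^2\big)^{1/2}$ and splitting $C(\go)\,\ol h_\ell^\gk$ by repeated H\"older inequalities, I would invoke: Lemma~\ref{lem:a2}, which makes all moments of $a_{N,\eps,-}^{-1}$ and $a_{N,\eps,+}$ finite and \emph{uniformly bounded in $N,\eps$}; a matching moment bound for $\|a_{N,\eps}(\go,\cdot)\|_{\cW^{1,\infty}(\cD)}$, uniform in $N$ because $\|\nabla W_N\|_{L^\infty(\cD)}$ has all moments — here $2\ga\le\gb$ in Assumption~\ref{ass:EV22}(i) and the Gaussian tail bound~\eqref{eq:tail2} are used exactly as in the proof of Lemma~\ref{lem:a2}; the data bounds $f,\partial_t f\in L^p(\gO;L^2(\bT;H))$, $u_0\in L^p(\gO;V)$ with $p>2$; and the $L^r(\gO)$-bounds ($r<p$), uniform in $(N,\eps)$, for $\|u_{N,\eps}\|_{*,T}$ from Theorem~\ref{thm:exis2} and for $\|\partial_t u_{N,\eps}\|_{L^2(\bT;H)}$ from its time-differentiated analogue. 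The conditions $p>2$ and $1/p+1/s<1/2$ of Assumption~\ref{ass:EV22}(iii)--(iv) are precisely what keep the target exponent $r=2$ admissible after all H\"older splits; since every ingredient's moment bound is independent of $N,\eps$ and $\ol h_\ell$, so is the resulting constant $C$, which is the claimed estimate. I expect the main obstacle to lie in two places: (i) establishing the piecewise-$H^{1+\gk}$ regularity of $\partial_t u_{N,\eps}$, with a constant of the same structure as in Assumption~\ref{ass:fe}, which is what makes $\partial_t\eta$ controllable and relies on transferring the interface-regularity theory to the time-differentiated problem; and (ii) the integrability bookkeeping of Step 3 — securing uniform-in-$(N,\eps)$ moments of $\|a_{N,\eps}\|_{\cW^{1,\infty}(\cD)}$ and verifying that the combination of all H\"older exponents still leaves $r=2$ admissible, which is exactly why the stronger hypotheses $p>2$ and $1/p+1/s<1/2$ replace those of Assumption~\ref{ass:EV2}.
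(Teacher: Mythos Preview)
Your decomposition $\eta+\xi$ is the classical Thom\'ee-style argument, but it runs into a genuine gap at precisely the point you flag as the ``main obstacle'': you need $\|\partial_t\eta\|_{L^2(\bT;H)}=\|(1-\cI_\ell)\partial_t u_{N,\eps}\|_{L^2(\bT;H)}$ to be $O(\ol h_\ell^{1+\gk})$, and for this the nodal interpolant of $\partial_t u_{N,\eps}$ must be well defined and $\partial_t u_{N,\eps}(\go,\cdot,t)$ must lie piecewise in $H^{1+\gk}(\cT_i)$. Assumption~\ref{ass:fe} only covers $u_{N,\eps}$, and transferring it to the time-differentiated problem is not possible under the paper's hypotheses: applying Assumption~\ref{ass:fe} to $\partial_t u_{N,\eps}$ requires the right-hand side $\partial_t\widetilde f=\partial_t f-b_{N,\eps}\cdot\nabla\partial_t u_{N,\eps}-\partial_{tt}u_{N,\eps}$ to be in $L^2(\bT;H)$, hence in particular $\partial_{tt}u_{N,\eps}\in L^2(\bT;H)$ and $\partial_t u_{N,\eps}\in L^\infty(\bT;V)$. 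The analogue of Lemma~\ref{lem:dtu_H} for the differentiated equation would need $\partial_t u_{N,\eps}(\cdot,\cdot,0)=f(\cdot,\cdot,0)-A_{N,\eps}u_0\in L^p(\gO;V)$, i.e.\ essentially $u_0\in L^p(\gO;H^2)$ piecewise with compatible interface data, whereas Assumption~\ref{ass:EV22}(iii) only gives $u_0\in L^p(\gO;H^{1+\epsilon}(\cD))$. So the ingredient you need is simply unavailable, and without it your bound on $\sup_t\|\xi\|_{*,t}^2$ contains an $O(1)$ term.

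The paper avoids this entirely by \emph{not} splitting the error. It works directly with $\theta_\ell:=u_{N,\eps}-u_{N,\eps,\ell}$ and tests the Galerkin orthogonality relation against $v_{\ell,\go}=\cI_\ell u_{N,\eps}-u_{N,\eps,\ell}=\theta_\ell-(1-\cI_\ell)u_{N,\eps}$. The cross term that appears is $\int_0^t(\partial_t\theta_\ell,(1-\cI_\ell)u_{N,\eps})\,dz$, which is bounded by $\|\partial_t\theta_\ell\|_{L^2(\bT;H)}\,\|(1-\cI_\ell)u_{N,\eps}\|_{L^2(\bT;H)}$. The first factor is $O(1)$ uniformly in $N,\eps,\ell$ by Lemma~\ref{lem:dtu_H} (applied to both $u_{N,\eps}$ and $u_{N,\eps,\ell}$, the latter using $\|\cI_\ell u_0\|_V\lesssim\|u_0\|_{H^{1+\epsilon}}$); the second factor is $O(\ol h_\ell^{1+\gk})$ by Lemma~\ref{lem:interpolation} and Lemma~\ref{lem:regularity}. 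After Gr\"onwall (with deterministic constant, as in Theorem~\ref{thm:exis}) and the usual H\"older split in $\gO$, this term contributes $\ol h_\ell^{(1+\gk)/2}\ge\ol h_\ell^\gk$, while the $B^{N,\eps}_\go$ cross term contributes $\ol h_\ell^\gk$. The point is that the paper only ever needs $\partial_t u_{N,\eps},\partial_t u_{N,\eps,\ell}\in L^r(\gO;L^2(\bT;H))$ --- no spatial regularity of the time derivative is required.
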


For the proof of Theorem~\ref{thm:semi_error_a} we record several technical lemmas as preparation.

\begin{lem}\label{lem:w1infa}
	Let $\gT\in\bR^d$ be an open, bounded domain and denote by $\cW^{k,\infty}(\gT)$ the Sobolev space defined by the (semi-)norm 
	\bee
	\|v\|_{\cW^{k,\infty}(\gT)}:=\sum_{|\nu|\le k}\|D^\nu v\|_{L^\infty(\gT)},\quad|v|_{\cW^{k,\infty}(\gT)}:=\sum_{|\nu|= k}\|D^\nu v\|_{L^\infty(\gT)},\quad k\in\bN, 
	\eee
	for any measurable mapping $v:\gT\to\bR$.
	Under Assumption~\ref{ass:EV22}, for any $q\in[1,\infty)$ 
	\bee
	\big\|\max_{i=1,\dots,\tau}\|a_{N,\eps}\|_{\cW^{1,\infty}(\cT_i)}\big\|_{L^q(\gO;\bR)}\le C<+\infty,
	\eee
	where $C=C(q)>0$ is independent of $N$ and $\eps$.
\end{lem}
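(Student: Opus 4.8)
The goal is to bound the $L^q(\Omega)$-norm of $\max_{i=1,\dots,\tau}\|a_{N,\eps}\|_{\cW^{1,\infty}(\cT_i)}$ uniformly in $N$ and $\eps$. Recall $a_{N,\eps}(\go,x)=\ol a(x)+\Phi(W_N(\go,x))+P_\eps(\go,x)$. On each partition element $\cT_i$, the jump contribution $P_\eps$ is constant (equal to $\widetilde P_i(\go)\in[0,\ol P]$), so it contributes nothing to the $\cW^{1,\infty}$-\emph{seminorm} and at most $\ol P$ to the sup-norm part; thus $\max_i\|a_{N,\eps}\|_{\cW^{1,\infty}(\cT_i)}\le \|\ol a\|_{\cW^{1,\infty}(\cD)}+\ol P+\|\Phi(W_N)\|_{\cW^{1,\infty}(\cD)}$, where the last term is independent of the partition. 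So the partition structure and the random number $\tau$ play no role, and the proof reduces to bounding $\bE(\|\Phi(W_N)\|_{\cW^{1,\infty}(\cD)}^q)$ uniformly in $N$.

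The plan is then to control the sup-norm and the seminorm of $\Phi(W_N)$ separately. For the sup-norm, $\|\Phi(W_N)\|_{L^\infty(\cD)}\le\phi_1\exp(\phi_2\|W_N\|_{L^\infty(\cD)})$ by Assumption~\ref{ass:EV22}(ii), and the exponential moments $\bE(\exp(q\phi_2\|W_N\|_{L^\infty(\cD)}))$ are bounded uniformly in $N$ by exactly the Borell--TIS / Fernique-type argument already carried out in the proof of Lemma~\ref{lem:a2} (using $\ol\sigma_N^2\le tr(Q)$). For the seminorm, differentiate: $\partial_{x_j}\Phi(W_N)=\Phi'(W_N)\,\partial_{x_j}W_N$, so by the chain rule and Assumption~\ref{ass:EV22}(ii),
\bee
|\Phi(W_N)|_{\cW^{1,\infty}(\cD)}=\max_{j}\|\Phi'(W_N)\partial_{x_j}W_N\|_{L^\infty(\cD)}\le \phi_3\exp(\phi_4\|W_N\|_{L^\infty(\cD)})\max_j\|\partial_{x_j}W_N\|_{L^\infty(\cD)}.
\eee
Then I would apply the Cauchy--Schwarz inequality in $L^{2q}(\gO)$ to split $\bE\big(\exp(q\phi_4\|W_N\|_\infty)^{?}\cdot\|\nabla W_N\|_\infty^{?}\big)$ into a product of an exponential-moment factor (handled as above) and a factor $\bE\big(\max_j\|\partial_{x_j}W_N\|_{L^\infty(\cD)}^{2q}\big)^{1/(2q)}$.

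The remaining point, and the one requiring the most care, is the uniform-in-$N$ bound on $\bE\big(\max_j\|\partial_{x_j}W_N\|_{L^\infty(\cD)}^{2q}\big)$. Here I would use Assumption~\ref{ass:EV22}(i): $\partial_{x_j}W_N=\sum_{i=1}^N\sqrt{\eta_i}(\partial_{x_j}e_i)Z_i$ is, for each fixed $j$, a centered Gaussian field on $\cD$ with $\bE(|\partial_{x_j}W_N(x)|^2)\le C_e^2\sum_{i\ge1}\eta_i i^{2\ga}\le C_e^2\sum_{i\ge1}\eta_i i^{\gb}\le C_e^2 C_\eta$ (using $2\ga\le\gb$), uniformly in $N$ and $x$. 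As already noted in Remark~\ref{rem:EV_decay}, this gives an $L^2(\gO)$-limit $\partial_{x_j}W$, and the fields $\partial_{x_j}W_N$ are almost surely bounded and continuous (the partial sums are continuous and converge uniformly a.s. by the same Karhunen--Lo\`eve convergence argument). Since $\sup_{x\in\cD}\bE(|\partial_{x_j}W_N(x)|^2)$ is uniformly bounded, the Borell--TIS inequality again yields a sub-Gaussian tail for $\sup_{x\in\cD}|\partial_{x_j}W_N(x)|$ with a variance proxy independent of $N$, provided one also controls $\bE(\sup_{x\in\cD}\partial_{x_j}W_N(x))$ uniformly in $N$; this last expectation bound follows from Dudley's entropy integral (or Fernique's criterion), using that the canonical metric of $\partial_{x_j}W_N$ on $\cD$ is dominated, uniformly in $N$, by a fixed metric coming from $\sum_i\eta_i i^{2\ga}<\infty$ together with Lipschitz bounds on the $e_i$, hence the entropy integral is finite and $N$-independent. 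All moments of $\max_j\sup_{x}|\partial_{x_j}W_N(x)|$ are then bounded uniformly in $N$, the maximum over the finitely many $j\in\{1,\dots,d\}$ costing only a factor $d$. Combining the three estimates via Cauchy--Schwarz and then maximizing over the (irrelevant) index $i=1,\dots,\tau$ gives the claimed $L^q(\gO;\bR)$-bound with $C=C(q)$ independent of $N$ and $\eps$; the analogous reasoning with $W$ in place of $W_N$ handles the unapproximated coefficient $a$. The main obstacle is precisely this uniform-in-$N$ control of the supremum of the gradient field $\nabla W_N$, i.e. checking that the Gaussian-process maximal inequality (Borell--TIS together with an entropy/Fernique bound on the mean) applies with constants that do not deteriorate as more \KL modes are added; everything else is a routine application of Assumption~\ref{ass:EV22}(ii) and the arguments already used for Lemma~\ref{lem:a2}.
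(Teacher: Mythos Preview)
Your proposal follows exactly the same route as the paper's proof: the same reduction to bounding $\|\Phi(W_N)\|_{\cW^{1,\infty}(\cD)}$ (noting that $P_\eps$ is piecewise constant and $\ol a\in C^1$), the same chain-rule and H\"older split of $\|\Phi'(W_N)\,\partial_{x_j}W_N\|_{L^q(\gO;L^\infty(\cD))}$ into an exponential-moment factor handled via the Borell--TIS argument of Lemma~\ref{lem:a2} and a factor $\|\partial_{x_j}W_N\|_{L^{2q}(\gO;L^\infty(\cD))}$, and the same Borell--TIS tail bound for the gradient field with $N$-independent variance proxy $\widetilde\sigma^2\le C_e\sum_i\eta_i i^{\gb}$. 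The one place you differ is in explicitly flagging the uniform-in-$N$ control of $E_{N,i}:=\bE(\sup_x\partial_{x_j}W_N(x))$ and proposing Dudley's entropy integral for it; the paper instead absorbs this constant directly in the final tail-integral estimate. Be aware, though, that a Dudley bound for $\partial_{x_j}W_N$ in the form you sketch would require a modulus of continuity for $\partial_{x_j}e_i$ (essentially second-derivative information on the eigenfunctions), which Assumption~\ref{ass:EV22}(i) does not supply --- it only bounds $\|\partial_{x_j}e_i\|_{L^\infty(\cD)}$.
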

\begin{proof}
	As $a_{N,\eps}$ is almost surely continuously differentiable on each partition element $\cT_i$ by Assumption~\ref{ass:EV22}, we have
	\bee
	\|a_{N,\eps}(\go,\cdot)\|_{\cW^{1,\infty}(\cT_i)}\le a_{N,\eps,+}(\go)+\max_{i=1,\dots,d}
	\|\partial_{x_i} \ol a\|_{L^\infty(\cD)}
	+\|\frac{d}{dx}\Phi(W_N(\go,\cdot))\partial_{x_i}W_N(\go,\cdot)\|_{L^\infty(\cD)}
	\eee
	with $\|\partial_{x_i}\ol a\|_{L^\infty(\cD)}<+\infty$ for all $i=1,\dots,d$.
	Moreover, Lemma~\ref{lem:a2} states that $\|a_{N,\eps,+}\|_{L^q(\gO;\bR)}<+\infty$ for any $q\in[1,\infty)$ and the norm is bounded uniformly with respect to $N$ and $\eps$.
	Thus, we only need to estimate the last term on the right hand side.  
	We use H\"older's inequality and Assumption~\ref{ass:EV22} to obtain for any $q\ge1$
	\begin{align*}
	\big\|\frac{d}{dx}\Phi(W_N)\partial_{x_i} W_N\big\|_{L^q(\gO;L^\infty(\cD))}&\le \big\|\frac{d}{dx}\Phi(W_N)\big\|_{L^{2q}(\gO;L^\infty(\cD))}\|\partial_{x_i} W_N\|_{L^{2q}(\gO;L^\infty(\cD))}\\
	&\le \phi_3 \bE(\exp(2q\phi_4||W_N||_{L^\infty(D)}))^{1/(2q)}||\partial_{x_i}W_N||_{L^{2q}(\gO;L^\infty(\cD))}.
	\end{align*}
	The random field $W_N$ is centered Gaussian with $\sup_{x\in\cD}\bE(W_N(x)^2)\le \sup_{x\in\cD}\bE(W(x)^2)\le tr(Q)$
	and we proceed as in Lemma~\ref{lem:a2} to conclude that  
	\begin{align*}
	\bE(\exp(2q\phi_4||W_N||_{L^\infty(D)}))\le\int_0^\infty 2q\phi_4\exp(2q\phi_2c)\bP(||W||_{L^\infty(\cD)}>c)dc<+\infty.
	\end{align*}
	
	To estimate $||\partial_{x_i} W_N||_{L^{2q}(\gO;L^\infty(\cD))}$, we note that, for $x\in\cD$, $\partial_{x_i} W_N(x)$ is also centered Gaussian with variance $\sum_{j=1}^N\eta_j (\partial_{x_i} e_j(x))^2$.
	For any $N\in\bN$
	\bee
	\sup_{x\in\cD} |\partial_{x_i} W_N(x)|
	=\sup_{x\in\cD}\left|\sum_{j=1}^N\sqrt{\eta_j}\partial_{x_i} e_j(x) Z_j\right| 
	\le \sum_{j=1}^N\sqrt{\eta_j}j^\ga |Z_j|
	\eee 
	by Assumption~\ref{ass:EV22}(i), hence $\partial_{x_i} W_N$ is almost surely bounded on $\cD$.
	The symmetric distribution of $\partial_{x_i} W_N(x)$ and \cite[Theorem 2.1.1]{AT09} then imply $\bE(\sup_{x\in\cD}\partial_{x_i} W_N(x))\ge0$, 
	\bee
	\bE(||\partial_{x_i} W_N||_{L^\infty(\cD)})\le 2\bE(\sup_{x\in\cD}\partial_{x_i} W_N(x))=:2E_{N,i}<+\infty,\quad\text{and}
	\eee
	\be\label{eq:tail21}
	\bP(\sup_{x\in\cD}\partial_{x_i} W_N(x)>c)\le \exp(-\frac{(c-E_{N,i})^2}{2\widetilde\sigma_{N,i}^2})\le \exp(-\frac{(c-E_{N,i})^2}{2\widetilde\sigma^2}),\quad c>0,
	\ee
	analogously to Lemma~\ref{lem:a2}. The maximal variances in Ineq.~\eqref{eq:tail21} are given by  
	\bee
	\widetilde \sigma_{N,i}^2:=\sup_{x\in\cD}\bE((\partial_{x_i} W_N(x))^2)=\sum_{j=1}^N\eta_j (\partial_{x_i}e_j(x)) \le \widetilde\sigma^2:=C_e\sum_{j=1}^\infty \eta_jj^{2\ga}\le C_e\sum_{j=1}^\infty \eta_jj^{\gb}<+\infty.	
	\eee
	Without loss of generality, we assume $q\in\bN$ to obtain $\bE(||\partial_{x_i} W_N||^{2q}_{L^\infty(\cD)})=\bE(\sup_{x\in\cD}(\partial_{x_i} W_N(x))^{2q})$.
	We now have to make sure that $\bE(\sup_{x\in\cD}(\partial_{x_i} W_N(x))^{2q})$ is bounded uniformly in $i$ and $N$.
	Similar to Lemma~\ref{lem:a2}, Fubini's Theorem and Ineq.~\eqref{eq:tail21} yield
	\be\label{eq:tail22}
	\begin{split}
		\bE(\sup_{x\in\cD}(\partial_{x_i} W_N(x))^{2q})&=\int_0^\infty \bP(\sup_{x\in\cD}(\partial_{x_i} W_N(x))^{2q}>c)dc\\
		&\le \int_{0}^\infty\exp(-\frac{(c^{1/(2q)}-E_{N,i})^2}{2\widetilde\sigma^2})dc\\
		&\le \int_\bR \exp(-\frac{|c|^{1/q}}{2\widetilde\sigma^2})dc,
	\end{split}
	\ee
	and the last integral is finite for any $q\in[1,\infty)$ and independent of $N$ and $i$.
\end{proof}

\begin{lem}\label{lem:dtu_H}
	Under Assumption~\ref{ass:EV22}, for any $r\in[1,p)$ it holds that 
	\bee
	\|\partial_t u_{N,\eps}\|_{L^r(\gO;L^2(\bT;H))}+\big\|\sup_{t\in\bT} \|u_{N,\eps}(\cdot,\cdot,t)\|_V\big\|_{L^r(\gO;\bR)}
	\le C\Big(\|u_0\|_{L^p(\gO;V)} +\|f\|_{L^p(\gO;L^2(\bT;H))}\Big)
	\eee
	as well as 
	\bee
	\|\partial_t u_{N,\eps,\ell}\|_{L^r(\gO;L^2(\bT;H))}+\big\|\sup_{t\in\bT} \|u_{N,\eps,\ell}(\cdot,\cdot,t)\|_V\big\|_{L^r(\gO;\bR)}
	\le C\Big(\|u_0\|_{L^p(\gO;H^{1+\epsilon}(\cD))} +\|f\|_{L^p(\gO;L^2(\bT;H))}\Big).
	\eee
	
\end{lem}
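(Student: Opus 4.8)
The plan is to derive a pathwise higher-order energy estimate by testing the variational equations with the time derivative of the solution, and then to pass to $L^r(\gO)$-moments using Lemma~\ref{lem:a2} and Theorem~\ref{thm:exis2}. I would treat the semi-discrete solution first, since for fixed $\go$ the space $V_\ell(\go)$ is finite-dimensional and hence $\partial_t u_{N,\eps,\ell}(\go,\cdot,t)\in V_\ell(\go)$ is an admissible test function in~\eqref{eq:semi_var}. As $a_{N,\eps}$ is time-independent, one has $(a_{N,\eps}(\go,\cdot)\nabla u_{N,\eps,\ell},\nabla\partial_t u_{N,\eps,\ell})=\tfrac12\tfrac{d}{dt}(a_{N,\eps}(\go,\cdot)\nabla u_{N,\eps,\ell},\nabla u_{N,\eps,\ell})$, so testing with $v_{\ell,\go}=\partial_t u_{N,\eps,\ell}(\go,\cdot,t)$, applying Young's inequality to $F_{\go,t}(\partial_t u_{N,\eps,\ell})$ and $-(b_{N,\eps}\cdot\nabla u_{N,\eps,\ell},\partial_t u_{N,\eps,\ell})$, and absorbing two copies of $\tfrac14\|\partial_t u_{N,\eps,\ell}(\go,\cdot,t)\|_H^2$ to the left yields
\[
\tfrac12\|\partial_t u_{N,\eps,\ell}(\go,\cdot,t)\|_H^2+\tfrac12\tfrac{d}{dt}(a_{N,\eps}(\go,\cdot)\nabla u_{N,\eps,\ell},\nabla u_{N,\eps,\ell})\le\|f(\go,\cdot,t)\|_H^2+\|b_{N,\eps}(\go,\cdot)\cdot\nabla u_{N,\eps,\ell}(\go,\cdot,t)\|_H^2 .
\]
Here it is essential to use the \emph{bounded} part of Assumption~\ref{ass:para}(iv)/Remark~\ref{rem:advection}, $\|b_{N,\eps}\|_\infty\le\ol b_2$, together with~\eqref{eq:nabla_norm}, so that the last term is $\le 2^{d-1}\ol b_2^2|u_{N,\eps,\ell}(\go,\cdot,t)|_{H^1(\cD)}^2$ with a \emph{deterministic} constant.

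Integrating over $[0,t]$, bounding $\int_0^t|u_{N,\eps,\ell}|_{H^1(\cD)}^2\,dz\le\|u_{N,\eps,\ell}(\go)\|_{*,T}^2$ and the initial term by $a_{N,\eps,+}(\go)\|\cI_\ell u_0(\go,\cdot)\|_V^2$, invoking coercivity $(a_{N,\eps}\nabla v,\nabla v)\ge a_{N,\eps,-}|v|_{H^1(\cD)}^2$ with the Poincar\'e inequality, and using the interpolation stability $\|\cI_\ell u_0(\go,\cdot)\|_V\le C\|u_0(\go,\cdot)\|_{H^{1+\epsilon}(\cD)}$ from Lemma~\ref{lem:interpolation} (for $\ol h_\ell\le1$) gives the pathwise bound
\[
\|\partial_t u_{N,\eps,\ell}(\go)\|_{L^2(\bT;H)}^2+\sup_{t\in\bT}\|u_{N,\eps,\ell}(\go,\cdot,t)\|_V^2\le C\Big(1+\tfrac{1}{a_{N,\eps,-}(\go)}\Big)\Big(a_{N,\eps,+}(\go)\|u_0(\go,\cdot)\|_{H^{1+\epsilon}(\cD)}^2+\|f(\go,\cdot,\cdot)\|_{L^2(\bT;H)}^2+\|u_{N,\eps,\ell}(\go)\|_{*,T}^2\Big),
\]
with $C$ deterministic and independent of $N,\eps,\ell$. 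The same computation, carried out on a finite-dimensional Galerkin approximation of~\eqref{eq:var2_approx} and then passed to the limit (this is needed because we do not yet know $\partial_t u_{N,\eps}\in L^2(\bT;V)$; the discontinuity of $a_{N,\eps}$ is harmless, as only $a_{N,\eps}\in L^\infty(\cD)$ and its positivity enter), gives the analogous bound for $u_{N,\eps}$ with $\cI_\ell u_0$ replaced by $u_0$ and $\|u_0\|_{H^{1+\epsilon}(\cD)}$ by $\|u_0\|_V$.

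It then remains to take $L^r(\gO;\bR)$-norms of the square roots of these pathwise bounds and to split the products with H\"older's inequality. By Lemma~\ref{lem:a2}, $1/a_{N,\eps,-}$ and $a_{N,\eps,+}$ lie in $L^q(\gO;\bR)$ for every $q\in[1,\infty)$ with norms bounded uniformly in $N,\eps$; by Theorem~\ref{thm:exis2} (and the analogous estimate for $u_{N,\eps,\ell}$, obtained by testing~\eqref{eq:semi_var} with $u_{N,\eps,\ell}(\go,\cdot,t)$ exactly as in Theorem~\ref{thm:exis}) the norms $\|u_{N,\eps}\|_{*,T},\|u_{N,\eps,\ell}\|_{*,T}$ lie in $L^{r'}(\gO;\bR)$ for every $r'<p$ and are controlled by the data norms appearing in the statement; and $u_0\in L^p(\gO;V)$ resp.\ $L^p(\gO;H^{1+\epsilon}(\cD))$, $f\in L^p(\gO;L^2(\bT;H))$ by Assumption~\ref{ass:EV22}(iii). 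Since $r<p$, the H\"older exponents can be chosen -- e.g.\ $q_2\in(r,p)$ for the data-dependent factors, $q_1=(1/r-1/q_2)^{-1}<\infty$ for $(1+1/a_{N,\eps,-})^{1/2}$, and a further split $1/q_3+1/q_4=1/q_2$ with $q_4\le p$ for $a_{N,\eps,+}^{1/2}\|u_0\|$ -- so that every factor is finite, which yields the two claimed estimates. Strong measurability of $u_{N,\eps}$ and $u_{N,\eps,\ell}$ (Theorem~\ref{thm:exis2}, Proposition~\ref{prop:meas2}) together with the pathwise $L^2(\bT;H)$-regularity of their time derivatives makes both left-hand sides measurable functions of $\go$.

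\textbf{Main obstacle.} The crucial point is to keep the dependence of the pathwise estimate on the random coefficients \emph{polynomial}: a na\"ive treatment of the advection term via Gr\"onwall's inequality would put $1/a_{N,\eps,-}$ (or $a_{N,\eps,+}$) into an exponent, and since $1/a_{N,\eps,-}$ is of order $\exp(\phi_2\|W_N\|_{L^\infty(\cD)})$, its exponential is not $\bP$-integrable. Routing the term $\int_0^t\|b_{N,\eps}\cdot\nabla u_{N,\eps}\|_H^2\,dz$ through the already-established $\|\cdot\|_{*,T}$-bound of Theorem~\ref{thm:exis2} circumvents this and leaves only the polynomial factor $(1+1/a_{N,\eps,-})(a_{N,\eps,+}+\dots)$, which has finite moments of all orders, uniformly in $N,\eps$, by Lemma~\ref{lem:a2}. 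A secondary, routine point is the Galerkin-limit justification of testing with $\partial_t u_{N,\eps}$ in the undiscretized problem (cf.\ \cite[Chapter 7.1]{E10}).
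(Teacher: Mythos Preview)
Your proposal is correct and follows essentially the same route as the paper: test with $\partial_t u$ (via a Galerkin argument for $u_{N,\eps}$, directly for $u_{N,\eps,\ell}$), use the deterministic bound $\|b_{N,\eps}\|_\infty\le\ol b_2$ together with~\eqref{eq:nabla_norm}, route the resulting $\int_0^T|u|_{H^1(\cD)}^2\,dt$ through the $\|\cdot\|_{*,T}$-estimate of Theorem~\ref{thm:exis2}, and close with H\"older and Lemma~\ref{lem:a2}. Your identification of the ``main obstacle'' --- keeping the dependence on $a_{N,\eps,\pm}$ polynomial by avoiding a Gr\"onwall step that would exponentiate random quantities --- is exactly the reason the paper proceeds this way; the paper compresses the derivation by citing \cite[Chapter~7.1, Theorem~5]{E10}, but the content is the same.
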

\begin{proof}
	We use the first part of the proof from \cite[Chapter 7.1, Theorem 5]{E10} to obtain the pathwise estimate 
	\begin{align*}
	\|\partial_t u_{N,\eps}(\go,\cdot,\cdot)\|^2_{L^2(\bT;H)}&+\sup_{t\in\bT}\int_\cD a_{N,\eps}(\go,x,t)\nabla u_{N,\eps}(\go,x,t)\cdot\nabla u_{N,\eps}(\go,x,t)dx\\
	\le &\int_\cD a_{N,\eps}(\go,x,t)\nabla u_{N,\eps}(\go,x,0)\cdot\nabla u_{N,\eps}(\go,x,0)dx\\
	&+\int_0^T\|b_{N,\eps}(\go,x,t)\cdot\nabla  u_{N,\eps}(\go,\cdot,t)\|^2_Hdt + \|f(\go,\cdot,\cdot)\|^2_{L^2(\bT;H)}\\
	\le &a_{N,\eps,+}(\go)\|u_0(\go,\cdot)\|_V^2 +\ol b^2_22^{d-1}\|u(\go,\cdot,\cdot)\|^2_{T,*}+\|f(\go,\cdot,\cdot)\|^2_{L^2(\bT;H)}.
	\end{align*}
	In the last step, we have used that $\|b_{N,\eps}(\go,x)\|_\infty\le \ol b_2$ (see Remark~\ref{rem:advection}) as well as Ineq.~\eqref{eq:nabla_norm}.
	On the other hand, we have the lower bound
	\begin{align*}
	\|\partial_t u_{N,\eps}(\go,\cdot,\cdot)\|^2_{L^2(\bT;H)}&+\sup_{t\in\bT}\int_\cD a_{N,\eps}(\go,x)\nabla u_{N,\eps}(\go,x,t)\cdot\nabla u_{N,\eps}(\go,x,t)dx\\
	\ge&\|\partial_t u_{N,\eps}(\go,\cdot,\cdot)\|^2_{L^2(\bT;H)}+a_{N,\eps,-}(\go)\sup_{t\in\bT} |u_{N,\eps}(\go,\cdot,t)|^2_{H^1(\cD)}.
	\end{align*}
	Since the norms $|\cdot|_{H^1(\cD)}$ and $\|\cdot\|_{H^1(\cD)}=\|\cdot\|_V$ are equivalent by the Poincar\'e inequality, we treat $a_{N,\eps,-}$ once more in the fashion of Theorem~\ref{thm:exis} to arrive at the estimate 
	\begin{align*}
	\|\partial_t u_{N,\eps}(\go,\cdot,\cdot)\|^2_{L^2(\bT;H)}&+\sup_{t\in\bT}\int_\cD \|u_{N,\eps}(\go,x,t)\|^2_Vdx\\
	\le &C(1+1/a_{N,\eps,-}(\go))\Big(a_{N,\eps,+}(\go)\|u_0(\go,\cdot)\|_V^2 +\|u(\go,\cdot,\cdot)\|^2_{T,*}+\|f(\go,\cdot,\cdot)\|^2_{L^2(\bT;H)}\Big).
	\end{align*}
	The claim now follows with $1/a_{N,\eps,-},a_{N,\eps,+}\in L^q(\gO;\bR)$ for arbitrary large $q\in[1,\infty)$, H\"older's inequality and Theorem~\ref{thm:exis}.
	The proof for the estimate on $u_{N,\eps,\ell}$ may be carried out analogously with the initial condition $u_{N,\eps,\ell}(\cdot,\cdot,0)=\cI_\ell u_0$ and by observing that with Lemma~\ref{lem:interpolation}
	\bee
	\|\cI_\ell u_0\|_{L^p(\gO;V)}\le \|\cI_\ell u_0-u_0\|_{L^p(\gO;V)}+\|u_0\|_{L^p(\gO;V)}\le C \|u_0\|_{L^p(\gO;H^{1+\epsilon}(\cD))}.
	\eee
	
\end{proof}

\begin{lem}\label{lem:regularity} 
	Under Assumption~\ref{ass:EV22} and~\ref{ass:fe}, for any $r\in[2,p)$ it holds that 
	\begin{align*}
	\bE\Big(\big(\int_0^T\sum_{i=1}^\tau \|u_{N,\eps}\|_{H^{1+\gk}(\cT_i)}^2dt\big)^{r/2}\Big)^{1/r}<+\infty.
	\end{align*}
\end{lem}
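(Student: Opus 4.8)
The key is to take the pathwise estimate from Assumption~\ref{ass:fe}, raise it to the power $r$, integrate over $\bT$, take expectations, and bound the three resulting random factors uniformly in $N$ and $\eps$. First I would fix $\go\in\gO$ and $t\in\bT$ and apply Assumption~\ref{ass:fe} to get
\bee
\sum_{i=1}^{\tau(\go)}\|u_{N,\eps}(\go,\cdot,t)\|^2_{H^{1+\gk}(\cT_i)}
\le C\frac{\|\widetilde f(\go,\cdot,t)\|^2_{L^2(\cD)}+\big(\max_{i}\|a_{N,\eps}(\go,\cdot)\|_{\cW^{1,\infty}(\cT_i)}\big)^2\|u_{N,\eps}(\go,\cdot,t)\|^2_{H^1(\cD)}}{a_{N,\eps,-}(\go)^2},
\eee
using that $a_{N,\eps,-}(\go)$ and the $\cW^{1,\infty}$-factor do not depend on $i$, and that $\sum_i\|v\|^2_{L^2(\cT_i)}=\|v\|^2_{L^2(\cD)}$ and $\sum_i\|v\|^2_{H^1(\cT_i)}=\|v\|^2_{H^1(\cD)}$ since $\cT(\go)$ is a partition.

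Next I would integrate in $t$ over $[0,T]$, raise to the power $r/2$, and take expectations. Using $\widetilde f=f-b_{N,\eps}\cdot\nabla u_{N,\eps}-\partial_t u_{N,\eps}$ together with Remark~\ref{rem:advection} (so $\|b_{N,\eps}(\go,\cdot)\|_\infty\le\ol b_2$) and Ineq.~\eqref{eq:nabla_norm}, one controls $\|\widetilde f(\go,\cdot,t)\|_{L^2(\cD)}$ by $\|f(\go,\cdot,t)\|_H+\ol b_2 2^{(d-1)/2}|u_{N,\eps}(\go,\cdot,t)|_{H^1(\cD)}+\|\partial_t u_{N,\eps}(\go,\cdot,t)\|_H$. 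After applying the triangle inequality in $L^r(\gO;L^2(\bT;\cdot))$ and then H\"older's inequality to split the product of the $\cW^{1,\infty}$/$1/a_{N,\eps,-}$ factors from the Sobolev norms, I am left with three types of terms: (a) moments of $\max_i\|a_{N,\eps}\|_{\cW^{1,\infty}(\cT_i)}$ and of $1/a_{N,\eps,-}$ to arbitrary finite order, which are uniformly bounded in $N,\eps$ by Lemma~\ref{lem:w1infa} and Lemma~\ref{lem:a2}; (b) $\|f\|_{L^p(\gO;L^2(\bT;H))}$, finite by Assumption~\ref{ass:EV22}(iii); and (c) $\|\partial_t u_{N,\eps}\|_{L^{\ol r}(\gO;L^2(\bT;H))}$ and $\|\sup_{t}\|u_{N,\eps}(\cdot,\cdot,t)\|_V\|_{L^{\ol r}(\gO;\bR)}$ for a suitable $\ol r\in[r,p)$, both finite and uniformly bounded in $N,\eps$ by Lemma~\ref{lem:dtu_H}. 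The H\"older exponents have to be chosen so that, e.g., $1/\ol r + 1/q = 1/r$ with $q<\infty$ large; since $r<p$ strictly this is possible, and the $q$-norms of factors in (a) are finite for every finite $q$.

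The main obstacle is bookkeeping the exponents: after pulling the $t$-integral inside and distributing the power $r/2$ over a sum, one needs $\|\int_0^T (X(t)Y(t))^{2}dt\|$-type quantities to factor, and this requires using H\"older in $t$ to separate the $t$-independent random coefficients $a_{N,\eps,-}(\go)$, $\max_i\|a_{N,\eps}(\go,\cdot)\|_{\cW^{1,\infty}(\cT_i)}$ (which are constants in $t$, so this is harmless) from the $t$-dependent Sobolev norms, followed by H\"older in $\go$ with the product structure $1/r = 1/\ol r + 1/q$. None of the individual estimates is deep — each factor is handled by a cited lemma — but one must verify that the chosen $\ol r$ can be taken in $[2,p)$ simultaneously with $r\in[2,p)$ and that all the $L^q(\gO)$-norms of the coefficient factors used are finite, which follows since Lemmas~\ref{lem:a2} and~\ref{lem:w1infa} give finiteness for \emph{all} $q\in[1,\infty)$. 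Assembling these yields a finite bound (in fact one uniform in $N$ and $\eps$), which proves the claim.
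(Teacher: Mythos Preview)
Your proposal is correct and follows essentially the same strategy as the paper: apply Assumption~\ref{ass:fe} pathwise, sum over the partition, integrate in time, take moments, and then separate the coefficient factors $1/a_{N,\eps,-}$ and $\max_i\|a_{N,\eps}\|_{\cW^{1,\infty}(\cT_i)}$ from the Sobolev norms of $\widetilde f$ and $u_{N,\eps}$ via H\"older's inequality, finishing with Lemmas~\ref{lem:a2}, \ref{lem:w1infa}, \ref{lem:dtu_H} and Theorem~\ref{thm:exis2}.

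There is one small but genuine improvement in your argument. When summing the squared pathwise bound from Assumption~\ref{ass:fe} over $i$, you use $\sum_{i}\|u_{N,\eps}(\go,\cdot,t)\|^2_{H^1(\cT_i)}=\|u_{N,\eps}(\go,\cdot,t)\|^2_{H^1(\cD)}$ (since the $\cT_i$ partition $\cD$) after replacing each $\|a_{N,\eps}\|_{\cW^{1,\infty}(\cT_i)}$ by the maximum. The paper instead bounds each $\|u_{N,\eps}\|_{H^1(\cT_i)}$ crudely by $\|u_{N,\eps}\|_V$ before summing, which produces an additional random factor $\tau(\go)$ in the estimate; it then has to invoke Assumption~\ref{ass:EV22}(v), namely $\bE(\tau^n)<\infty$ for all $n$, together with an extra H\"older split to control $\|\tau\|_{L^{2q}(\gO;\bR)}$. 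Your route avoids this factor entirely, so Assumption~\ref{ass:EV22}(v) is not needed for this lemma in your version. Apart from this, the bookkeeping of H\"older exponents and the cited lemmas are the same.
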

\begin{proof}
	Assumptions~\ref{ass:EV22}(iv) and~\ref{ass:fe} yield for fixed $\go$ and $t$
	\bee
	\begin{split} 
		\sum_{i=1}^{\tau(\go)} \|u_{N,\eps}(\go,\cdot,t)\|_{H^{1+\gk}(\cT_i)}^2
		&\le C \frac{\|\widetilde f(\go,\cdot,t)\|_H^2+\|u_{N,\eps}(\go,\cdot,t)\|_V^2\sum_{i=1}^{\tau(\go)}\|a_{N,\eps}(\go,\cdot)\|^2_{\cW^{1,\infty}(\cT_i)}}
		{a_{N,\eps,-}(\go)^2} \\
		&\le C \frac{\|\widetilde f(\go,\cdot,t)\|_H^2+\|u_{N,\eps}(\go,\cdot,t)\|_V^2\tau(\go)\max_{i=1,\dots,\tau(\go)}\|a_{N,\eps}(\go,\cdot)\|^2_{\cW^{1,\infty}(\cT_i)}}
		{a_{N,\eps,-}(\go)^2}. 
	\end{split}
	\eee
	Now, we integrate with respect to $\bT$ and $\gO$, and use H\"older's inequality to obtain for $r\in[2,p)$
	\begin{align*}
	\bE\Big(\big(\int_0^T\sum_{i=1}^\tau \|u_{N,\eps}\|_{H^{1+\gk}(\cT_i)}^2dt\big)^{r/2}\Big)^{1/r}
	&\le C\Big(\|1/a_{N,\eps,-}\|_{L^{q}(\gO;L^2(\bT;H)}\|\widetilde f\|_{L^{r_1}(\gO;L^2(\bT;H))}\\
	&\qquad+\|1/a_{N,\eps,-}\|_{L^{4q}(\gO;L^2(\bT;H)}\|\tau\|_{L^{2q}(\gO;\bR)}\\
	&\qquad\cdot\big\|\max_{i=1,\dots,\tau}\|a_{N,\eps}\|_{\cW^{1,\infty}(\cT_i)}\big\|_{L^{4q}(\gO;\bR)}\|u_{N,\eps}\|_{L^{r_1}(\gO;L^2(\bT;V))}\Big)\\
	&\le C\big(\|\widetilde f\|_{L^{r_1}(\gO;L^2(\bT;H)}+\|u_{N,\eps}\|_{L^{r_1}(\gO;L^2(\bT;V))}\big).
	\end{align*}
	In the derivation, we have used the H\"older exponents $r_1\in(r,p)$ and $q:=(1/r-1/r_1)^{-1}<+\infty$. 
	The last estimate holds due to Lemmas~\ref{lem:a2} and~\ref{lem:w1infa} and Assumption~\ref{ass:EV22}(iv).
	By definition
	\bee
	\widetilde f(\go,\cdot,t)=f(\go,\cdot,t)-b_{N,\eps}(\go,\cdot)\cdot\nabla u_{N,\eps}(\go,\cdot,t)-\partial_t u_{N,\eps}(\go,\cdot,t),
	\eee
	hence Theorem~\ref{thm:exis2}, Lemma~\ref{lem:w1infa} and Lemma~\ref{lem:dtu_H} yield
	\be\label{eq:u_H2_2}
	\bE\Big(\int_0^T\sum_{i=1}^\tau \|u_{N,\eps}\|_{H^{1+\gk}(\cT_i)}^2dt\Big)^{1/2}
	\le C\Big(\|u_0\|_{L^p(\gO;V)}+\|f\|_{L^p(\gO;L^2(\bT;H))}\Big)<+\infty.
	\ee
\end{proof}

We are now ready to proof our main result:

\begin{proof}[Proof of Theorem~\ref{thm:semi_error_a}]

	We define the error $\theta_\ell:=u_{N,\eps}-u_{N,\eps,\ell}$ and observe that for fixed $\go\in\gO, t\in\bT$ Eqs.~\eqref{eq:semi_var} and~\eqref{eq:var2_approx} yield
	\begin{align*}
	\dualpair{V'}{V}{\partial_t\theta_\ell(\go,\cdot,t)}{v_{\ell,\go}}
	+B^{N,\eps}_\go(\theta_\ell(\go,\cdot,t),v_{\ell,\go})
	&=0\\
	\theta_\ell(\go,\cdot,0)&=(u_0-\cI_\ell u_0)(\go,\cdot),
	\end{align*}
	for all $v_{\ell,\go}\in V_\ell(\go)$.
	We then test against $v_{\ell,\go}=\cI_\ell u_{N,\eps}(\go,\cdot,t)-u_{N,\eps,\ell}(\go,\cdot,t)$ and integrate over $[0,t]$ to obtain
	\be\label{eq:sd1}
	\begin{split}
		\frac{1}{2}\|\theta_\ell(\go,\cdot,t)\|_H^2
		+\int_0^t(a_{N,\eps}(\go,\cdot),\sum_{i=1}^d(\partial_{x_i}(\theta_\ell(\go,\cdot,z))^2)dz= 
		&\frac{1}{2}\|\theta_\ell(\go,\cdot,0)\|_H^2\\
		&+\int_0^t\dualpair{V'}{V}{\partial_t\theta_\ell(\go,\cdot,z)}{(1-\cI_\ell)u_{N,\eps}(\go,\cdot,z)}dz\\
		&+\int_0^tB^{N,\eps}_\go(\theta_\ell(\go,\cdot,z),(1-\cI_\ell)u_{N,\eps}(\go,\cdot,z))dz\\
		&-\int_0^t(b_{N,\eps}(\go,\cdot)\cdot\nabla \theta_\ell(\go,\cdot,z),\theta_\ell(\go,\cdot,z))dz\\
		=&:\frac{1}{2}\|\theta_\ell(\go,\cdot,0)\|_H^2+I+II+III.
	\end{split}
	\ee
	Lemma~\ref{lem:dtu_H} implies that $\partial_t \theta_\ell(\go,\cdot,\cdot)\in L^2(\bT;H)$ and we use the Cauchy-Schwarz inequality to bound $I$:
	\bee
	I=\int_0^t(\partial_t\theta_\ell(\go,\cdot,z),(1-\cI_\ell)u_{N,\eps}(\go,\cdot,z))dz\le \int_0^t \|\partial_t\theta_\ell(\go,\cdot,z)\|_H\|(1-\cI_\ell)u_{N,\eps}(\go,\cdot,z))\|_Hdz.
	\eee
	We then use the Cauchy-Schwarz inequality and Ineq.~\eqref{eq:nabla_norm} to bound the second term 
	\begin{align*}
	II&=\int_0^t(a_{N,\eps}(\go,\cdot),\nabla \theta_\ell(\go,\cdot,z)\cdot\nabla(1-\cI_\ell)u_{N,\eps}(\go,\cdot,z))dz\\
	&\quad+\int_0^t(b_{N,\eps}(\go,\cdot)\cdot\nabla \theta_\ell(\go,\cdot,z),(1-\cI_\ell)u_{N,\eps}(\go,\cdot,z))dz\\
	&\le \int_0^t\Big(a_{N,\eps}(\go,\cdot)\big(\sum_{i=1}^d(\partial_{x_i}\theta_\ell(\go,\cdot,z))^2\big)^{1/2}
	,\,\big(\sum_{i=1}^d(\partial_{x_i}(1-\cI_\ell)u_{N,\eps}(\go,\cdot,z))^2\big)^{1/2}\Big)dz\\
	&\quad+\int_0^t2^{d/2-1/2}(\|b_{N,\eps}(\go,\cdot)\|_\infty\big(\sum_{i=1}^d(\partial_{x_i}\theta_\ell(\go,\cdot,z))^2\big)^{1/2},|(1-\cI_\ell)u_{N,\eps}(\go,\cdot,z)|)dz,
	\end{align*}
	and Young's inequality yields
	\begin{align*}
	II&\le \int_0^t\frac{1}{4}(a_{N,\eps}(\go,\cdot),\sum_{i=1}^d(\partial_{x_i}\theta_\ell(\go,\cdot,z))^2)
	+a_{N,\eps,+}(\go)|(1-\cI_\ell)u_{N,\eps}(\go,\cdot,z)|_{H^1(\cD)}^2dz\\
	&\quad+ \int_0^t\frac{1}{4}(a_{N,\eps}(\go,\cdot),\sum_{i=1}^d(\partial_{x_i}\theta_\ell(\go,\cdot,z))^2)
	+2^{d-1}\,\ol b_1^2 a_{N,\eps,+}(\go)\|(1-\cI_\ell)u_{N,\eps}(\go,\cdot,z)\|_H^2dz\\
	&\le \frac{1}{2}\int_0^t(a_{N,\eps}(\go,\cdot),\sum_{i=1}^d(\partial_{x_i}\theta_\ell(\go,\cdot,z))^2)dz+
	Ca_{N,\eps,+}(\go)\int_0^t\|(1-\cI_\ell)u_{N,\eps}(\go,\cdot,z)\|_{V}^2dz.
	\end{align*}
	Similarly, we bound the last term by
	\begin{align*}
	|III|\le \frac{1}{4}\int_0^t(a_{N,\eps}(\go,\cdot),\sum_{i=1}^d(\partial_{x_i}\theta_\ell(\go,\cdot,z))^2)dz+
	2^{d-1}\ol b_1\ol b_2 \int_0^t\|\theta_\ell(\go,\cdot,z)\|_H^2dz.
	\end{align*}
	We now plug in the estimates for $I-III$ in Eq.~\eqref{eq:sd1} and proceed in the fashion of Theorem~\ref{thm:exis} with Gr\"onwalls inequality to arrive at
	\begin{align*}
	\sup_{t\in\bT}\|\theta_\ell\|^2_{t,*}&\le C(1+1/a_{N,\eps,-}(\go))\big(\|\theta_\ell(\go,\cdot,0)\|_H^2
	+\|\partial_t\theta_\ell(\go,\cdot,\cdot)\|_{L^2(\bT;H)}\|(1-\cI_\ell)u_{N,\eps}(\go,\cdot,\cdot)\|_{L^2(\bT;H)}\\
	&\qquad+a_{N,\eps,+}(\go)\|(1-\cI_\ell)u_{N,\eps}(\go,\cdot,\cdot)\|_{L^2(\bT;V)}^2\big). 
	\end{align*}
	
	By Lemma~\ref{lem:dtu_H} it holds that $\partial_t\theta_\ell\in L^r(\gO;L^2(\bT;H)$ for any $r\in(2,p)$ with $p$ as in Assumption~\ref{ass:EV22}(iii).
	Taking expectations, using H\"older's inequality and $1/a_{N,\eps,-},a_{N,\eps,+}\in L^q(\gO;\bR)$ for all $q\ge 1$ then yields 
	\begin{align*}
	\bE(\sup_{t\in\bT}\|\theta_\ell\|^2_{t,*})^{1/2}
	&\le C \Big(\|\theta_\ell(\go,\cdot,0)\|_{L^p(\gO;H)}
	+\|(1-\cI_\ell)u_{N,\eps}(\go,\cdot,\cdot)\|_{L^r(\gO;L^2(\bT;H))}^{1/2}\\
	&\qquad+\|(1-\cI_\ell)u_{N,\eps}(\go,\cdot,\cdot)\|_{L^r(\gO;L^2(\bT;V))}\Big).
	\end{align*}
	By Assumption~\ref{ass:EV22}(iii) $u_0\in L^p(\gO;H^{1+\epsilon}(\cD))$ for some $\epsilon>0$,  which yields with Lemma~\ref{lem:interpolation}
	\bee
	\|\theta_\ell(\go,\cdot,0)\|_{L^p(\gO;H)}= \|(1-\cI_\ell)u_0\|_{L^p(\gO;H)}\le C\ol h_\ell^{1+\epsilon}.
	\eee
	Furthermore, Lemmas~\ref{lem:interpolation} and~\ref{lem:regularity} yield
	\bee
	\|(1-\cI_\ell)u_{N,\eps}(\go,\cdot,\cdot)\|_{L^r(\gO;L^2(\bT;H))}^{1/2}
	\le C\ol h_\ell^{(\gk+1)/2}\bE\Big(\big(\int_0^T\sum_{i=1}^\tau \|u_{N,\eps}\|_{H^{1+\gk}(\cT_i)}^2dt\big)^{r/2}\Big)^{1/2r}
	\le C\ol h_\ell^{(\gk+1)/2}
	\eee
	as well as 
	\bee
	\|(1-\cI_\ell)u_{N,\eps}(\go,\cdot,\cdot)\|_{L^r(\gO;L^2(\bT;V))}\le C\ol h_\ell^\gk \bE\Big(\big(\int_0^T\sum_{i=1}^\tau \|u_{N,\eps}\|_{H^{1+\gk}(\cT_i)}^2dt\big)^{r/2}\Big)^{1/2r}\le C\ol h_\ell^\gk.
	\eee
	The claim now follows since $0<\gk$, $\ol h_\ell\le 1$.
\end{proof}

\begin{rem}\label{rem:fem_rate}
	To ensure that the convergence of order $\ol h_\ell^\gk$ in Theorem~\ref{thm:semi_error_a} is not affected by the Gaussian field $W$, Assumption~\ref{ass:EV22}(i) cannot be relaxed. 
	For instance, given that $2\ga>\gb$, it follows from \cite[Proposition 3.4]{C12} that $a$ is piecewise H\"older-continuous with exponent $\varrho<\gb/2\ga$ and we may only expect a rate of order $\ol h_\ell^{\min(\varrho,\gk)}$, see \cite[Section 3]{CST13}, \cite[Section 5]{G13} and \cite[Chapter 10.1]{H10}.  
	In fact, we discuss an example with $\varrho=1/2-\epsilon$ in Section~\ref{sec:num2} and show that we only achieve a convergence rate of approximately $\ol h_\ell^{1/2}$ even for sample-adapted FE.
\end{rem}

\subsection{Temporal discretization}\label{subsec:temp_disc}
In the remainder of this section, we introduce a stable temporal discretization for the semi-discrete Problem~\eqref{eq:semi_var} and derive the corresponding mean-square error. To this end, we fix $\go\in\gO$ and let $u_{N,\eps,\ell}(\go,\cdot,\cdot)$ denote the sample-adapted semi-discrete approximation of $u_{N,\eps}(\go,\cdot,\cdot)$ from Eq.~\eqref{eq:semi_var}.
For a fully discrete formulation of Problem~\eqref{eq:semi_var}, we consider a time grid $0=t_0<t_1<\dots<t_n=T$ in $\bT$ for some $n\in\bN$. The temporal derivative at $t_i$ is approximated by the backward difference 
\bee
\partial_t u_{N,\eps,\ell}(\go,\cdot,t_i)\approx\frac{u_{N,\eps,\ell}(\go,\cdot,t_i)- u_{N,\eps,\ell}(\go,\cdot,t_{i-1})}{t_i-t_{i-1}},\quad i=1,\dots,n.
\eee
This yields the fully discrete problem to find $(u_{N,\eps,\ell}^{(i)}(\go,\cdot),\,i=0,\dots,n)\subset V_\ell(\go)$ such that for all $i=1,\dots,n$ and $v_{\ell,\go}\in V_\ell(\go)$
\be\label{eq:full_var}
\begin{split}
	\frac{1}{t_i-t_{i-1}}(u_{N,\eps,\ell}^{(i)}(\go,\cdot)-u_{N,\eps,\ell}^{(i-1)}(\go,\cdot),v_{\ell,\go})+B^{N,\eps}_{\go}( u_{N,\eps,\ell}^{(i)}(\go,\cdot),v_{\ell,\go})
	&=F_{t_i,\go}(v_{\ell,\go})\\
	u^{(0)}_{N,\eps,\ell}(\go,\cdot)&=\cI_\ell u_0(\go,\cdot).
\end{split}
\ee
For convenience, we assume an equidistant temporal grid with fixed time step $\gD t:=t_i-t_{i-1}>0$. 
The fully discrete solution is now given by  
\bee
u^{(i)}_{N,\eps,\ell}(\go,x)=\sum_{j=1}^{d_\ell}c_{i,j}(\go)v_{j,\go}(x),\quad i=1,\dots,n,
\eee
where the coefficient vector $\mathbf{c_i}(\go):=(c_{i,1}(\go),\dots,c_{i,d_\ell}(\go))^T$ solves the linear system of equations
\bee
(\mathbf M(\go)+\gD t\mathbf A(\go)) {\bf c_i(\go)}
=\gD t\mathbf F(\go,t_i)+\mathbf M(\go){\bf c_{i-1}(\go)}
\eee
in every discrete point $t_i$. The mass matrix consists of the entries $(\mathbf M(\go))_{jk}:=(v_{j,\go},v_{k,\go})$, the stiffness matrix and load vector are given by $(\mathbf A(\go))_{jk}=B^{N,\eps}_\go(v_{j,\go},v_{k,\go})$ and $(\mathbf F(\go,t_i))_j=F_{t_i,\go}(v_{j,\go})$ for $j,k\in\{1,\dots,d_\ell(\go)\}$, respectively, as in the semi-discrete case. 
The initial vector $c_0$ consists of the basis coefficients of $\cI_\ell u_0\in V_\ell$ with respect to $\{v_{1,\go},\dots,v_{d_\ell(\go),\go}\}$.
To extend the fully discrete solution $(u_{N,\eps,\ell}^{(i)}(\go,\cdot),i=0,\dots,n)$ to $\bT$, we define the linear interpolation 
\bee
\ol u_{N,\eps,\ell}(\go,\cdot,t):=(u_{N,\eps,\ell}^{(i)}(\go,\cdot)-u_{N,\eps,\ell}^{(i-1)}(\go,\cdot))\frac{(t-t_{i-1})}{\gD t}+ u_{N,\eps,\ell}^{(i-1)}(\go,\cdot),\quad t\in[t_{i-1},t_i],\quad i=1,\dots,n,
\eee
and are, therefore, able to estimate the resulting error with respect to the parabolic norm.
\begin{thm}\label{thm:full_error_a}
	Let Assumption~\ref{ass:EV22} hold, let $(u_{N,\eps,\ell}^{(i)},i=0,\dots,n)$ be the fully discrete sample-adapted approximation of $u_{N,\eps}$ as in Eq.~\eqref{eq:full_var} and let $\ol u_{N,\eps,\ell}$ be the linear interpolation in $\bT$.
	Then, 
	\begin{align*}
	\bE\big(\sup_{t\in\bT}\|u_{N,\eps,\ell}-\ol u_{N,\eps,\ell}\|^2_{*,t}\big)^{1/2}\le C\gD t.
	\end{align*}
\end{thm}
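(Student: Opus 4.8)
Here I use the shorthand $\eta^{(i)}:=u_{N,\eps,\ell}(\go,\cdot,t_i)-u^{(i)}_{N,\eps,\ell}(\go,\cdot)\in V_\ell(\go)$ (so $\eta^{(0)}=0$) and write $\Pi_\gD$ for piecewise‑linear interpolation in time on the grid $0=t_0<\dots<t_n=T$. The plan is to run a pathwise backward–Euler energy estimate, using that $u_{N,\eps,\ell}(\go,\cdot,\cdot)$ and the iterates $u^{(i)}_{N,\eps,\ell}(\go,\cdot)$ live in the same finite‑dimensional space $V_\ell(\go)$, and then integrate over $\gO$. First I would fix $\go\in\gO$ and decompose
\[
u_{N,\eps,\ell}-\ol u_{N,\eps,\ell}=\big(u_{N,\eps,\ell}-\Pi_\gD u_{N,\eps,\ell}\big)+\big(\Pi_\gD u_{N,\eps,\ell}-\ol u_{N,\eps,\ell}\big),
\]
where the second term is the time‑interpolant of the nodal errors $(\eta^{(i)})_i$ (since $\ol u_{N,\eps,\ell}$ is exactly the linear interpolant of the $u^{(i)}_{N,\eps,\ell}$), and the first term is a pure temporal interpolation error of the time‑smooth semi‑discrete solution.

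For the nodal errors I would subtract Eq.~\eqref{eq:full_var} from Eq.~\eqref{eq:semi_var} evaluated at $t=t_i$ to get, for all $v_{\ell,\go}\in V_\ell(\go)$,
\[
\tfrac{1}{\gD t}(\eta^{(i)}-\eta^{(i-1)},v_{\ell,\go})+B^{N,\eps}_\go(\eta^{(i)},v_{\ell,\go})=(\rho^{(i)},v_{\ell,\go}),
\]
with consistency error $\rho^{(i)}:=\partial_t u_{N,\eps,\ell}(\go,\cdot,t_i)-\gD t^{-1}\big(u_{N,\eps,\ell}(\go,\cdot,t_i)-u_{N,\eps,\ell}(\go,\cdot,t_{i-1})\big)$. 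Writing $\rho^{(i)}$ in integral form gives $\|\rho^{(i)}\|_H\le\int_{t_{i-1}}^{t_i}\|\partial_{tt}u_{N,\eps,\ell}(\go,\cdot,s)\|_H\,ds\le(\gD t)^{1/2}\|\partial_{tt}u_{N,\eps,\ell}(\go,\cdot,\cdot)\|_{L^2(t_{i-1},t_i;H)}$. Testing with $v_{\ell,\go}=\eta^{(i)}$, using $(\eta^{(i)}-\eta^{(i-1)},\eta^{(i)})\ge\tfrac12(\|\eta^{(i)}\|_H^2-\|\eta^{(i-1)}\|_H^2)$, bounding the advection part of $B^{N,\eps}_\go$ exactly as in the proof of Theorem~\ref{thm:exis} (Remark~\ref{rem:advection} together with Ineq.~\eqref{eq:nabla_norm}, absorbing half into the diffusion term and half into $\|\eta^{(i)}\|_H^2$), summing over $i$ and invoking the discrete Gr\"onwall inequality for $\gD t$ small, yields the pathwise estimate
\[
\max_{1\le m\le n}\|\eta^{(m)}\|_H^2+a_{N,\eps,-}(\go)\,\gD t\sum_{i=1}^n|\eta^{(i)}|_{H^1(\cD)}^2\le C\,e^{CT}\,(\gD t)^2\,\|\partial_{tt}u_{N,\eps,\ell}(\go,\cdot,\cdot)\|^2_{L^2(\bT;H)}.
\]

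The next, and \textbf{main}, step is to establish the second‑order temporal regularity of the semi‑discrete solution, i.e. an analogue of Lemma~\ref{lem:dtu_H} for $\partial_{tt}u_{N,\eps,\ell}$ (and for $\partial_t u_{N,\eps,\ell}$ in the $V$‑norm, needed below). Differentiating the finite‑dimensional ODE underlying Eq.~\eqref{eq:semi_var} in time — which is legitimate since $\partial_t f\in L^p(\gO;L^2(\bT;H))$ by Assumption~\ref{ass:EV22}(iii) — shows that $w:=\partial_t u_{N,\eps,\ell}(\go,\cdot,\cdot)$ solves the same type of linear parabolic problem in $V_\ell(\go)$ with source $\partial_t f$ and initial value $w(\go,\cdot,0)\in V_\ell(\go)$ determined by $(w(\go,\cdot,0),v_{\ell,\go})=(f(\go,\cdot,0),v_{\ell,\go})-B^{N,\eps}_\go(\cI_\ell u_0(\go,\cdot),v_{\ell,\go})$, the value $f(\go,\cdot,0)\in H$ being well defined by Lemma~\ref{lem:a2bs_calc}. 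Applying the energy estimates of Theorem~\ref{thm:exis} and Lemma~\ref{lem:dtu_H} to $w$ then controls $\|\partial_{tt}u_{N,\eps,\ell}(\go,\cdot,\cdot)\|_{L^2(\bT;H)}$ and $\|\partial_t u_{N,\eps,\ell}(\go,\cdot,\cdot)\|_{L^2(\bT;V)}$ by powers of $1/a_{N,\eps,-}(\go)$ times $\|w(\go,\cdot,0)\|_{\cdot}$, $\|f(\go,\cdot,\cdot)\|_{L^2(\bT;H)}$ and $\|\partial_t f(\go,\cdot,\cdot)\|_{L^2(\bT;H)}$; by Assumption~\ref{ass:EV22}(iii) (with $p>2$) and the uniform $L^q(\gO;\bR)$‑bounds on $1/a_{N,\eps,-}$, $a_{N,\eps,+}$ from Lemma~\ref{lem:a2}, the resulting quantity $\mathcal R(\go)$ lies in $L^r(\gO;\bR)$ for some $r>2$. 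The delicate point is bounding $\|w(\go,\cdot,0)\|$: because $\cI_\ell u_0(\go,\cdot)$ is only a finite element function, $B^{N,\eps}_\go(\cI_\ell u_0,v_{\ell,\go})$ is controlled in $\|v_{\ell,\go}\|_V$ rather than $\|v_{\ell,\go}\|_H$, so one has to pass through the discrete elliptic operator on $V_\ell(\go)$ and the interpolation estimate of Lemma~\ref{lem:interpolation} ($\|\cI_\ell u_0\|_V\le C\|u_0\|_{H^{1+\epsilon}(\cD)}$, using $\ol h_\ell\le1$); the constant in the final bound may therefore depend on the discretisation level $\ell$ (and on $N,\eps$), but it is independent of $\gD t$, which is all that is claimed.

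Finally I would reassemble. Representing $(1-\Pi_\gD)u_{N,\eps,\ell}$ via the Green's function of $-\partial_{tt}$ with Dirichlet data on each subinterval gives $\sup_{t\in\bT}\|(1-\Pi_\gD)u_{N,\eps,\ell}(\go,\cdot,t)\|_H\le C(\gD t)^{3/2}\|\partial_{tt}u_{N,\eps,\ell}(\go,\cdot,\cdot)\|_{L^2(\bT;H)}$, while a first‑order temporal interpolation estimate gives $\big(\int_0^T|(1-\Pi_\gD)u_{N,\eps,\ell}(\go,\cdot,t)|_{H^1(\cD)}^2dt\big)^{1/2}\le C\gD t\,\|\partial_t u_{N,\eps,\ell}(\go,\cdot,\cdot)\|_{L^2(\bT;V)}$. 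For the interpolant of the nodal errors, convexity yields $\sup_{t\in\bT}\|(\Pi_\gD u_{N,\eps,\ell}-\ol u_{N,\eps,\ell})(\go,\cdot,t)\|_H\le\max_m\|\eta^{(m)}\|_H$ and $\int_0^T|(\Pi_\gD u_{N,\eps,\ell}-\ol u_{N,\eps,\ell})(\go,\cdot,t)|_{H^1(\cD)}^2dt\le 2\gD t\sum_i|\eta^{(i)}|_{H^1(\cD)}^2$. Combining all bounds gives $\sup_{t\in\bT}\|u_{N,\eps,\ell}-\ol u_{N,\eps,\ell}\|_{*,t}^2\le C(1+1/a_{N,\eps,-}(\go))(\gD t)^2\,\mathcal R(\go)^2$. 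Taking expectations and applying H\"older's inequality with exponents $r/2$ and $(r/2)'$ separates $1+1/a_{N,\eps,-}\in L^{q}(\gO;\bR)$ for every $q$ (Lemma~\ref{lem:a2}) from $\mathcal R\in L^r(\gO;\bR)$, $r>2$, which yields $\bE\big(\sup_{t\in\bT}\|u_{N,\eps,\ell}-\ol u_{N,\eps,\ell}\|_{*,t}^2\big)^{1/2}\le C\gD t$. Strong measurability of $\ol u_{N,\eps,\ell}:\gO\to L^2(\bT;V)$, which makes the expectation meaningful, follows along the lines of Proposition~\ref{prop:meas2}, the iterates being obtained by solving linear systems with $\cF$‑measurable matrices.
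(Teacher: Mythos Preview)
Your decomposition into the temporal interpolation error of the semi-discrete solution plus the interpolant of the nodal errors $\eta^{(i)}$ is exactly the splitting the paper uses. The genuine difference lies in how the consistency error is treated, and this difference is not cosmetic.

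You run the classical argument via $\partial_{tt}u_{N,\eps,\ell}$: write $\rho^{(i)}$ in terms of the second time derivative, then establish $\partial_{tt}u_{N,\eps,\ell}\in L^2(\bT;H)$ by differentiating the semi-discrete ODE. The paper never touches $\partial_{tt}u_{N,\eps,\ell}$. Instead it integrates the semi-discrete equation over $[t_{i-1},t_i]$ and subtracts the fully discrete one, so that the right-hand side of the error equation is
\[
\int_{t_{i-1}}^{t_i}\Big(B^{N,\eps}_\go\big(u_{N,\eps,\ell}(\go,\cdot,t_i)-u_{N,\eps,\ell}(\go,\cdot,t),v_{\ell,\go}\big)+\dualpair{V'}{V}{f(\go,\cdot,t)-f(\go,\cdot,t_i)}{v_{\ell,\go}}\Big)\,dt,
\]
measured in $V'$. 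The increment $u_{N,\eps,\ell}(\go,\cdot,t)-u_{N,\eps,\ell}(\go,\cdot,t_{i-1})$ itself solves the same semi-discrete problem on $[t_{i-1},t_i]$ with \emph{zero} initial data and source $f(\go,\cdot,t)-f(\go,\cdot,t_{i-1})=\int_{t_{i-1}}^t\partial_t f$, so the parabolic energy estimate of Theorem~\ref{thm:exis} gives directly
\[
\sup_{t\in[t_{i-1},t_i]}\|u_{N,\eps,\ell}(\go,\cdot,t)-u_{N,\eps,\ell}(\go,\cdot,t_{i-1})\|_H^2+\int_{t_{i-1}}^{t_i}|u_{N,\eps,\ell}(\go,\cdot,t)-u_{N,\eps,\ell}(\go,\cdot,t_{i-1})|_{H^1(\cD)}^2\,dt
\le C(1+a_{N,\eps,-}^{-1})\,\gD t^2\,\|\partial_t f\|^2_{L^2([t_{i-1},t_i];H)}.
\]
This same inequality also bounds the first part of your splitting. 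Everything is then controlled by $\|\partial_t f\|_{L^p(\gO;L^2(\bT;H))}$ together with $a_{N,\eps,\pm}$, and the resulting constant is independent of $\ell,N,\eps$.

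Your concession that ``the constant may depend on $\ell$'' is not harmless. Under Assumption~\ref{ass:EV22}(iii) one has only $u_0\in H^{1+\epsilon}(\cD)$, so $\cI_\ell u_0$ has no extra regularity; bounding $\|\partial_t u_{N,\eps,\ell}(\go,\cdot,0)\|_V$ (needed for the analogue of Lemma~\ref{lem:dtu_H} applied to $w=\partial_t u_{N,\eps,\ell}$) forces an inverse inequality and produces a factor $\ol h_\ell^{-1}$ (in fact $\ol h_\ell^{-2}$ after tracking through). A constant that blows up as $\ol h_\ell\to 0$ is precisely what the paper cannot afford: Corollary~\ref{cor:total_error} sums the four error contributions with a single uniform $C$, and the whole point of the temporal bound is that it holds \emph{uniformly} in the spatial discretisation. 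The paper's route through increments of $u_{N,\eps,\ell}$ rather than its second derivative is what delivers this uniformity with no regularity on $u_0$ beyond what is already assumed.
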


\begin{proof}
	We start by investigating the temporal regularity of $u_{N,\eps,\ell}$. For fixed $\go\in\gO$ and $0\le t_{i-1}<t_i\le T$ note that $w_i(\go,\cdot,t):=u_{N,\eps,\ell}(\go,\cdot,t)-u_{N,\eps,\ell}(\go,\cdot,t_{i-1})$ solves the variational problem 
	\begin{align*}
	&\dualpair{V'}{V}{\partial_t w_i(\go,\cdot,t)}{v_{\ell,\go}}
	+B^{N,\eps}_\go(w_i(\go,\cdot,t),v_{\ell,\go})
	=\dualpair{V'}{V}{f(\go,\cdot,t)- f(\go,\cdot,t_{i-1})}{v_{\ell,\go}}
	\end{align*}
	for $t\in[t_{i-1},t_i]$ and $v_{\ell,\go}\in V_\ell(\go)$ with initial condition $w(\go,\cdot,t_{i-1})=0$.
	Therefore, in the fashion of Theorem~\ref{thm:exis}, we obtain the pathwise parabolic estimate
	\be\label{eq:li_est}
	\begin{split}
		\sup_{t\in[t_{i-1},t_i]}\|w_i(\go,\cdot,t)\|^2_H+\int_{t_{i-1}}^{t}|w_i(\go,\cdot,z)|_{H^1(\cD)}^2dz
		\le &C(1+1/a_{N,\eps,-}(\go))\int_{t_{i-1}}^{t_i}\|f(\go,\cdot,z)-f(\go,\cdot,t_{i-1})\|^2_Hdz\\
		=&C(1+1/a_{N,\eps,-}(\go))\int_{t_{i-1}}^{t_i}\|\int_z^{t_i}\partial_t f(\go,\cdot,\widetilde z)d\widetilde z\|^2_{H}dz\\
		=&C(1+1/a_{N,\eps,-}(\go))\\
		&\quad\cdot\int_{t_{i-1}}^{t_i} \|\int_{t_{i-1}}^{t_i}\1_{[t_{i-1},t]}(\widetilde z)\partial_t f(\go,\cdot,\widetilde z)d\widetilde z\|^2_{H}dz\\
		\le&C(1+1/a_{N,\eps,-}(\go))\\
		&\quad\cdot\int_{t_{i-1}}^{t_i} (z-t_{i-1})dz\int_{t_{i-1}}^{t_i}\|\partial_t f(\go,\cdot,z)\|^2_{H}dz\\
		=&C(1+1/a_{N,\eps,-}(\go)) \frac{\gD t^2}{2}\|\partial_t f(\go,\cdot,\cdot)\|^2_{L^2([t_i,t_{i-1}];H)}.
	\end{split}
	\ee
	For the first identity we have used Lemma~\ref{lem:a2bs_calc}, the second estimate follows with H\"older's inequality. 
	Now let $\ol{\ol u}_{N,\eps,\ell}$ be the linear interpolation of the semi-discrete solution $u_{N,\eps,\ell}$ at the nodes $t_0,\dots,t_n$ and consider the splitting 
	\begin{align*}
	\bE\big(\sup_{t\in\bT}\|u_{N,\eps,\ell}-\ol u_{N,\eps,\ell}\|^2_{*,t}\big)^{1/2}
	&\le \bE\big(\sup_{t\in\bT}\|u_{N,\eps,\ell}-\ol{\ol u}_{N,\eps,\ell}\|^2_{*,t}\big)^{1/2}+\bE\big(\sup_{t\in\bT}\|\ol{\ol u}_{N,\eps,\ell}-\ol u_{N,\eps,\ell}\|^2_{*,t}\big)^{1/2}=:I+II.
	\end{align*}
	By Ineq.~\eqref{eq:li_est} it follows that 
	\begin{align*}
	\sup_{t\in\bT}\|u_{N,\eps,\ell}-\ol{\ol u}_{N,\eps,\ell}\|^2_{*,t}
	&\le \max_{i=1,\dots,n}\sup_{t\in[t_{i-1},t_i]}\|w_i(\go,\cdot,t)\|^2_H
	+2\sum_{i=1}^n \int_{t_{i-1}}^{t_{i}}\|w_i(\go,\cdot,t)\|_{H^1(\cD)}^2dt\\
	&\le C(1+1/a_{N,\eps,-}(\go)) \frac{\gD t^2}{2}\|\partial_t f(\go,\cdot,\cdot)\|^2_{L^2(\bT;H)}.
	\end{align*}
	By Assumption~\ref{ass:EV22}, H\"older's inequality and Lemma~\ref{lem:a2} with $q=(1/2-1/p)^{-1}$  
	\bee
	I\le C\gD t (1+\|1/a_{N,\eps,-}\|_{L^q(\gO;\bR)})\|\partial_t f\|_{L^p(\gO;L^2(\bT;H))}\le C\gD t.
	\eee
	
	Now let $\theta^{(i)}(\go,\cdot):=u_{N,\eps,\ell}(\go,\cdot,t_i)-u_{N,\eps,\ell}^{(i)}(\go,\cdot)$ denote the pathwise time discretization error at $t_i$.
	For any $t\in [t_{i-1},t_i]$, we observe that $(\ol u_{N,\eps,\ell}-\ol{\ol u}_{N,\eps,\ell})(\cdot,\cdot,t)$ is a convex combination of $\theta_i$ and $\theta_{i-1}$, and it holds that 
	\be\label{eq:li_est2}
	II\le \bE(\max_{i=1,\dots,n} \|\theta^{(i)}\|^2_H+\gD t\sum_{j=1}^i|\theta^{(j)}|^2_{H^1(\cD)})^{1/2}.
	\ee
	Hence, it is sufficient to control the errors at each $t_i$.
	Combining Eq.~\eqref{eq:full_var} and Eq.~\eqref{eq:semi_var} yields, for $i=1,\dots,n$, 
	\begin{align*}
	&\dualpair{V'}{V}{\theta^{(i)}(\go,\cdot)-\theta^{(i-1)}(\go,\cdot)}{v_{\ell,\go}}
	+\int_{t_{i-1}}^{t_i}B^{N,\eps}_\go(\theta^{(i)}(\go,\cdot),v_{\ell,\go})dt\\
	=&\int_{t_{i-1}}^{t_i}B^{N,\eps}_\go(u_{N,\eps,\ell}(\go,\cdot,t_i)-u_{N,\eps,\ell}(\go,\cdot,t),v_{\ell,\go})
	+\dualpair{V'}{V}{f(\go,\cdot,t)- f(\go,\cdot,t_i)}{v_{\ell,\go}}dt\\
	:=&\int_{t_{i-1}}^{t_i} \dualpair{V'}{V}{\ol f_i(\go,\cdot,t)}{v_{\ell,\go}}dt
	\end{align*}
	and initial condition $u_{N,\eps,\ell}(\go,\cdot,0)-u_{N,\eps,\ell}^{(0)}(\go,\cdot)=0$.
	We now test against $v_{\ell,\go}=\theta^{(i)}(\go,\cdot)$, sum until $i$ and use the discrete Gr\"onwall inequality to obtain 
	(as in Theorem~\ref{thm:exis}) the discrete estimate 
	\begin{align*}
	\max_{i=1,\dots,n} \|\theta^{(i)}(\go,\cdot)\|^2_H+\gD t\sum_{j=1}^i|\theta^{(j)}(\go,\cdot)|^2_{H^1(\cD)}
	&\le C(1+1/a_{N,\eps,-}(\go)) \sum_{i=1}^n \|\ol f_i(\go,\cdot,\cdot)\|^2_{L^2([t_i,t_{i-1}];V')}\\
	&\le C(1+1/a_{N,\eps,-}(\go)) \\
	&\quad \cdot \Big(\sum_{i=1}^n \int_{t_{i-1}}^{t_i} |u_{N,\eps}(\cdot,\cdot,t)-u_{N,\eps}(\cdot,\cdot,t_i)|^2_{H^1(\cD)}dt\\
	&\qquad +\sum_{i=1}^n\|f(\go,\cdot,t)-f(\go,\cdot,t_1)\|^2_Hdt\Big).
	\end{align*}
	Proceeding as for Ineq.~\eqref{eq:li_est}, this implies 
	\begin{align*}
	\max_{i=1,\dots,n} \|\theta^{(i)}(\go,\cdot)\|^2_H+\gD t\sum_{j=1}^i|\theta^{(j)}(\go,\cdot)|^2_{H^1(\cD)}
	\le C(1+1/a_{N,\eps,-}(\go))a_{N,\eps,+}(\go)^2 \gD t^2 \|\partial_t f(\go,\cdot,\cdot)\|^2_{L^2(\bT;H)}.
	\end{align*}
	We use Assumption~\ref{ass:EV22}, H\"older's inequality and Lemma~\ref{lem:a2} 
	\bee
	\bE\big(\max_{i=1,\dots,n} \|\theta^{(i)}\|^2_H+\gD t\sum_{j=1}^i|\theta^{(j)}|^2_{H^1(\cD)}\big)^{1/2}
	\le C\gD t \|\partial_t f\|_{L^p(\gO;L^2(\bT;H))}
	\le C\gD t,
	\eee 
	and the claim finally follows by Ineq.~\eqref{eq:li_est2}.
\end{proof}

To conclude this section, we record a bound on the overall approximation error, which is an immediate consequence of Theorems~\ref{thm:u_error2}, \ref{thm:semi_error_a} and \ref{thm:full_error_a}.
\begin{cor}\label{cor:total_error}
	Let Assumptions~\ref{ass:EV22} and~\ref{ass:fe} hold and let $\ol u_{N,\eps,\ell}$ be the linear interpolation of the fully discrete approximation of $(u^{(i)}_{N,\eps},i=0,\dots,n)$. Then, 
	\begin{align*}
	\bE\big(\sup_{t\in\bT}\|u-\ol u_{N,\eps,\ell}\|^2_{*,t}\big)^{1/2}\le C\big(\Xi_N^{1/2}+\eps^{1/s}+\overline h_\ell^\gk+\gD t\big).
	\end{align*}
\end{cor}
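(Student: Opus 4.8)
The plan is to derive the bound from a three-term triangle inequality, writing
\[
u-\ol u_{N,\eps,\ell}=(u-u_{N,\eps})+(u_{N,\eps}-u_{N,\eps,\ell})+(u_{N,\eps,\ell}-\ol u_{N,\eps,\ell}),
\]
and estimating the three summands by Theorems~\ref{thm:u_error2}, \ref{thm:semi_error_a} and~\ref{thm:full_error_a}, respectively. To make the splitting legitimate I would first note that for each fixed $t\in\bT$ the map $w\mapsto\|w\|_{*,t}$ is subadditive: it is the $\ell^2$-combination $\sqrt{\|w(\cdot,t)\|_H^2+\int_0^t|w(\cdot,z)|_{H^1(\cD)}^2dz}$ of the two seminorms $w\mapsto\|w(\cdot,t)\|_H$ and $w\mapsto(\int_0^t|w(\cdot,z)|_{H^1(\cD)}^2dz)^{1/2}$ on $L^2(\bT;V)\cap C(\bT;H)$, hence a seminorm by Minkowski's inequality in $\bR^2$. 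Taking $\sup_{t\in\bT}$ preserves subadditivity pointwise in $\go$, and taking the $L^2(\gO;\bR)$-norm of the resulting random variable preserves it by Minkowski's inequality in $L^2(\gO;\bR)$, so
\[
\bE\big(\sup_{t\in\bT}\|u-\ol u_{N,\eps,\ell}\|^2_{*,t}\big)^{1/2}\le\sum_{j=1}^{3}\bE\big(\sup_{t\in\bT}\|\cdot\|^2_{*,t}\big)^{1/2},
\]
with the three arguments being the three differences above.

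Next I would check that the hypotheses of the three theorems are in force under Assumptions~\ref{ass:EV22} and~\ref{ass:fe}. Assumption~\ref{ass:EV22} is a strengthening of Assumption~\ref{ass:EV2}: part (i) only adds $2\ga\le\gb$; part (ii) is unchanged; part (iii) imposes $p>2$ together with the stronger regularity $f,\partial_t f\in L^p(\gO;L^2(\bT;H))$ and $u_0\in L^p(\gO;V)\cap L^p(\gO;H^{1+\epsilon}(\cD))$, which in particular gives $f\in L^p(\gO;L^2(\bT;V'))$ and $u_0\in L^p(\gO;H)$ with $p>1$; and part (iv) imposes $s>2$ with $1/p+1/s<1/2$, which is stronger than $s>1$ with $1/p+1/s<1$. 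Hence Assumption~\ref{ass:EV2} holds and, moreover, $(1/s+1/p)^{-1}>2$, so the exponent $r=2$ lies in the admissible range $[1,(1/s+1/p)^{-1})$ of Theorem~\ref{thm:u_error2}. Consequently
\[
\bE\big(\sup_{t\in\bT}\|u-u_{N,\eps}\|^2_{*,t}\big)^{1/2}\le C\big(\Xi_N^{1/2}+\eps^{1/s}\big)
\]
with $C$ independent of $N$ and $\eps$. The second term is bounded by $C\ol h_\ell^\gk$ via Theorem~\ref{thm:semi_error_a} (whose hypotheses are exactly Assumptions~\ref{ass:EV22} and~\ref{ass:fe} together with $\ol h_\ell\le1$), again with $C$ independent of $N,\eps,\ol h_\ell$, and the third term is bounded by $C\gD t$ via Theorem~\ref{thm:full_error_a}, which needs only Assumption~\ref{ass:EV22}.

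Summing the three estimates and absorbing all constants into a single $C>0$ that does not depend on $N$, $\eps$, $\ol h_\ell$ or $\gD t$ yields the claim. There is no genuinely hard step here, since everything is quoted; the only points that deserve a line of justification are the subadditivity of $\|\cdot\|_{*,t}$ under the $\sup_t$ and $L^2(\gO;\bR)$ operations, and the verification — via the condition $1/p+1/s<1/2$ in Assumption~\ref{ass:EV22}(iii)--(iv) — that $r=2$ is an admissible exponent in Theorem~\ref{thm:u_error2} (and likewise that the moment exponents implicit in Theorems~\ref{thm:semi_error_a} and~\ref{thm:full_error_a} are compatible with the mean-square estimate).
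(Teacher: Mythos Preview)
Your proposal is correct and follows exactly the approach the paper takes: the paper states that the corollary ``is an immediate consequence of Theorems~\ref{thm:u_error2}, \ref{thm:semi_error_a} and~\ref{thm:full_error_a}'' and gives no further argument. Your justification is in fact more careful than the paper's, since you explicitly verify the subadditivity of $\sup_{t\in\bT}\|\cdot\|_{*,t}$ and check that the condition $1/p+1/s<1/2$ in Assumption~\ref{ass:EV22} makes $r=2$ admissible in Theorem~\ref{thm:u_error2}.
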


\section{Numerical experiments} \label{sec:num2}
In all of our numerical experiments we measure the root mean-square error
\bee
RMSE:=\bE(\|u(\cdot,\cdot,T)-\ol u_{N,\eps,\ell}(\cdot,\cdot,T)\|_V^2)^{1/2}.
\eee
For each given FE discretization parameter $\ol h_\ell$, we align the error contributions of $N,\eps$ and $\gD t$ such that $\Xi_N^{1/2}\simeq\eps^{1/s}\simeq\gD t\simeq \ol h_\ell$. 
Hence, the dominant source of error is the spatial discretization and Corollary~\ref{cor:total_error} yields $RMSE\le C\ol h_\ell^\gk$. 
This allows us to measure the value of $\gk$ in Assumption~\ref{ass:fe} by linear regression.
While the choices of $\gD t$ and $\eps$ are usually straightforward for given $\ol h_\ell$, we refer to \cite[Remark 5.3]{BS18b} where we describe how to achieve $\Xi_N^{1/2}\simeq\ol h_\ell$ for common examples of covariance operators $Q$.
To emphasize the advantage of the sample-adapted FE algorithm introduced in Section~\ref{sec:fem2}, we also repeat all experiments with a standard FE approach and compare the resulting errors. For the non-adapted FE algorithm, we use for a given triangulation diameter $h_\ell$ the same approximation parameters $\gD t, N$ and $\eps$ as for the corresponding sample-adapted method. This ensures that the weaker performance of this non-adapted method is due to the mismatch between FE triangulation and the discontinuities of $a$ and $b$.
We approximate the entries of the stiffness matrix for both FE approaches by the midpoint rule on each interval (in 1D) or triangle (in 2D).
If the triangulation is aligned to the discontinuities in $a$ and $b$, this adds an additional non-dominant term of order $\overline h_\ell$ to the error estimate in Corollary~\ref{cor:total_error}, see for instance \cite[Prop. 3.13]{CST13}.  
Thus, the bias stemming from the midpoint rule does not dominate the overall order of convergence in the sample-adapted algorithm.
In the other case, we cannot quantify the quadrature error due to the discontinuities on certain triangles but suggest an error of order $\ol h_\ell^{1/2}$ based on our experimental observations.

\subsection{Numerical examples in 1D}
For the first scenario in this subsection, we consider the advection-diffusion problem~\eqref{eq:pde} on the domain $\cD=(0,1)$, with $T=1$, $u_0(x)=\sin(\pi x)/10$ and source term $f\equiv1$. 
The continuous part of the jump-diffusion coefficient $a$ is given by $\ol a\equiv 0$ and $\Phi(w)=\exp(w)$, where the Gaussian field $W$ is characterized by the \textit{Mat\'ern covariance operator} 
\bee
Q_{M}:H\to H, \quad [Q_{M}\varphi](y):=\int_\cD \sigma^2\frac{2^{1-\nu}}{\gG(\nu)}\Big(\sqrt{2\nu}\frac{|x-y|}{\delta}\Big)^\nu K_{\nu}\Big(\sqrt{2\nu}\frac{|x-y|}{\delta}\Big)\varphi(x)dx\quad\text{for $\varphi\in H$},
\eee
with smoothness parameter $\nu>0$, variance $\sigma^2>0$ and correlation length $\delta>0$. 
Above, $\gG$ denotes the Gamma function and $K_\nu$ is the modified Bessel function of the second kind with $\nu$ degrees of freedom.
It is known that $W$ is mean square differentiable if $\nu>1$ and, moreover, the paths of $W$ are almost surely in $C^{\floor{\nu},\varrho}(\cD;\bR)$ with $\varrho<\nu-\floor{\nu}$ for any $\nu\ge 1/2$, see \cite[Section 2.2]{Gr15}.
The spectral basis of $Q_{M}$ may be efficiently approximated by Nystr\"om's method, see for instance \cite{SP07}.
In our experiments, we use the parameters $\nu=3/2,\, \sigma^2=1$ and $\delta=0.05$.

For each experiment in one dimension, the number of partition elements is given by $\tau=\cP+2$, where $\cP$ is Poisson-distributed with intensity parameter $5$.  
On average, this splits the domain in $7$ disjoint intervals and the diffusion coefficient has almost surely at least one discontinuity.
The position of each jump is sampled according to the measure $\gl$, which we set as the Lebesgue measure $\gl_L$ on $(\cD,\cB(\cD))$.
More precisely, let $(\widetilde x_i,i\in\bN)$ be an i.i.d. sequence of $\cU(\cD)$-random variables that are independent of $\tau$.
We take the first $\tau-1$ points of this sequence, order them increasingly and denote the ordered subset by $0<x_1<\dots<x_{\tau-1}<1$.  
This generates the random partition $\cT=\{(0,x_1),(x_1,x_2),\dots,(x_{\tau-1},1)\}$ for each realization of $\tau$.
Conditional on the random variable $\tau=\cP+2\ge2$, the distribution of each $x_i$ for $i=1,\dots,\tau-1$ is then given by
\bee
\bP(x_i\le c\,|\tau)=\frac{(\tau-1)!}{(\tau-i)!(i-1)!}c^{\tau-i}(1-c)^{i-1},\quad c\in\cD=(0,1).
\eee
This can be utilized to derive further statistics, such as the average interval width of $\cT$ given by
\bee
\bE(\bE(x_1|\tau))=\bE\big(\int_0^1c^{\tau-1}dc\big)=\bE(1/\tau)=\sum_{k=0}^\infty \frac{5^ke^{-5}}{k!}\frac{1}{k+2}\approx 0.1603
\eee
with corresponding variance $\bE(1/(\tau+1))-\bE(1/\tau)^2\approx 0.1102$.
This also shows that increasing the Poisson parameter in $\cP$ resp. $\tau$ would yield a longer average computational time, as more and smaller intervals would be sampled.
The order of spatial convergence of the sample-adapted FE scheme on the other hand remains unaffected of the distribution of $\cT$.
In the subsequent examples we vary the distribution of the jump heights $P_i$ and use the jump-advection coefficient given by 
\bee
b(\go,x):=2\sin(2\pi x)a(\go,x),\quad \go\in\gO,\;x\in\cD.
\eee
Note that we did not impose an upper deterministic bound $\ol b_2$ on $b$. 
To obtain pathwise approximations of the samples $u_{N,\eps}(\go,\cdot,\cdot)$, we use non-adapted and sample-adapted piecewise linear elements and compare both approaches. 
The FE discretization parameter is given by $\ol h_\ell=2^{-\ell}/4$ and we consider the range $\ell=1,\dots,6$. 
We approximate the reference solution $u$ for each sample using sample-adapted FE and set $u_{ref}:=\ol u_{N_8,\eps_8,8}(\cdot,\cdot,T)$, where we choose $\gD t_8\simeq\Xi_{N_8}^{1/2}\simeq\eps_8^{1/2}\simeq 2^{-10}$. The RMSE is estimated by averaging 100 samples of $\|u_{ref}-\ol u_{N,\eps,\ell}(\cdot,\cdot,T)\|_V^2$ for $\ell=1,\dots,6$. 
To subtract sample-adapted\textbackslash non-adapted approximations from the reference solution $u_{ref}$, we use a fixed grid with $2^{10}+1$ equally spaced points in $\cD$, thus the error stemming from interpolation\textbackslash prolongation may be neglected.
Given that $RMSE\approx C\ol h_\ell^\gk$, it holds that 
\bee
\log(RMSE)\approx \gk\log(\ol h_\ell)+\log(C)
\eee
and we estimate the convergence rate $\gk$ by a linear regression of the log-RMSE on the log-refinement sizes $\log(\ol h_\ell)$. 
As we consider 1D-problems in this subsection, we expect convergence rates close to one for the sample-adapted method whenever Assumption~\ref{ass:EV22} holds.

\begin{figure}[ht]
	\centering
	\subfigure{\includegraphics[scale=0.36]{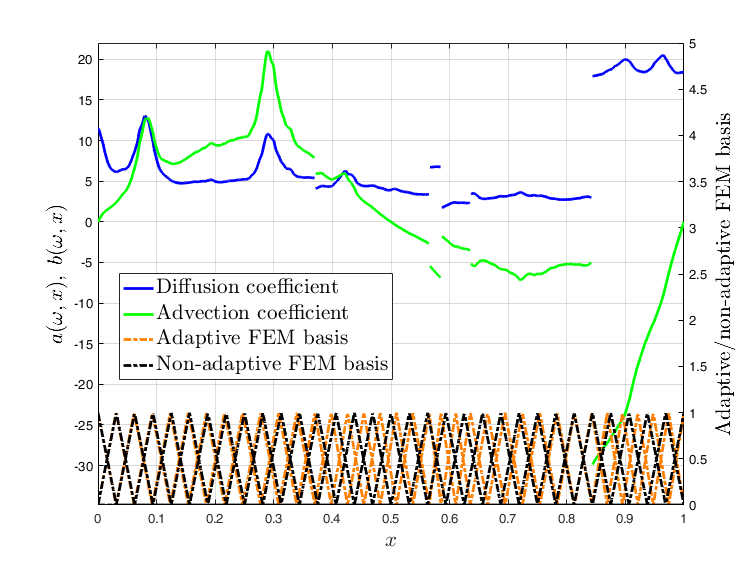}}
	\subfigure{\includegraphics[scale=0.36]{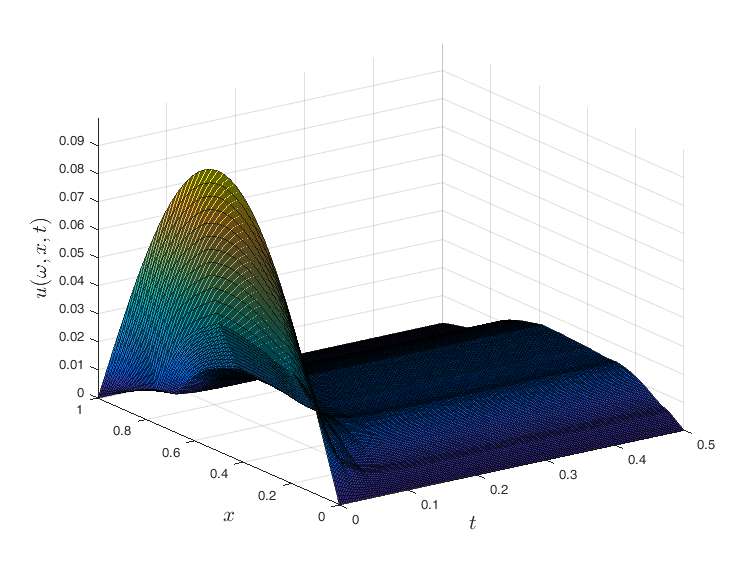}}
	\subfigure{\includegraphics[scale=0.36]{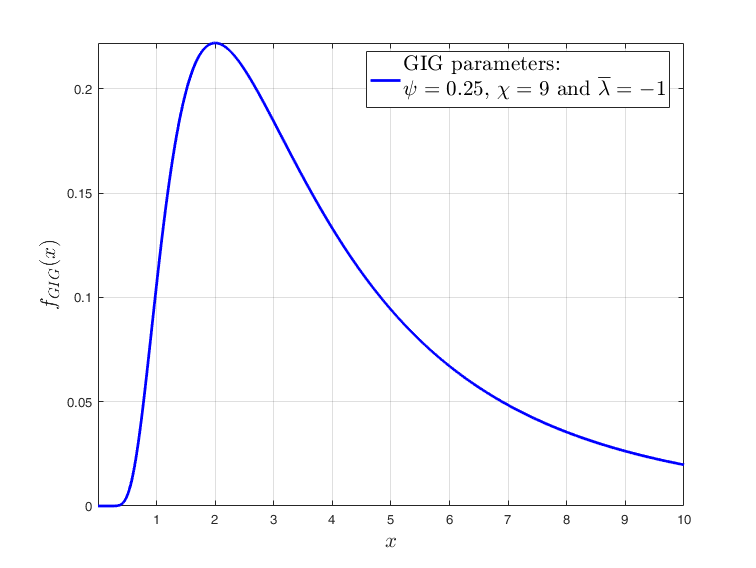}}
	\subfigure{\includegraphics[scale=0.36]{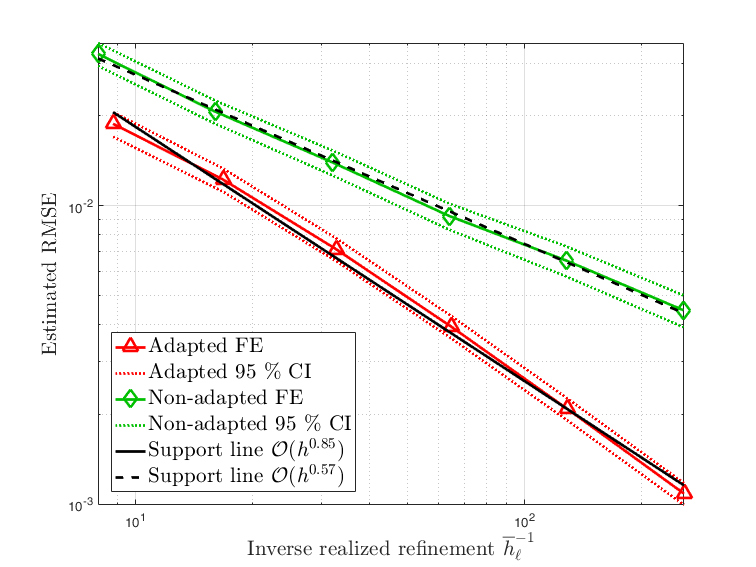}}
	\caption{First numerical example in 1D with Mat\'ern covariance operator and GIG distributed jumps. 
		Top left: Jump-diffusion/advection coefficient and adapted/non-adapted FE basis, top right: FE solution corresponding to the sample on the left and the given sample-adapted FE basis, bottom left: GIG density function and parameters,
		bottom right: estimated RMSE vs. inverse spatial refinement parameter size.}
	\label{fig:1D_GIG}
\end{figure}
In our first numerical example, the jump heights $P_i$ follow a \textit{generalized inverse Gaussian} (GIG) distribution with density
\bee
f_{GIG}(x)=\frac{(\psi/\chi)^{\gl/2}}{2K_{\gl}(\sqrt{\psi\chi})}x^{\gl-1}\exp\big(-\frac 1 2(\psi x+\chi x^{-1})\big),\quad x>0,
\eee
and parameters $\chi,\psi>0$, $\gl\in\bR$, see \cite{BN78}.
Unbiased sampling from this distribution may be rather expensive, hence we generate approximations $\widetilde P_i$ of $P_i$ by a Fourier inversion technique which guarantees that $\bE(|\widetilde P_i-P_i|^2)\le\eps$ for any desired $\eps>0$.
This allows us to adjust the sampling bias $\eps>0$ with $\ol h_\ell$ (and the corresponding $\gD t$ and $\Xi_N$) for any $\ell\in\bN_0$.
Details on the Fourier inversion algorithm, the sampling of GIG distributions and the corresponding $L^2(\gO;\bR)$-error may be found in~\cite{BS18a}.
The GIG parameters are set as $\psi=0.25, \chi=9$ and $\ol\gl=-1$, the resulting density and a sample of the coefficients are given in Fig.~\ref{fig:1D_GIG}. 
As expected, we see in Fig.~\ref{fig:1D_GIG}, that the sample-adapted algorithm converges with rate $\gk=0.85$. 
Thus, the sampling error of the GIG jump heights does not dominate the remaining error contributions. 
Compared to adapted FE, the non-adapted method converges at a significantly lower rate of $0.57$.

In Remark~\ref{rem:fem_rate}, we stated that the condition $2\ga\le\gb$ on the decay of the eigenvalues of $Q$ entails mean square differentiability of $W$ and thus a convergence rate of order $\gk$ in the sample-adapted method. We suggested that this rate will deteriorate if the paths of $W$ are only H\"older continuous with exponent $\varrho<\gk\le 1$. 
To illustrate this, we repeat the first experiment with a changed covariance operator.   
We now consider the \textit{Brownian motion covariance operator} 
\bee
Q_{BM}:H\to H,\quad [Q_{BM}\varphi](y):=\int_\cD min(x,y)\varphi(x)dx\quad\text{for $\varphi\in H$},
\eee
with eigenbasis given by $\eta_i=(2\sqrt 2/((2i+1)\pi))^2$ and $e_i(x)=\sin((2i+1)\pi x/2)$ for $i\in\bN_0$.
The paths of $W$ generated with $Q_{BM}$ are H\"older-continuous with $\varrho=1/2-\epsilon$ for any $\epsilon>0$ because $\gb=1-\epsilon$ and $\ga=1$. 
A sample of the coefficients is given in Fig.~\ref{fig:1D_Unif_BB}. The sample-adapted RMSE is smaller than the non-adapted curve and decreases slightly faster, but both errors now decay at a lower rate of roughly $\approx 1/2$ due to the lack of (piecewise) spatial regularity of $a$ and $b$. 
\begin{figure}[ht]
	\centering
	\subfigure{\includegraphics[scale=0.36]{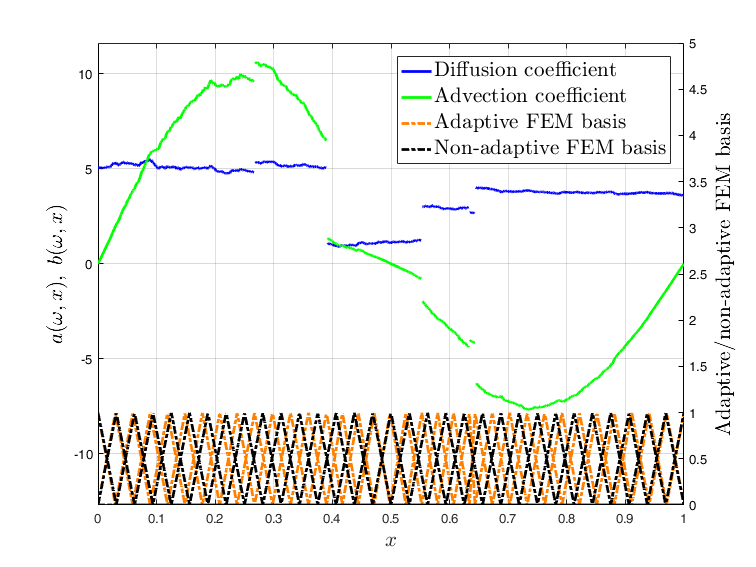}}
	\subfigure{\includegraphics[scale=0.36]{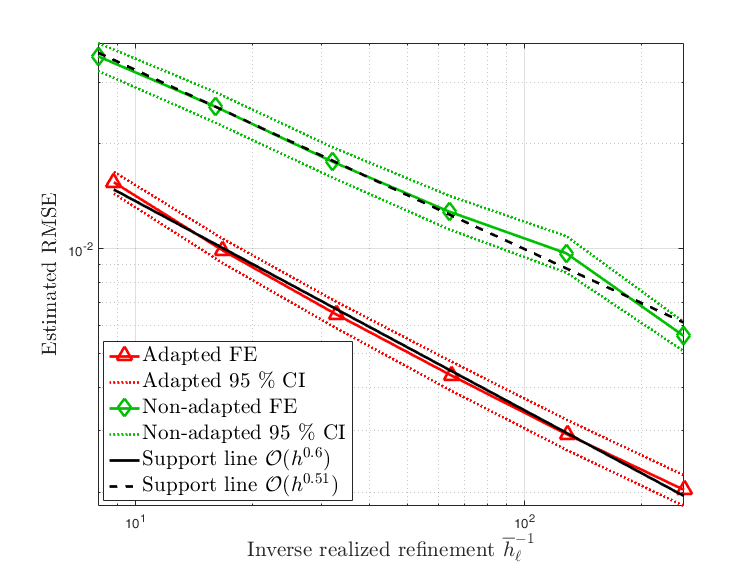}}
	\caption{Second numerical example in 1D with Brownian motion covariance operator and uniformly distributed jumps. 
		Top left: Jump-diffusion/advection coefficient and adapted/non-adapted FE basis, top right: FE solution corresponding to the sample on the left and the given sample-adapted FE basis, bottom: estimated RMSE vs. inverse spatial refinement size.}
	\label{fig:1D_Unif_BB}
\end{figure}

\subsection{Numerical examples in 2D}
In two spatial dimensions, we work on $\cD=(0,1)^2$ with $T=1$, initial data $u_0(x_1,x_2)=\frac{1}{100}\sin(\pi x_1)\sin(\pi x_2)$, source term $f\equiv1$ and assume that $\bar a\equiv0$. 
The Gaussian part of $a$ is determined by the \KL expansion
\bee
W(x)=\sum_{i\in\bN}\sqrt{\eta_i}e_i(x)Z_i,\quad x\in\cD,\quad Z_i\stackrel{i.i.d.}\sim\cN(0,1),
\eee
with spectral basis given by $\eta_i:=\sigma^2\exp(-\pi^2i^2\delta^2)$ and $e_i(x):=\sin(\pi ix_1)\sin(\pi ix_2)$ for $i\in\bN$.
Again, the parameters $\delta,\sigma^2>0$ denote the correlation length and total variance of $W$ respectively.
It can be shown that these eigenpairs solve the integral equation
\bee
\sigma^2\int_{\cD}\frac{1}{4\pi t}\exp\big(-\frac{-||x-y||_2^2}{2\delta^2}\big)e_i(y)dy=\eta_i e_i(x),\quad i\in\bN,
\eee
with $e_i=0$ on $\partial\cD$, see \cite{GN13}. 
Compared with a Gaussian field generated by a squared exponential covariance operator, this field shows a very similar behavior, except that it is zero on the boundary. It, further, has the advantage, that all expressions are available in closed form and we forgo the numerical approximation of the eigenbasis. The eigenvalues decay exponentially fast with respect to $i$, hence Assumption~\ref{ass:EV22} is fulfilled and we use the parameters $\sigma^2=0.25$ and $\delta=0.02$ for all experiments in this section.
As before, we consider a log-Gaussian random field, meaning $\Phi(w)=\exp(w)$.
To illustrate the flexibility of a jump-diffusion coefficient $a$ as in Def.~\ref{def:a2}, we vary the random partitioning of $\cD$ for each example and give a detailed description below.
We set the spatial discretization parameter to $\ol h_\ell=h_\ell=\frac{2}{5}2^{-\ell}$ and consider the cases $\ell=1,\dots,5$. To estimate the RMSE, we sample similar to the one-dimensional case the reference solution $u_{ref}:=\ol u_{N_7,\eps_7,7}(\cdot,\cdot,T)$ with $\gD t_7\simeq\Xi_{N_7}^{1/2}\simeq\eps_7^{1/2}\simeq \frac{2}{5}2^{-7}$ and average again 100 independent samples of $\|u_{ref}-\ol u_{N,\eps,\ell}(\cdot,\cdot,T)\|_V^2$. 
For interpolation/prolongation we use a reference grid with $(2^{8}+1)\times (2^{8}+1)$ equally spaced points in $\cD$.
The convergence rate, i.e. the exponent $\gk$ from Assumption~\ref{ass:fe}, in the sample-adapted method is estimated by linear regression as for the one-dimensional examples.
We further use the (unbounded) jump-advection coefficient
\bee
b(\go,x,y)=5\sin(\pi x)\sin(\pi y)a(\go,x,y) \begin{pmatrix}
	1\\ 1
\end{pmatrix},\quad \go\in\gO,\;(x,y)\in\cD
\eee
in each scenario.

In our first 2D example, we aim to imitate the structure of a heterogeneous medium. 
For this, we divide the domain by two horizontal and vertical lines. 
We assume that the horizontal resp. vertical lines do not intersect each other and thus obtain $\tau\equiv9$.
The remaining four intersection points of the lines in $\cD$ are uniformly distributed in $(0.2,0.8)^2$.
This is realized by setting $\gl$ as the two-dimensional Lebesgue-measure restricted to $(0.2,0.8)^2\subset\cD$.
Finally, we assign i.i.d. jump heights $P_i\sim\cU(0,10)$ to each partition element $\cT_i$.
Fig.~\ref{fig:2D_HM} shows a sample of the advection- and diffusion coefficient for the heterogeneous medium together with the associated (adapted) FE approximation of $u$. 
As before, the sample-adapted method is advantageous and the regression suggests that Assumption~\ref{ass:fe} holds with $\gk=0.86$.
If we use non-adapted FE, we may still recover a convergence rate of $0.66$, which is actually slightly better than the expected rate of $0.5$.
\begin{figure}[ht]
	\centering
	\subfigure{\includegraphics[scale=0.36]{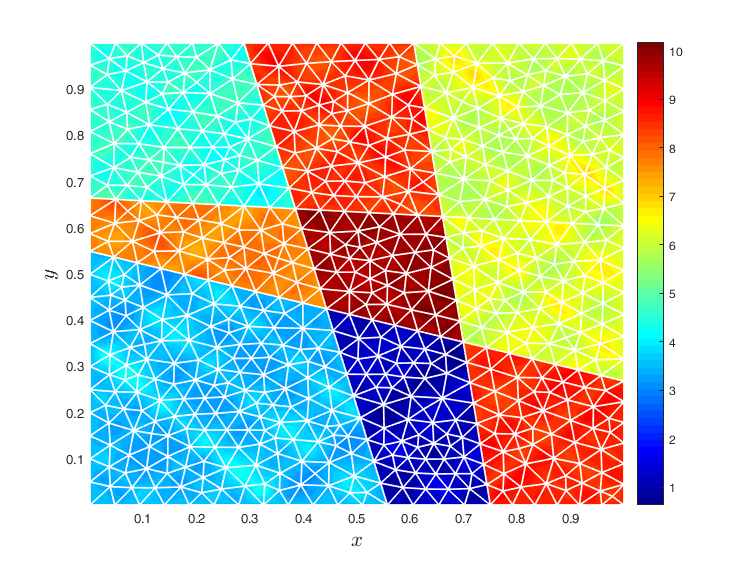}}
	\subfigure{\includegraphics[scale=0.36]{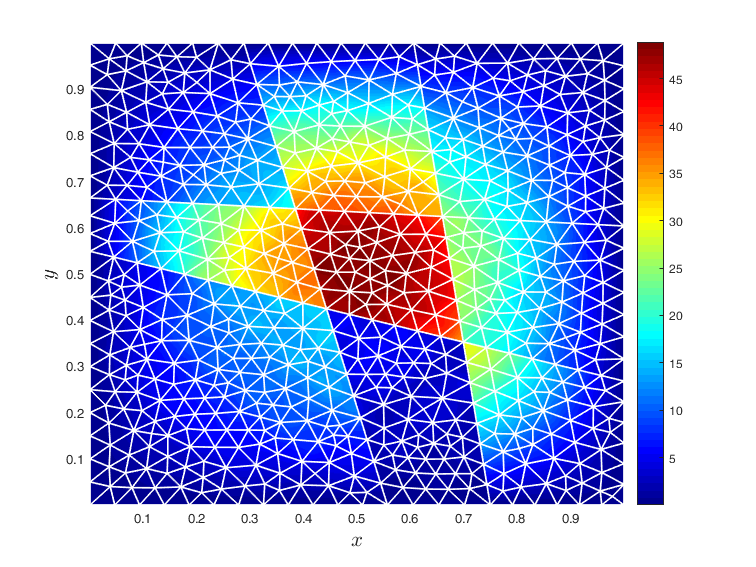}}
	\subfigure{\includegraphics[scale=0.36]{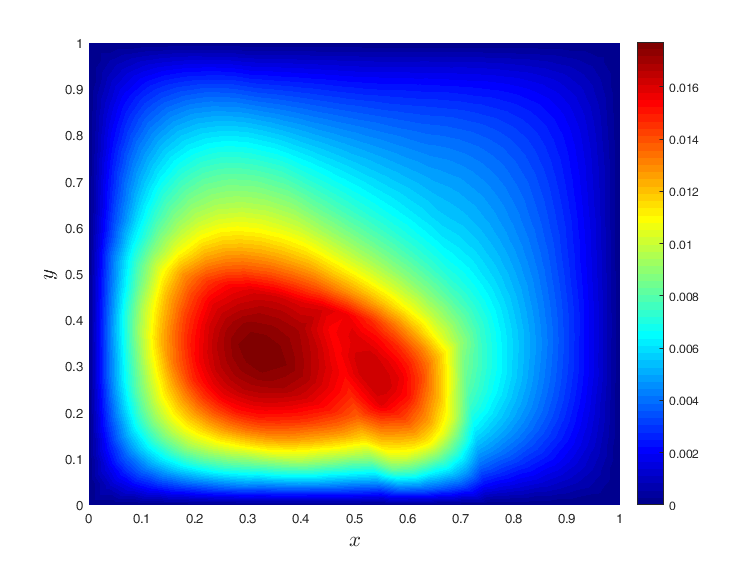}}
	\subfigure{\includegraphics[scale=0.36]{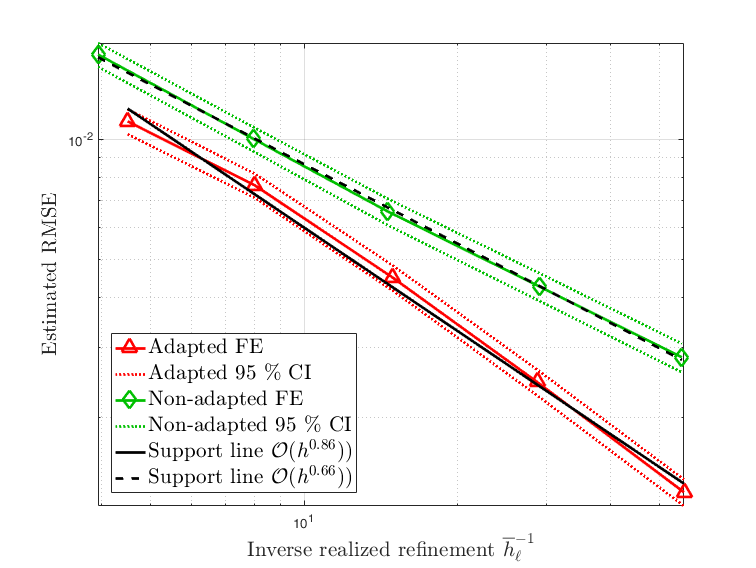}}
	\caption{First numerical example in 2D (heterogeneous medium). 
		Top left: sample of the jump-diffusion coefficient and sample-adapted triangulation, top right: sample of the jump-advection coefficient with adapted triangulation, bottom left: FE solution at $T$ corresponding to the samples and triangulations on the top, 
		bottom right: estimated RMSE vs. inverse spatial refinement $\ol h_l^{-1}$.}
	\label{fig:2D_HM}
\end{figure}

We now consider an example with lower expected regularity and pure jump field, i.e. $\ol a$ and $\Phi$ are set to zero. Therefore, we need to consider strictly positive jump heights $P_i$ to ensure well-posedness of the problem. 
We sample one $\cU([0.4,0.6]^2)$-distributed center point $x_c\in\cD$ and split the domain by a vertical and horizontal line through $x_c$. This yields a partition of $\cD$ into four squares $\cT_1-\cT_4$. We then sample a random variable $P_1\sim\cU([10^{-4},10^{-2}])$ and assign the value of $P_1$ to the lower left and the upper right partition element. The remaining elements are equipped with inverse value $P_2=P_1^{-1}$, see Fig.~\ref{fig:2D_squares} for a sample of the coefficients. From deterministic regularity theory, it is known that for given $P_1$ the solution to this problem has only $H^{1+\gk}$-regularity around $x_c$, where $\gk=\cO(P_1)$, see e.g. \cite{P01}.
Consequently, we see deteriorated convergence rates compared to the first example. The non-adapted method now performs poorly with an error decay of a rate less than $0.5$, whereas the sample-adapted method still recovers a rate of $0.7$. 
A possible explanation for this behavior is that the sample-adapted algorithm generates a mesh with respect to the singularity at $x_c$. Optimal meshes for this problem refine in the vicinity of $x_c$ and then coarsen on the interior of the partition elements, for instance \textit{graded meshes} or \textit{bisection meshes} as used in \cite{MS15} and the references therein. 

\begin{figure}[ht]
	\centering
	\subfigure{\includegraphics[scale=0.36]{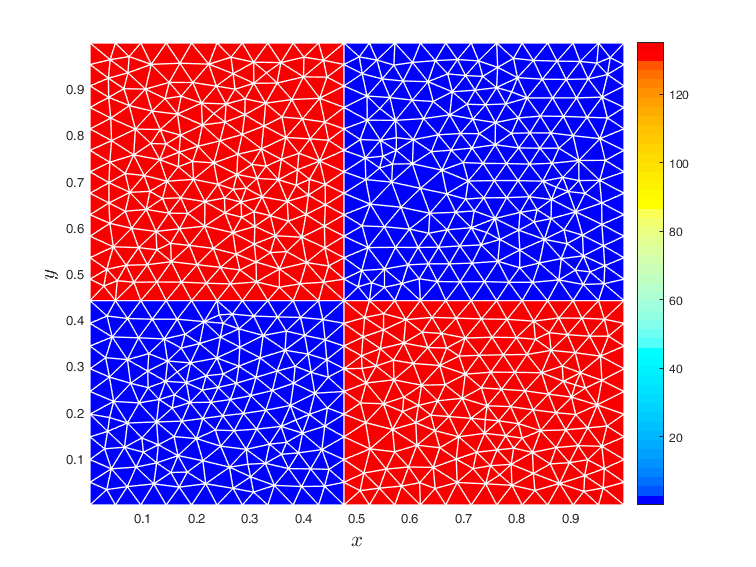}}
	\subfigure{\includegraphics[scale=0.36]{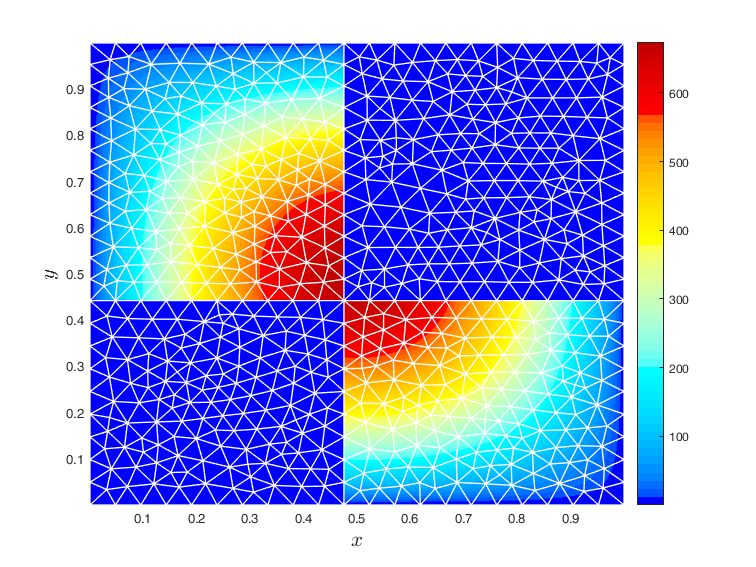}}
	\subfigure{\includegraphics[scale=0.36]{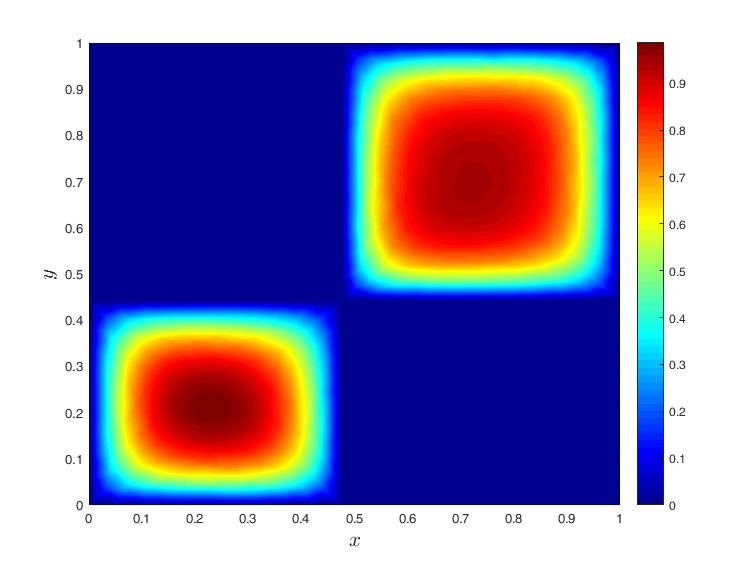}}
	\subfigure{\includegraphics[scale=0.36]{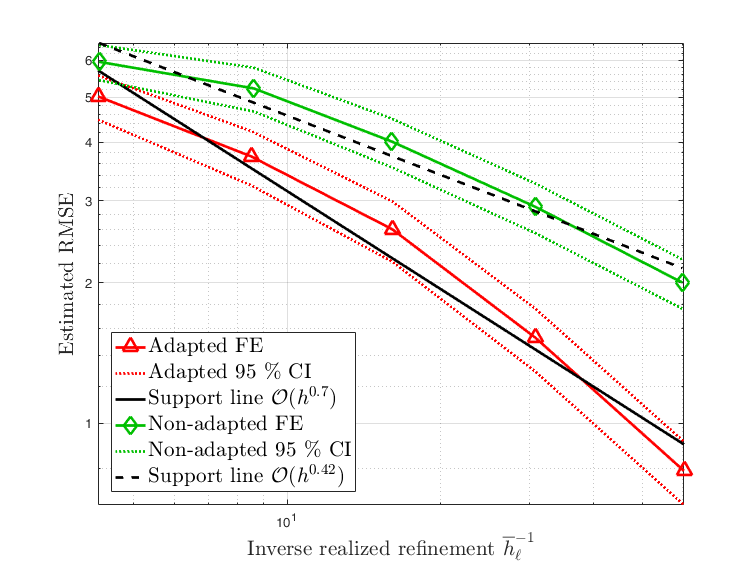}}
	\caption{Second numerical example in 2D. 
		Top left: sample of the jump-diffusion coefficient and sample-adapted triangulation, top right: sample of the jump-advection coefficient with adapted triangulation, bottom left: FE solution at $T$ corresponding to the samples and triangulations on the top, 
		bottom right: estimated RMSE vs. inverse spatial refinement parameter $\ol h_l^{-1}$.}
	\label{fig:2D_squares}
\end{figure}

To conclude, we suggest that a more effective refinement in two spatial dimensions may be achieved by \textit{h-Finite Element methods} (see \cite{S99}), i.e. by refining the sample-adapted mesh in the reentrant corners. 
A thorough analysis of this approach for general random geometries is subject to further research.   

\section*{Acknowledgements}

The research leading to these results has received funding from the Deutsche Forschungsgemeinschaft (DFG, German Research Foundation) under Germany's Excellence Strategy – EXC-2075 – 390740016 at the University of Stuttgart and it is gratefully acknowledged.
The authors would like to thank Prof. Dr. Christoph Schwab for his valuable suggestions that lead to a significant improvement of this manuscript.

\addcontentsline{toc}{section}{References}
\bibliographystyle{plain}
\bibliography{parabolic_jump_diffusion}

\begin{thebibliography}{10}

\bibitem{ABS13}
Assyr Abdulle, Andrea Barth, and Christoph Schwab.
\newblock Multilevel {M}onte {C}arlo methods for stochastic elliptic multiscale
  {PDE}s.
\newblock {\em Multiscale Modeling {\&} Simulation}, 11(4):1033--1070, 2013.

\bibitem{AT09}
Robert~J Adler and Jonathan~E Taylor.
\newblock {\em Random Fields and Geometry}.
\newblock Springer Science \& Business Media, 2009.

\bibitem{AB06}
C.~Aliprantis and K.~Border.
\newblock {\em Infinite Dimensional Analysis, a Hitchhiker's Guide}.
\newblock Springer, 2006.

\bibitem{AN14}
Christophe Audouze and Prasanth~B Nair.
\newblock Some a priori error estimates for finite element approximations of
  elliptic and parabolic linear stochastic partial differential equations.
\newblock {\em Int. Journal for Uncertainty Quant.}, 4(5), 2014.

\bibitem{BNT07}
Ivo Babu{\v{s}}ka, Fabio Nobile, and Ra{\'u}l Tempone.
\newblock A stochastic collocation method for elliptic partial differential
  equations with random input data.
\newblock {\em SIAM J. Numer. Anal.}, 45(3):1005--1034 (electronic), 2007.

\bibitem{BTZ04}
Ivo Babu{\v{s}}ka, Ra{\'u}l Tempone, and Georgios~E. Zouraris.
\newblock Galerkin finite element approximations of stochastic elliptic partial
  differential equations.
\newblock {\em SIAM J. Numer. Anal.}, 42(2):800--825 (electronic), 2004.

\bibitem{BT16}
David~A Barajas-Solano and Daniel~M Tartakovsky.
\newblock Stochastic collocation methods for nonlinear parabolic equations with
  random coefficients.
\newblock {\em SIAM/ASA Journal on Uncertainty Quant.}, 4(1):475--494, 2016.

\bibitem{BN78}
O.~E. Barndorff-Nielsen.
\newblock Hyperbolic distributions and distributions on hyperbolae.
\newblock {\em Scandinavian Journal of Statistics}, 5(3):151--157, 1978.

\bibitem{BL12}
Andrea Barth and Annika Lang.
\newblock Simulation of stochastic partial differential equations using finite
  element methods.
\newblock {\em Stochastics}, 84(2-3):217--231, April - June 2012.

\bibitem{BSZ11}
Andrea Barth, Christoph Schwab, and Nathaniel Zollinger.
\newblock Multi-level {M}onte {C}arlo {F}inite {E}lement method for elliptic
  {PDE}s with stochastic coefficients.
\newblock {\em Numerische Mathematik}, 119(1):123--161, 2011.

\bibitem{BS18a}
Andrea Barth and Andreas Stein.
\newblock Approximation and simulation of infinite-dimensional lévy processes.
\newblock {\em Stochastics and Partial Differential Equations: Analysis and
  Computations}, 6(2):286–334, 2018.

\bibitem{BS18b}
Andrea Barth and Andreas Stein.
\newblock A study of elliptic partial differential equations with jump
  diffusion coefficients.
\newblock {\em SIAM/ASA Journal on Uncertainty Quantification},
  6(4):1707–1743, 2018.

\bibitem{BS07}
Susanne Brenner and Ridgway Scott.
\newblock {\em The Mathematical Theory of Finite Element Methods}, volume~15.
\newblock Springer Science \& Business Media, 2007.

\bibitem{C12}
Julia Charrier.
\newblock Strong and weak error estimates for elliptic partial differential
  equations with random coefficients.
\newblock {\em SIAM Journal on numerical analysis}, 50(1):216--246, 2012.

\bibitem{CST13}
Julia Charrier, Robert Scheichl, and Aretha~L Teckentrup.
\newblock Finite element error analysis of elliptic {PDE}s with random
  coefficients and its application to multilevel {M}onte {C}arlo methods.
\newblock {\em SIAM Journal on Numerical Analysis}, 51(1):322--352, 2013.

\bibitem{CGST11}
K.~A. Cliffe, M.~B. Giles, R.~Scheichl, and A.~L. Teckentrup.
\newblock Multilevel {M}onte {C}arlo methods and applications to elliptic
  {PDE}s with random coefficients.
\newblock {\em Comput. Vis. Sci.}, 14(1):3--15, 2011.

\bibitem{CDS11}
Albert Cohen, Ronald DeVore, and Christoph Schwab.
\newblock Analytic regularity and polynomial approximation of parametric and
  stochastic elliptic {PDE}s.
\newblock {\em J. Analysis and Applications}, 9(1):11--47, 2011.

\bibitem{DDS19}
Gianluca Detommaso, Tim Dodwell, and Rob Scheichl.
\newblock Continuous level monte carlo and sample-adaptive model hierarchies.
\newblock {\em SIAM/ASA Journal on Uncertainty Quantification}, 7(1):93--116,
  2019.

\bibitem{DGV12}
Eleonora Di~Nezza, Giampiero Palatucci, and Enrico Valdinoci.
\newblock Hitchhiker's guide to the fractional {S}obolev spaces.
\newblock {\em Bulletin des Sciences Math{\'e}matiques}, 136(5):521--573, 2012.

\bibitem{D96}
Zhonghai Ding.
\newblock A proof of the trace theorem of {S}obolev spaces on {L}ipschitz
  domains.
\newblock {\em Proceedings of the American Mathematical Society},
  124(2):591--600, 1996.

\bibitem{EMN16}
Martin Eigel, Christian Merdon, and Johannes Neumann.
\newblock An adaptive multilevel monte carlo method with stochastic bounds for
  quantities of interest with uncertain data.
\newblock {\em SIAM/ASA Journal on Uncertainty Quantification},
  4(1):1219--1245, 2016.

\bibitem{E10}
L.C. Evans.
\newblock {\em Partial Differential Equations}.
\newblock American Mathematical Society, 2010.

\bibitem{FST05}
Philipp Frauenfelder, Christoph Schwab, and Radu~Alexandru Todor.
\newblock Finite elements for elliptic problems with stochastic coefficients.
\newblock {\em Computer methods in applied mechanics and engineering},
  194(2):205--228, 2005.

\bibitem{Gr15}
I.~G. Graham, F.~Y. Kuo, J.~A. Nichols, R.~Scheichl, Ch. Schwab, and I.~H.
  Sloan.
\newblock Quasi-{M}onte {C}arlo finite element methods for elliptic {PDE}s with
  lognormal random coefficients.
\newblock {\em Numerische Mathematik}, 131(2):329--368, Oct 2015.

\bibitem{Gr18}
Ivan~G Graham, Frances~Y Kuo, Dirk Nuyens, Rob Scheichl, and Ian~H Sloan.
\newblock Circulant embedding with {QMC}: analysis for elliptic {PDE} with
  lognormal coefficients.
\newblock {\em Numerische Mathematik}, pages 1--33, 2018.

\bibitem{GN13}
D.~S. Grebenkov and B.-T. Nguyen.
\newblock Geometrical structure of {L}aplacian eigenfunctions.
\newblock {\em SIAM Review}, 55(4):601--667, 2013.

\bibitem{H10}
Wolfgang Hackbusch.
\newblock {\em Elliptic {D}ifferential {E}quations: {T}heory and {N}umerical
  {T}reatment}.
\newblock Springer, Berlin; Heidelberg [u.a.], 2. softcover print. edition,
  2017.

\bibitem{HL13}
Helmut Harbrecht and Jingzhi Li.
\newblock First order second moment analysis for stochastic interface problems
  based on low-rank approximation.
\newblock {\em ESAIM: Mathematical Modelling and Num. Analysis},
  47(5):1533--1552, 2013.

\bibitem{HS13}
Viet~Ha Hoang and Christoph Schwab.
\newblock Sparse tensor galerkin discretization of parametric and random
  parabolic {PDEs}---analytic regularity and generalized polynomial chaos
  approximation.
\newblock {\em SIAM Journal on Mathematical Analysis}, 45(5):3050--3083, 2013.

\bibitem{KY18}
Ralf Kornhuber and Evgenia Youett.
\newblock Adaptive multilevel monte carlo methods for stochastic variational
  inequalities.
\newblock {\em SIAM Journal on Numerical Analysis}, 56(4):1987--2007, 2018.

\bibitem{LP11}
Annika Lang and J{\"u}rgen Potthoff.
\newblock Fast simulation of {G}aussian random fields.
\newblock {\em Monte Carlo Methods and Applications}, 17(3):195--214, 2011.

\bibitem{L15}
Christapher Lang, Ashesh Sharma, Alireza Doostan, and Kurt Maute.
\newblock Heaviside enriched extended stochastic {FEM} for problems with
  uncertain material interfaces.
\newblock {\em Computational Mechanics}, 56(5):753--767, 2015.

\bibitem{LWZ16}
Jingshi Li, Xiaoshen Wang, and Kai Zhang.
\newblock Multi-level {M}onte {C}arlo weak {G}alerkin method for elliptic
  equations with stochastic jump coefficients.
\newblock {\em Applied Mathematics and Computation}, 275:181--194, 2016.

\bibitem{MS15}
Fabian~L M{\"u}ller and Christoph Schwab.
\newblock Finite elements with mesh refinement for wave equations in polygons.
\newblock {\em Journal of computational and applied mathematics}, 283:163--181,
  2015.

\bibitem{N90}
Serge Nicaise.
\newblock Polygonal interface problems: higher regularity results.
\newblock {\em Communications in Partial Differential Equations},
  15(10):1475--1508, 1990.

\bibitem{NS94a}
Serge Nicaise and Anna-Margarete S{\"a}ndig.
\newblock General interface problems—{I}.
\newblock {\em Mathematical Methods in the Applied Sciences}, 17(6):395--429,
  1994.

\bibitem{NS94b}
Serge Nicaise and Anna-Margerete S{\"a}ndig.
\newblock General interface problems—{II}.
\newblock {\em Mathematical Methods in the Applied Sciences}, 17(6):431--450,
  1994.

\bibitem{NTW08a}
F.~Nobile, R.~Tempone, and C.~G. Webster.
\newblock A sparse grid stochastic collocation method for partial differential
  equations with random input data.
\newblock {\em SIAM J. Numer. Anal.}, 46(5):2309--2345, 2008.

\bibitem{NT09}
Fabio Nobile and Raul Tempone.
\newblock Analysis and implementation issues for the numerical approximation of
  parabolic equations with random coefficients.
\newblock {\em Int. J. for Num. Methods in Engineering}, 80(6-7):979--1006,
  2009.

\bibitem{P01}
Martin Petzoldt.
\newblock Regularity results for {L}aplace interface problems in two
  dimensions.
\newblock {\em Zeitschrift f{\"u}r Analysis und ihre Anwendungen},
  20(2):431--455, 2001.

\bibitem{QV97}
Alfio Quarteroni and Alberto Valli.
\newblock {\em Numerical Approximation of Partial Differential Equations}.
\newblock Springer Science \& Business Media, 2 edition, 1997.

\bibitem{S99}
Ch~Schwab.
\newblock {\em P-and Hp-Finite Element Methods: Theory and Applications in
  Solid and Fluid Mechanics (Numerical Mathematics and Scientific
  Computation)}.
\newblock Oxford University Press, New York, 1999.

\bibitem{SG11}
Christoph Schwab and Claude~Jeffrey Gittelson.
\newblock Sparse tensor discretizations of high-dimensional parametric and
  stochastic {PDE}s.
\newblock {\em Acta Numerica}, 20:291--467, 2011.

\bibitem{SP07}
George Stefanou and Manolis Papadrakakis.
\newblock Assessment of spectral representation and {K}arhunen--{L}o{\`e}ve
  expansion methods for the simulation of {G}aussian stochastic fields.
\newblock {\em Computer methods in applied mechanics and engineering},
  196(21-24):2465--2477, 2007.

\bibitem{G13}
A.~L. Teckentrup, R.~Scheichl, M.~B. Giles, and E.~Ullmann.
\newblock {Further analysis of multilevel {M}onte {C}arlo methods for elliptic
  {PDE}s with random coefficients}.
\newblock {\em Numerische Mathematik}, 125(3):569--600, 2013.

\bibitem{GZ12}
Guannan Zhang and Max Gunzburger.
\newblock Error analysis of a stochastic collocation method for parabolic
  partial differential equations with random input data.
\newblock {\em SIAM Journal on Num. Analysis}, 50(4):1922--1940, 2012.

\bibitem{Z11}
Tao Zhou.
\newblock Stochastic galerkin methods for elliptic interface problems with
  random input.
\newblock {\em Journal of Computational and Applied Mathematics},
  236(5):782--792, 2011.

\end{thebibliography}

\end{document}